\documentclass[11pt,bull-l]{amsart}
\usepackage[top = 0.9in, bottom = 0.9in, left =1in, right = 1in]{geometry}
\usepackage{geometry}
\usepackage[utf8]{inputenc}
\usepackage{hyperref}
\usepackage{listings}
\usepackage{multimedia} 
\usepackage{graphicx}
\usepackage{comment}
\usepackage[english]{babel}
\usepackage{amsfonts,amscd,amssymb,amsmath,mathrsfs}
\usepackage{wrapfig}
\usepackage{multirow}
\usepackage{verbatim,cite}
\usepackage{float}
\usepackage{cancel}
\usepackage{caption}
\usepackage{subcaption}
\usepackage{mathdots}
\usepackage{bbm}
\usepackage{tikz}
\usepackage{paper-defs}
\usepackage{xcolor}

\title[Computing the inverse spectral map for measures supported on disjoint intervals]{A Riemann--Hilbert approach to computing the inverse spectral map for measures supported on disjoint intervals}

\author{Cade Ballew}
\address{University of Washington, Seattle, WA}
\email{ballew@uw.edu}

\author{Thomas Trogdon}
\address{University of Washington, Seattle, WA}
\email{trogdon@uw.edu}

\date{}
\thanks{This material is based on work supported by the National Science Foundation under Grant No. DMS-1945652 (TT).
Any opinions, findings, and conclusions or recommendations expressed in this material are those of the authors and do not necessarily reflect the views of the National Science Foundation.}

\begin{document}

\maketitle

\begin{abstract}
We develop a numerical method for computing with orthogonal polynomials that are orthogonal on multiple, disjoint intervals for which analytical formulae are currently unknown. Our approach exploits the Fokas--Its--Kitaev Riemann--Hilbert representation of the orthogonal polynomials to produce an $\OO(N)$ method to compute the first $N$ recurrence coefficients. The method can also be used for pointwise evaluation of the polynomials and their Cauchy transforms throughout the complex plane. The method encodes the singularity behavior of weight functions using weighted Cauchy integrals of Chebyshev polynomials. This greatly improves the efficiency of the method, outperforming other available techniques.  We demonstrate the fast convergence of our method and present applications to integrable systems and approximation theory.
\end{abstract}

\section{Introduction}
Orthogonal polynomials are fundamental objects of study in computational mathematics with numerous applications ranging from quadrature \cite{Golub1969} and eigenvalue approximation \cite{Lanczos1950} to differential equations \cite{Olver2013a} and random matrix theory \cite{deift_2000}, and beyond \cite{DEIFT1985358, gautschi, olver_slevinsky_townsend_2020, Szeg1939}. The theory of orthogonal polynomials dates back to the work of Chebyshev, Jacobi, Lagrange, and others. Much of this theory is summarized in the book of Szeg\H{o} \cite{Szeg1939}. The modern theory of orthogonal polynomials has been intertwined with the theory of Riemann--Hilbert problems since the discovery of the Fokas, Its, and Kitaev (FIK) Riemann--Hilbert problem \cite{Fokas1992}. This characterization of orthogonal polynomials in terms of a solution of a Riemann--Hilbert problem, combined with the Deift--Zhou method of nonlinear steepest descent \cite{deift_zhou}, has significantly advanced the asymptotic knowledge of orthogonal polynomials. Much of this advancement is presented in the book of Deift \cite{deift_2000} which is fundamental to our work (see also \cite{Kuijlaars2003}).

Orthogonal polynomials are arguably best utilized via the coefficients that make up their three-term recurrence. These recurrence coefficients, in a sense, fully characterize their orthogonal polynomials, allowing one to compute their values, their zeroes, and their normalization coefficients \cite{gautschi}. Thus, orthogonal polynomials that are otherwise not characterized may be utilized numerically if their recurrence coefficients can be reliably computed. The Jacobi matrix of three-term recurrence coefficients has the orthogonality measure as its spectral measure \cite{deift_2000}, and so computing the three-term recurrence coefficients, given the measure, is often referred to as the computing the inverse spectral map.

Historically, given an orthogonality weight, the orthogonal polynomial recurrence coefficients have been computed via a discretization of the Gram--Schmidt procedure known as the Stieltjes procedure \cite{gautschi}. While reliable, this method, in a different setting, has been realized to require $\OO(N^3)$ arithmetic operations to compute the first $N$ recurrence coefficients \cite{tomjacobi}, leaving significant room for speedup through a Riemann--Hilbert approach. An $\OO(N^2)$ algorithm based on \cite{Gragg1984} is presented in \cite{gautschi} but is not without its own limitations. Obtaining numerical solutions to the FIK Riemann--Hilbert problem is a relatively recent area of research, and such methods are primarily based on techniques developed by Olver \cite{Olver2012}. These techniques were applied in \cite{tomjacobi} to compute recurrence coefficients for orthogonal polynomials with exponential weights. This work demonstrated that a Riemann--Hilbert approach requires only $\OO(N)$ arithmetic operations to compute the first $N$ recurrence coefficients. Furthermore, this approach allows such coefficients to be computed individually in $\OO(1)$ arithmetic operations so that methods based on this approach are optimal in a time complexity sense. It is also possible that an increasingly analytical approach could be employed \cite{Huybrechs2016} to accomplish our same goal, and we leave this for future work, but the complication here is that asymptotic formulae are given in terms of theta functions \cite{Ding2022,YATTSELEV201573} which are themselves non-trivial to compute \cite{Deconinck2004}.

In this paper, we develop and apply a numerical Riemann--Hilbert approach to the problem presented in \cite{KUIJLAARS2004337} and further extended in \cite{Ding2022} to compute the recurrence coefficients for Chebyshev-like polynomials with support on multiple, disjoint intervals on the real line. Such orthogonal polynomials have both analytical and computational applications \cite{Geronimo1988, Aptekarev1986, Saad, deBoor}. For example, such orthogonal polynomials produce a new class of iterative solvers \cite{Saad}, improve matrix spectrum approximation when bounds on the eigenvalues are known a priori \cite{Chen}, and describe the numerical evolution of a Toda lattice \cite{DEIFT1985358}. Analytical formulae are not known for most of the cases we consider. The method remains accurate as the degree of the polynomial under consideration grows, and it bypasses the need to compute theta functions. More specifically, we compute orthogonal polynomial recurrence coefficients for weight functions of the form 
\begin{equation}\label{weight}
w(x)=\sum_{j=1}^{g+1}\mathbbm{1}_{[a_j,b_j]}(x)h_j(x)\left(\sqrt{x-a_j}\right)^{\alpha_j}\left(\sqrt{b_j-x}\right)^{\beta_j},\quad \alpha_j,\beta_j\in\{-1,1\},
\end{equation}
for a set of disjoint intervals, $\bigcup_{j=1}^{g+1}[a_j,b_j]$ where each $h_j$ is positive on $[a_j,b_j]$ and analytic in a neighborhood $\Omega_j$ of $[a_j,b_j]$. We include a plot of a weight function of this form in Figure \ref{weightfunc}.
\begin{figure}
\includegraphics[scale=0.5]{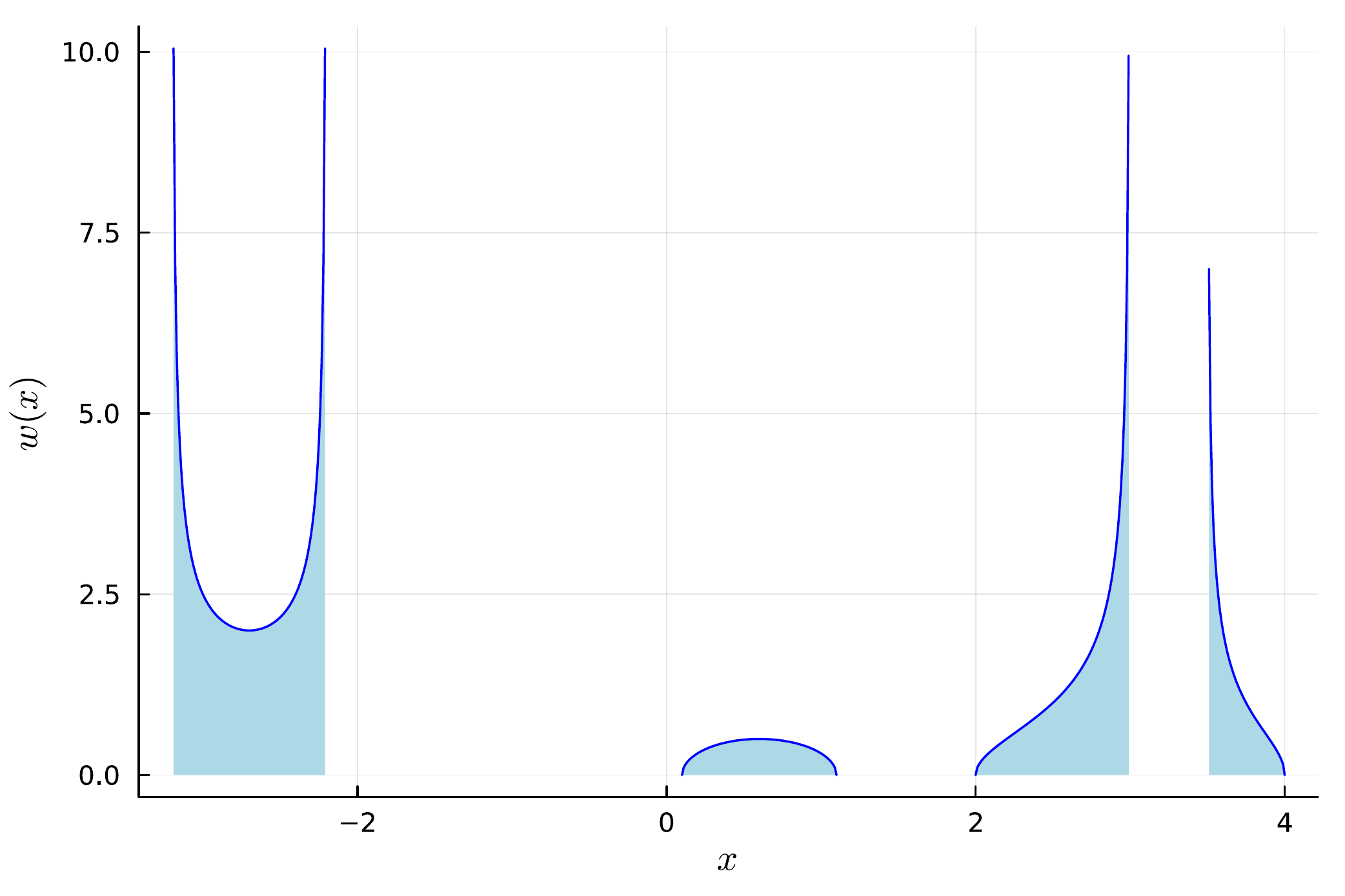}
\caption{A weight function of the form \eqref{weight} with support on $[-3.2, -2.2]\cup[0.1, 1.1]\cup[2, 3]\cup[3.5, 4]$ with $\alpha_1=\beta_1=-1$ (Chebyshev-$\TT$), $\alpha_2=\beta_2=1$ (Chebyshev-$U$), $\alpha_3=1,\beta_3=-1$ (Chebyshev-$V$), $\alpha_4=-1,\beta_4=1$ (Chebyshev-$W$).}
\label{weightfunc}
\end{figure}
In most cases, it will suffice to consider $h_j(x)=1$ for all $j$, and recent work \cite{Gutleb2023} has demonstrated that recurrence coefficients corresponding to a different choice of $h_j$ can be obtained from those corresponding to $h_j(x)=1$.  In principle, our weight function \eqref{weightfunc} could be generalized to a Jacobi-like weight by allowing for any $\alpha_j,\beta_j>-2$; however this would complicate our numerics significantly, as it would result in a Riemann--Hilbert problem defined on self-intersecting contours. Furthermore, the asymptotics are different if $\alpha_j,\beta_j$ are not integers, with error terms behaving polynomially rather than exponentially \cite{KUIJLAARS2004337}. Thus, while our method could, in principle, be extended to an $\OO(N)$ method for Jacobi-like weights, this method would likely have a much larger leading constant than the method that we consider.

In Section \ref{OPCI}, we introduce notation for orthogonal polynomials and Cauchy integrals. In Section \ref{theory}, we apply a portion of the method of nonlinear steepest descent to the FIK Riemann--Hilbert problem with weight \eqref{weight} and introduce an additional transformation so that a numerical solution can be computed. In Section \ref{numerics}, we develop numerical methods to solve the FIK Riemann--Hilbert problem and recover recurrence coefficients. These methods encode the singularity behavior of the weight \eqref{weight}, enabling much faster computations than the methods found in \cite{tomjacobi}. For example, our method computes the first 51 recurrence coefficients for a Chebyshev-$\TT$-like weight ($\alpha_j=\beta_j=-1$) on the domain $[-1.8,-1]\cup[2,3]$ to machine precision in 0.47 seconds\footnote{All computations in this paper are performed on a Lenovo laptop running Ubuntu version 20.04 with 8 cores and 16 GB of RAM with an Intel\textregistered{} Core\texttrademark{} i7-11800H processor running at 2.30 GHz.}. A further timing comparison with an optimized $\OO(N^2)$ method \cite{Gragg1984} can be found in Section \ref{sect:multi}. In Section \ref{examp}, we demonstrate the numerical accuracy of our method and apply our method to evolving a Toda lattice and approximating $1/x$ on a disconnected domain. Code used to generate the plots in this paper can be found at \cite{ballew_trogdon_2023}.

Our method could also potentially be extended to compute the associated $N$-point Gauss quadrature nodes and weights for \eqref{weight} in $\OO(N)$ arithmetic operations using techniques similar to \cite{Hale2012,Townsend2014,Bogaert2014,Huybrechs2016}.  Many extra complexities are introduced for \eqref{weight}, including the possibility of orthogonal polynomials having roots outside the support of the weight.  For this approach to have any advantage over using the individual rules for each subinterval $[a_j,b_j]$, it is likely that $g$ will need to be large.

\section{Orthogonal polynomials and Cauchy integrals}\label{OPCI}
\subsection{Orthogonal polynomials}
For our purposes, a weight function $w$ is a nonnegative function defined on $\Sigma\subset\real$ that is continuous and positive on the interior of $\Sigma$. Consider a sequence of univariate monic polynomials $\pi_0(x),\pi_1(x),\pi_2(x),\ldots$ such that $\pi_j$ has degree $j$ for all $j\in\mathbb{N}$. These polynomials are said to be orthogonal with respect to a weight function $w$ if
\[
\int_{\Sigma}\pi_j(x)\pi_k(x)w(x)\df x=h_j\delta_{jk},
\]
where $h_j>0$ and $\delta_{jk}$ is the Kronecker delta.

The orthonormal polynomials $p_0(x),p_1(x),p_2(x),\ldots$ are defined such that
\[
p_j(x)=\gamma_j\pi_j(x),
\]
for all $j\in\mathbb{N}$ where $\gamma_j=\frac{1}{\sqrt{h_j}}$. Clearly, the orthonormal polynomials satisfy
\[
\int_{\Sigma}p_j(x)p_k(x)w(x)\df x=\delta_{jk}.
\]
Additionally, the orthonormal polynomials satisfy a symmetric three-term recurrence
\begin{equation}\label{recurr}
\begin{aligned}
	&xp_0(x)=a_0p_0(x)+b_0p_1(x),\\
	&xp_k(x)=b_{k-1}p_{k-1}(x)+a_kp_k(x)+b_kp_{k+1}(x),\quad k\geq1,
\end{aligned}
\end{equation}
where $b_k>0$ for all $k$. The Jacobi operator associated with a sequence of orthonormal polynomials is defined as the infinite tridiagonal matrix
\[
\bJ(w)=\begin{pmatrix}
	a_0 &b_0\\
	b_0 &a_1 &b_1\\
	&b_1 &a_2 &b_2\\
	&&\ddots &\ddots &\ddots
\end{pmatrix}.
\]
If we let 
\[
\bp(x)=\begin{pmatrix}
	p_0(x) &p_1(x) &p_2(x) &\cdots
\end{pmatrix},
\]
the three-term recurrence can be expressed as
\[
\bp(x)^Tx=\bJ(w)\bp(x)^T.
\]
Chebyshev polynomials defined on $\mathbb{I}=[-1,1]$ will be particularly useful.  A general reference is  \cite{Mason2002}. Normalized Chebyshev polynomials of the first kind are denoted by $\TT_k(x)$ and have weight function 
\[
w_\TT(x)=\frac{1}{\pi\sqrt{1-x}\sqrt{x+1}},
\]
and Jacobi operator
\[
\bJ(w_\TT)=\begin{pmatrix}
	0 &1/\sqrt{2}\\
	1/\sqrt{2} &0 &1/2\\
	&1/2 &0 &1/2\\
	&&\ddots &\ddots &\ddots
\end{pmatrix}.
\]
We will always use the square root symbol to denote the principal branch defined on $\compl\setminus(-\infty,0)$. Thus, $w_\TT$ is analytic on $\compl\setminus\left(\real\setminus(-1,1)\right).$ Note that our convention relates to the classical Chebyshev-$T$ polynomials by $\TT_0(x)=T_0(x)$, and $\TT_j(x)=\sqrt{2}T_j(x)$ for $j>0$.

Chebyshev polynomials of the second kind on $\mathbb{I}$ are denoted by $U_k(x)$ and have weight function 
\[
w_U(x)=\frac{2}{\pi}\sqrt{1-x}\sqrt{x+1},
\]
and Jacobi operator
\[
\bJ(w_U)=\begin{pmatrix}
	0 &1/2\\
	1/2 &0 &1/2\\
	&1/2 &0 &1/2\\
	&&\ddots &\ddots &\ddots
\end{pmatrix}.
\]

Chebyshev polynomials of the third kind on $\mathbb{I}$ are denoted by $V_k(x)$ and have weight function 
\[
w_V(x)=\frac{1}{\pi}\frac{\sqrt{x+1}}{\sqrt{1-x}},
\]
and Jacobi operator
\[
\bJ(w_V)=\begin{pmatrix}
	1/2 &1/2\\
	1/2 &0 &1/2\\
	&1/2 &0 &1/2\\
	&&\ddots &\ddots &\ddots
\end{pmatrix}.
\]

Chebyshev polynomials of the fourth kind on $\mathbb{I}$ are denoted by $W_k(x)$ and have weight function
\[
w_W(x)=\frac{1}{\pi}\frac{\sqrt{1-x}}{\sqrt{x+1}},
\]
and Jacobi operator
\[
\bJ(w_W)=\begin{pmatrix}
	-1/2 &1/2\\
	1/2 &0 &1/2\\
	&1/2 &0 &1/2\\
	&&\ddots &\ddots &\ddots
\end{pmatrix}.
\]

\subsection{Cauchy integrals}
Given a curve $\Gamma\subset\mathbb{C}$ and a function $f:\Gamma\to\mathbb{C}$, the Cauchy transform $\mathcal{C}_\Gamma$ is as an operator that maps $f$ to its Cauchy integral, i.e.,
\[
\mathcal{C}_\Gamma f(z)=\frac{1}{2\pi \im}\int_\Gamma\frac{f(s)}{s-z}\df s.
\]
If $\Gamma$ is a smooth, non-self-intersecting, oriented contour, then to each interior point $z\in\Gamma$, we associate a ball $B=B(z)$ such that $B\setminus\Gamma$ has two connected components $B_\pm$ where $B_+$ ($B_-$) is to the left (right) with respect to the orientation of $\Gamma$. For $F:\compl\setminus\Gamma\to\compl$, define the (non-tangential) boundary values by 
\[
F^\pm(z)=\lim_{\substack{z'\to z,\\z\in B_\pm}}F(z'),
\]
where this limit exists. We define the notation for the Cauchy operators
\[
\mathcal{C}_\Gamma^\pm f(z)=\lim_{\substack{z'\to z,\\z\in B_\pm}}\mathcal{C}_\Gamma f(z').
\]
Cauchy integrals of orthogonal polynomials will also be of use in this work. For orthonormal polynomials $p_k(x)$ associated with weight $w$ on $\Sigma\subset\real$, define $$C_k(z)=\mathcal{C}_\Sigma[p_kw](z)=\frac{1}{2\pi \im}\int_\Sigma\frac{p_k(s)w(s)}{s-z}\df s,$$ for $z\in\mathbb{C}\setminus \Sigma$.
It is well-known that if the three-term recurrence for $p_k$ is given by \eqref{recurr}, these Cauchy integrals also satisfy the three-term recurrence \cite{olver_slevinsky_townsend_2020}:
\begin{equation}\label{intrecurr}
	\begin{aligned}
		&zC_0(z)=a_0C_0(z)+b_0C_1(z)-\frac{1}{2\pi \im}\int_\Sigma w(x)\df x,\\
		&zC_k(z)=b_{k-1}C_{k-1}(z)+a_kC_k(z)+b_kC_{k+1}(z),\quad k\geq1.
	\end{aligned}
\end{equation}

\section{The Riemann--Hilbert problem}\label{theory}
\subsection{Setting up the Riemann--Hilbert Problem}
The original formulation of orthogonal polynomials as a matrix Riemann--Hilbert problem is due to Fokas, Its, and Kitaev and is for orthogonal polynomials with $\Sigma=\real$ \cite{Fokas1992}. Let $\pi_0,\pi_1,\ldots$ be the sequence of monic orthogonal polynomials for weight function $w$. Consider the Riemann--Hilbert problem for the unknown $\bY_n:\compl\setminus\real\to\compl^{2\times2}$
\begin{equation}\label{FIK}
\begin{aligned}
	&\bY_n~\text{is analytic in}~\compl\setminus\real,\\
	&\bY_n^+(z)=\bY_n^-(z)\begin{pmatrix}
		1 &w(z)\\0 &1
	\end{pmatrix},\quad z\in\real,\\
	&\bY_n(z)=\left(\bI+\OO(z^{-1})\right)\begin{pmatrix}
		z^n &0\\0&z^{-n}
	\end{pmatrix},\quad z\to\infty.
\end{aligned} 
\end{equation}
This problem has a unique solution that is given by \cite{deift_2000}
\begin{equation}\label{exactsol}
	\bY_n(z)=\begin{pmatrix}
		\pi_n(z)&\mathcal{C}_\Sigma[\pi_nw](z)\\
		-2\pi \im\gamma^2_{n-1}\pi_{n-1}(z) &-2\pi \im\gamma^2_{n-1}\mathcal{C}_\Sigma[\pi_{n-1}w](z)
	\end{pmatrix}.
\end{equation}
Then, \eqref{FIK} can be modified for orthogonal polynomials defined on a bounded set $\Sigma\subset\real$ by including asymptotic conditions at the boundary of $\Sigma$ \cite{KUIJLAARS2004337}. We specifically consider orthogonal polynomials for a weight function \eqref{weight} as discussed above.
Recall that $\alpha_j,\beta_j$ determine which Chebyshev weight function variant $w$ behaves like on $[a_j,b_j]$.

The Riemann--Hilbert problem for this weight function is given by
\begin{equation}\label{RHPOG}
	\begin{aligned}
		&\bY_n~\text{is analytic in}~\compl\setminus \Sigma,\\
		&\bY_n^+(z)=\bY_n^-(z)\begin{pmatrix}
			1 &w(z)\\0 &1
		\end{pmatrix},\quad z\in \Sigma,\\
		&\bY_n(z)=\left(\bI+\OO(z^{-1})\right)\begin{pmatrix}
			z^n &0\\0&z^{-n}
		\end{pmatrix},\quad z\to\infty,\\
	&\bY_n(z)=\OO\begin{pmatrix}
		1 &1+|z-a_j|^{\alpha_j/2}\\
		1 &1+|z-a_j|^{\alpha_j/2}
	\end{pmatrix},\quad z\to a_j,\\
	&\bY_n(z)=\OO\begin{pmatrix}
		1 &1+|z-b_j|^{\beta_j/2}\\
		1 &1+|z-b_j|^{\beta_j/2}
	\end{pmatrix},\quad z\to b_j,
	\end{aligned} 
\end{equation}
where the two asymptotic statements are taken element-wise. The unique solution is still given by \eqref{exactsol}. For computational ease, we do not require the weight function $w$ to be normalized.
\subsection{Transforming the Riemann--Hilbert problem}
We transform this problem into one that can be solved numerically through the following procedure:
\begin{itemize}
	\item{\underline{Step 1}: Lens the Riemann--Hilbert problem, factoring and moving jump conditions to regions where we will induce exponential decay.}
	\item{\underline{Step 2}: Determine and apply a differential to remove singularities at infinity.}
	\item{\underline{Step 3}: Determine and apply an auxiliary function to remove the jump conditions on the gaps induced by the differential, and ``solve'' the resulting problem.}
\end{itemize}
This procedure is performed in full for orthogonal polynomials with support on a single interval in \cite{KUIJLAARS2004337,Kuijlaars2003} and for multiple intervals in \cite{Ding2022,YATTSELEV201573}; the novelty of our approach lies in how Step 3 is applied and the fact that ``solve'' means constructing a reliable numerical approximation, not an asymptotic approximation. 
\subsubsection{Step 1: Lensing}
Consider
\begin{equation*}
	w_j(x)=h_j(x)\left(\sqrt{x-a_j}\right)^{\alpha_j}\left(\sqrt{b_j-x}\right)^{\beta_j}.
\end{equation*}
This function is analytic on $\Omega_j\setminus(\real\setminus(a_j,b_j))$. Let $C_j$ be a counterclockwise-oriented curve lying in $\Omega_j$ that encircles, but does not intersect $[a_j,b_j]$, and denote its interior by $D_j$. In the end, our numerical method will take $C_j$ to be a circle, imposing additional restrictions on $\Omega_j$. We include examples of these regions in Figure \ref{regions}.
\begin{figure}
	\resizebox{\textwidth}{!}{%
		\begin{tikzpicture}
			\fill[gray!50] (-6.5,0) circle (2.5 and 2.1) ;
			\node[anchor=south] at (-6.5,0.5) {$D_1$};
			\node[anchor=east] at (-8.4,0) {$C_1$};
			\draw[->] (-8.8,0.25) arc (180:80:0.3);
			\node[anchor=south] at (-6.5,2) {$\Omega_1$};
			\fill[gray!50] (0,0) circle (3.5 and 2.6);
			\node[anchor=south] at (0,0.5) {$D_2$};
			\node[anchor=east] at (-2.5,0) {$C_2$};
			\draw[->] (-2.9,0.25) arc (180:80:0.3 );
			\node[anchor=south] at (0,2.5) {$\Omega_2$};
			\fill[gray!50] (7,0) circle (2 and 1.6);
			\node[anchor=south] at (7,0.25) {$D_3$};
			\node[anchor=east] at (5.6,0) {$C_3$};
			\draw[->] (5.2,0.25) arc (180:80:0.3 );
			\node[anchor=south] at (7,1.5) {$\Omega_3$};
			\draw (-8,0) -- (-5,0);
			\draw (-2,0) -- (2,0);
			\draw (6,0) -- (8,0);
			\filldraw[black] (-8,0) circle (1pt) node[anchor=north]{$a_1$};
			\filldraw[black] (-5,0) circle (1pt) node[anchor=north]{$b_1$};
			\filldraw[black] (-2,0) circle (1pt) node[anchor=north]{$a_2$};
			\filldraw[black] (2,0) circle (1pt) node[anchor=north]{$b_2$};
			\filldraw[black] (6,0) circle (1pt) node[anchor=north]{$a_3$};
			\filldraw[black] (8,0) circle (1pt) node[anchor=north]{$b_3$};
			\draw (-6.5,0) circle (2);
			\draw (0,0) circle (2.5);
			\draw (7,0) circle (1.5);
		\end{tikzpicture}
	}%
\caption{Sample regions $\Omega_j$ and $D_j$ and curves $C_j$ for a three interval case.}
\label{regions}
\end{figure}
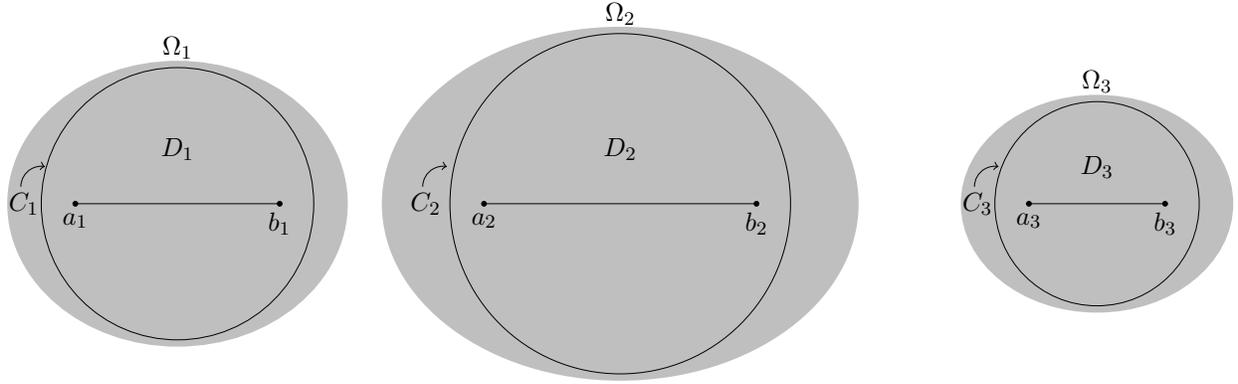
Define the transformation
\begin{equation*}
\bZ_n(z)=\begin{cases}
	\bY_n(z)\begin{pmatrix}
		1 &0\\-1/w_j(z) &1
	\end{pmatrix}  &z\in D_j\cap\compl^+,\\[15 pt]
\bY_n(z)\begin{pmatrix}
	1 &0\\1/w_j(z) &1
\end{pmatrix} &z\in D_j\cap\compl^-,\\
\bY_n(z) &\text{otherwise},
\end{cases}
\end{equation*}
for $j=1,\ldots,g+1$. It follows that for $z \in C_j\cap\compl^+$,
\begin{equation*}
\bZ_n^+(z)=\bZ_n^-(z)\begin{pmatrix}
	1 &0\\-1/w_j(z) &1
\end{pmatrix},
\end{equation*}
and for $z \in C_j\cap\compl^-$, 
\begin{equation*}
	\bZ_n^+(z)=\bZ_n^-(z)\begin{pmatrix}
		1 &0\\1/w_j(z) &1
	\end{pmatrix}.
\end{equation*}
For $z \in (a_j,b_j)$, 
\begin{equation*}
\begin{aligned}
\bZ_n^+(z)&=\bY_n^+(z)\begin{pmatrix}
	1 &0\\-1/w_j(z) &1
\end{pmatrix}=\bY_n^-(z)\begin{pmatrix}
1 &w_j(z)\\0 &1
\end{pmatrix}\begin{pmatrix}
1 &0\\-1/w_j(z) &1
\end{pmatrix}\\&=
\bZ_n^-(z)\begin{pmatrix}
1 &0\\-1/w_j(z) &1
\end{pmatrix}\begin{pmatrix}
1 &w_j(z)\\0 &1
\end{pmatrix}\begin{pmatrix}
1 &0\\-1/w_j(z) &1
\end{pmatrix}=\bZ_n^-(z)\begin{pmatrix}
0 &w_j(z)\\-1/w_j(z) &0
\end{pmatrix}.
\end{aligned}
\end{equation*}
Now, we compute the jump condition on $D_j\cap(\real\setminus(a_j,b_j))$. For $x>b_j$ or $x<a_j$, $w_j^+(x)=-w_j^-(x)$ where now the $+$ denotes a limit from the upper half plane, and $-$ denotes a limit from the lower. Then, 
\begin{equation*}
\begin{aligned}
	\bZ_n^+(z)&=\bY_n^+(z)\begin{pmatrix}
		1 &0\\-1/w^+_j(z) &1
	\end{pmatrix}=\bY_n^-(z)\begin{pmatrix}
		1 &0\\-1/w^+_j(z) &1
	\end{pmatrix}\\&=
	\bZ_n^-(z)\begin{pmatrix}
		1 &0\\-1/w^+_j(z) &1
	\end{pmatrix}\begin{pmatrix}
		1 &0\\-1/w^-_j(z) &1
	\end{pmatrix}=\bZ_n^-(z)\begin{pmatrix}
		1 &0\\-\left(\frac{1}{w^+_j(z)}+\frac{1}{w^-_j(z)}\right) &1
	\end{pmatrix}=\bZ_n^-(z).
\end{aligned}
\end{equation*} 
We find that there is no additional jump condition induced between $[a_j,b_j]$ and $C_j$. This is a special situation in which no local parametrices are required to compute the asymptotics of the polynomials. The asymptotic behavior at the endpoints is multiplied by $\pm 1/w_j(z)$ in this transformation, and the lensed Riemann--Hilbert problem is given by
\begin{equation}\label{RHPlensed}
	\begin{aligned}
		&\bZ_n~\text{is analytic in}~\compl\setminus \left(\bigcup_{j=1}^{g+1}C_j\cup[a_j,b_j]\right),\\
		&\bZ_n^+(z)=\bZ_n^-(z)\begin{pmatrix}
			1 &0\\-1/w_j(z) &1
		\end{pmatrix},\quad z\in C_j\cap\compl^+,\\\
		&\bZ_n^+(z)=\bZ_n^-(z)\begin{pmatrix}
			1 &0\\1/w_j(z) &1
		\end{pmatrix},\quad z\in 	C_j\cap\compl^-,\\
		&\bZ_n^+(z)=\bZ_n^-(z)\begin{pmatrix}
			0 &w_j(z)\\-1/w_j(z) &0
		\end{pmatrix},\quad z\in(a_j,b_j),\\
		&\bZ_n(z)=\left(\bI+\OO(z^{-1})\right)\begin{pmatrix}
			z^n &0\\0&z^{-n}
		\end{pmatrix},\quad z\to\infty,\\
		&\bZ_n(z)=\OO\begin{pmatrix}
		1+|z-a_j|^{-\alpha_j/2} &1+|z-a_j|^{\alpha_j/2}\\
		1+|z-a_j|^{-\alpha_j/2} &1+|z-a_j|^{\alpha_j/2}
		\end{pmatrix},\quad z\to a_j,\\
		&\bZ_n(z)=\OO\begin{pmatrix}
			1+|z-b_j|^{-\beta_j/2} &1+|z-b_j|^{\beta_j/2}\\
			1+|z-b_j|^{-\beta_j/2} &1+|z-b_j|^{\beta_j/2}
		\end{pmatrix},\quad z\to b_j.
	\end{aligned} 
\end{equation}
\subsubsection{Step 2: The differential $\mathfrak g'$} To remove the growth/decay at infinity,
we seek a function $\mathfrak g$ that satisfies the following:
\begin{itemize}
	\item{$\mathfrak g'(z)=z^{-1}+\OO(z^{-2})$ as $z\to\infty$}.
	\item{$\mathfrak g'^+(z),\mathfrak g'^-(z)$ are purely imaginary on $[a_j,b_j]$ for $j=1,\ldots,g+1$.}
	\item{$\int_{b_j}^{a_{j+1}}\mathfrak g'(z)\df z=0$ for $j=1,\ldots,g$.}
\end{itemize}
This is referred to as the exterior Green's function with pole at infinity. We achieve this by defining 
\begin{equation*}
	\mathfrak{g}'(z)=\frac{Q_g(z)}{R(z)},\quad\text{where}~R(z)^2=\prod_{j=1}^{g+1}(z-a_j)(z-b_j).
\end{equation*}
Here, $Q_g$ is a monic polynomial of degree $g$
\begin{equation*}
	Q_g(z)=z^g+\sum_{k=0}^{g-1}h_kz^k,
\end{equation*}
whose coefficients $h_k$ are determined by the linear system 
\begin{equation}\label{gsys}
	\int_{b_j}^{a_{j+1}}\sum_{k=0}^{g-1}h_k\frac{z^k}{R(z)}\df z=-\int_{b_j}^{a_{j+1}}\frac{z^g}{R(z)}\df z,
\end{equation}
for $j=1,\ldots,g$. The unique solvability of this system follows from the fact that $\frac{z^k}{R(z)}$ for $k=0,1,\ldots,g-1$ form a basis for holomorphic differentials on the hyperelliptic Riemann surface defined by $w^2=R(z)^2$. Indeed, the coefficient matrix here is $-\frac{1}{2}\Tilde\bA$ where $\Tilde \bA$ is invertible as in Lemma \ref{invert} below. Then, we define  $\mathfrak{g}$ by normalizing at the leftmost endpoint. If some function satisfies $\psi'=\mathfrak{g}'$, then we let 
\begin{equation*}
	\mathfrak{g}(z)=\psi(z)-\psi(a_1).
\end{equation*}
We also define
\begin{equation}\label{delta}
	\Delta_j=\mathfrak{g}^+(z)-\mathfrak{g}^-(z)=2\sum_{k=1}^{j}\int_{a_k}^{b_k}(\mathfrak{g}')^+(z)\df z\in\im\real,
\end{equation}
for $z\in(b_j,a_{j+1})$, $j=1,\ldots,g$, where this value is independent of $z$. We have that
\begin{equation*}
	\mathfrak{g}^+(z)+\mathfrak{g}^-(z)=0,
\end{equation*}
for $z\in(a_j,b_j)$. Now, we consider the transformation
\begin{equation*}
	\bS_n(z)=\mathfrak{c}^{n\sigma_3}\bZ_n(z)\ex^{-n\mathfrak{g}(z)\sigma_3},\quad \sigma_3=\begin{pmatrix}
		1 &0\\0 &-1
	\end{pmatrix},\quad \mathfrak{c}=\lim_{z\to\infty}\frac{\ex^{\mathfrak g(z)}}{z},
\end{equation*}
where $\sigma_3$ is the third Pauli matrix,
i.e., $\mathfrak g$ has asymptotic expansion
\begin{equation}\label{gasymp}
\mathfrak{g}(z)=\log\mathfrak{c} z+\frac{\mathfrak{g}_1}{z}+\ldots,
\end{equation}
as $z\to\infty$. This asymptotic form follows from the asymptotic expansion of $\mathfrak g'$ at infinity. By construction, $\bS_n(z)=\bI+\OO(z^{-1})$ as $z\to\infty$. Now, the jump condition for $z\in C_j\cap\compl^+$ is given by
\begin{equation*}
\bS_n^+(z)=\bS_n^-(z)\ex^{n\mathfrak{g}(z)\sigma_3}\begin{pmatrix}
	1 &0\\-1/w_j(z) &1
\end{pmatrix}\ex^{-n\mathfrak{g}(z)\sigma_3}=\bS_n^-(z)\begin{pmatrix}
1 &0\\-\ex^{-2n\mathfrak{g}(z)}/w_j(z) &1
\end{pmatrix},
\end{equation*}
and similarly, the jump condition for $z\in C_j\cap\compl^-$ is given by
\begin{equation*}
	\bS_n^+(z)=\bS_n^-(z)\begin{pmatrix}
		1 &0\\\ex^{-2n\mathfrak{g}(z)}/w_j(z) &1
	\end{pmatrix}.
\end{equation*}
For $z\in(a_j,b_j)$, the jump condition is unchanged:
\begin{equation*}
	\begin{aligned}
	\bS_n^+(z)&=\bS_n^-(z)\ex^{n\mathfrak{g}^-(z)\sigma_3}\begin{pmatrix}
		0 &w_j(z)\\-1/w_j(z) &0
	\end{pmatrix}\ex^{-n\mathfrak{g}^+(z)\sigma_3}\\&=
\bS_n^-(z)\begin{pmatrix}
		0 &\ex^{n(\mathfrak{g}^+(z)+\mathfrak{g}^-(z))}w_j(z)\\-\ex^{n(\mathfrak{g}^+(z)+\mathfrak{g}^-(z))}/w_j(z) &1
	\end{pmatrix}=
\bS_n^-(z)\begin{pmatrix}
0 &w_j(z)\\-1/w_j(z) &1
\end{pmatrix}.
\end{aligned}
\end{equation*}
For $z\in(b_j,a_{j+1})$, the jump condition is given by
\begin{equation*}
	\begin{aligned}
		\bS_n^+(z)&=\bS_n^-(z)\ex^{n\mathfrak{g}^-(z)\sigma_3}\ex^{-n\mathfrak{g}^+(z)\sigma_3}\\&=
		\bS_n^-(z)\begin{pmatrix}
			\ex^{-n(\mathfrak{g}^+(z)-\mathfrak{g}^-(z))} &0\\
			0 &\ex^{n(\mathfrak{g}^+(z)-\mathfrak{g}^-(z))}
		\end{pmatrix}=
		\bS_n^-(z)\begin{pmatrix}
			\ex^{-n\Delta_j} &0\\
			0 &\ex^{n\Delta_j}
		\end{pmatrix}.
	\end{aligned}
\end{equation*}
The asymptotic conditions at the endpoints remain unchanged since $|\mathfrak g|$ is bounded in the neighborhood of the contours. We have arrived at the normalized Riemann--Hilbert problem:
\begin{equation}\label{RHPgaps}
\begin{aligned}
	&\bS_n~\text{is analytic in}~\compl\setminus\left([a_1,b_{g+1}]\cup\bigcup_{j=1}^{g+1}C_j\right),\\
	&\bS_n^+(z)=\bS_n^-(z)\begin{pmatrix}
		1 &0\\-\frac{\ex^{-2n\mathfrak{g}(z)}}{w_j(z)} &1
	\end{pmatrix},\quad z\in C_j\cap\compl^+,\\
	&\bS_n^+(z)=\bS_n^-(z)\begin{pmatrix}
		1 &0\\\frac{\ex^{-2n\mathfrak{g}(z)}}{w_j(z)} &1
	\end{pmatrix},\quad z\in C_j\cap\compl^-,\\
	&\bS_n^+(z)=\bS_n^-(z)\begin{pmatrix}
		0 &w_j(z)\\-\frac{1}{w_j(z)} &0
	\end{pmatrix},\quad z\in(a_j,b_j),\\
	&\bS_n^+(z)=\bS_n^-(z)\begin{pmatrix}
		\ex^{-n\Delta_j} &0\\0 &\ex^{n\Delta_j}
	\end{pmatrix},\quad z\in(b_j,a_{j+1}),\\
	&\bS_n(z)=\bI+\OO(z^{-1}),\quad z\to\infty,\\
		&\bS_n(z)=\OO\begin{pmatrix}
	1+|z-a_j|^{-\alpha_j/2} &1+|z-a_j|^{\alpha_j/2}\\
	1+|z-a_j|^{-\alpha_j/2} &1+|z-a_j|^{\alpha_j/2}
\end{pmatrix},\quad z\to a_j,\\
&\bS_n(z)=\OO\begin{pmatrix}
	1+|z-b_j|^{-\beta_j/2} &1+|z-b_j|^{\beta_j/2}\\
	1+|z-b_j|^{-\beta_j/2} &1+|z-b_j|^{\beta_j/2}
\end{pmatrix},\quad z\to b_j.
\end{aligned} 
\end{equation}
Note that $\re \mathfrak g(z)>0$ for $z\in C_j$ (see \cite[Appendix B]{Ding2022}, for example) so the jump condition on $C_j$ tends exponentially to the identity as $n\to\infty$.
\subsubsection{Step 3: Removing the jump conditions on the gaps} Since the normalized Riemann--Hilbert problem \eqref{RHPgaps} contains jump conditions on the gaps between intervals, the jump matrix is discontinuous. In order to appropriately solve this problem numerically, we must further transform it. We do this by devising an auxiliary function to remove the jump conditions on the gaps between intervals.

Consider a function $\mathfrak h_n$ defined as 
\begin{equation}\label{hdef}
\begin{aligned}
	\mathfrak h_n(z)&=R(z)\left(\sum_{j=1}^{g+1}
	\frac{1}{2\pi \im }\int_{a_j}^{b_j}\frac{A_j(n)}{R_+(s)(s-z)}\df s+\sum_{\ell=1}^{g}\int_{b_\ell}^{a_{\ell+1}}\frac{\log(\ex^{n\Delta_\ell})}{R(s)(s-z)}\df s\right)\\&=
R(z)\left(\sum_{j=1}^{g+1}A_j(n)
\mathcal{C}_{[a_j,b_j]}\left[\frac{1}{R_+}\right](z)+\sum_{\ell=1}^{g}\log(\ex^{n\Delta_\ell})\mathcal{C}_{[b_\ell,a_{\ell+1}]}\left[\frac{1}{R}\right](z)\right),
\end{aligned}
\end{equation}
where $A_1(n),\ldots,A_{g+1}(n)$ are undetermined constants. Recall that on the gaps $(b_j,a_{j+1})$, $R$ is analytic. Since $R(z)\sim z^{g+1}$ as $z\to\infty$, we can use the asymptotics of Cauchy integrals to obtain conditions on $A_1(n),\ldots,A_{g+1}(n)$ such that $\mathfrak h_n(z)\sim z^{-1}$ as $z\to\infty$. We need to solve
\begin{equation}\label{hsys}
	\sum_{j=1}^{g+1}
	A_j(n)\int_{a_j}^{b_j}\frac{s^{k-1}}{R_+(s)}\df s=-\sum_{\ell=1}^{g}\log(\ex^{n\Delta_\ell})\int_{b_\ell}^{a_{\ell+1}}\frac{s^{k-1}}{R(s)}\df s,
\end{equation}
for $k=1,\ldots,g+1$ for constants $A_1(n),\ldots,A_{g+1}(n)$. This forms an $(g+1)\times(g+1)$ linear system. Note that $A_1(n),\ldots,A_{g+1}(n)\in\real$ and are bounded above and below due to the fact that $\Delta_j$ is purely imaginary, and the coefficient matrix does not depend on $n$.  Furthermore, this coefficient matrix is always invertible.
\begin{lemma}\label{invert}
The $(g+1)\times(g+1)$ matrix $\bH$ defined by
\begin{equation*}
\bH =\begin{pmatrix}
\int_{a_j}^{b_j}\frac{s^{k-1}}{R_+(s)}\df s
\end{pmatrix}_{1 \leq j,k \leq g+1},
\end{equation*}
is invertible.
\end{lemma}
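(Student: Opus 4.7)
The plan is to proceed by contradiction, translating the vanishing condition into a statement about holomorphic differentials on a hyperelliptic Riemann surface. Suppose $\bH c=0$ for some nonzero $c = (c_1,\ldots,c_{g+1})^T$, and set $P(z)=\sum_{k=1}^{g+1} c_k z^{k-1}$, a polynomial of degree at most $g$. The hypothesis rewrites as
\[
\int_{a_j}^{b_j}\frac{P(s)}{R_+(s)}\,\df s = 0, \qquad j=1,\ldots,g+1.
\]

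First, I would show that the leading coefficient $c_{g+1}$ must vanish by a large-contour argument. The function $F(z)=P(z)/R(z)$ is analytic on $\compl\setminus\bigcup_{j=1}^{g+1}[a_j,b_j]$, with $R_+=-R_-$ across each cut. Deforming a large circle $|z|=M$ to hug each interval from above and below yields
\[
\oint_{|z|=M} F(z)\,\df z = 2\sum_{j=1}^{g+1}\int_{a_j}^{b_j}\frac{P(s)}{R_+(s)}\,\df s = 0.
\]
On the other hand, $R(z)\sim z^{g+1}$ as $z\to\infty$ forces $F(z)=c_{g+1}/z+\OO(z^{-2})$, so the same integral equals $2\pi\im\,c_{g+1}$. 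Hence $c_{g+1}=0$ and $\deg P\le g-1$.

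Next I would lift the remaining conditions to the compact hyperelliptic Riemann surface $X$ defined by $y^2 = \prod_{j=1}^{g+1}(z-a_j)(z-b_j)$, which has $2g+2$ simple branch points and no branching at infinity, so by Riemann--Hurwitz it has genus $g$. Because $\deg P\le g-1$, the differential $\omega=(P(z)/R(z))\,\df z$ extends to a holomorphic differential on $X$: at each branch point $(a_j,0)$ the simple zero of $R$ is canceled by the uniformizer $u=\sqrt{z-a_j}$ (since $\df z=2u\,\df u$), and at each of the two points above infinity the uniformizer $t=1/z$ yields $\omega=\OO(1)\,\df t$. Lifting a counterclockwise loop around $[a_j,b_j]$ to a cycle $\alpha_j$ on $X$, the $j$th vanishing assumption reads
\[
\oint_{\alpha_j}\omega = 2\int_{a_j}^{b_j}\frac{P(s)}{R_+(s)}\,\df s = 0.
\]
Since $\sum_{j=1}^{g+1}\alpha_j=0$ in $H_1(X,\mathbb{Z})$---their sum bounds a disk at infinity on either sheet---any $g$ of these cycles form the $a$-cycles of a canonical symplectic homology basis, so all $a$-periods of $\omega$ vanish.

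The conclusion then follows from the classical fact that a holomorphic differential on a compact Riemann surface with all $a$-periods vanishing is identically zero, a consequence of Riemann's bilinear relations combined with the positivity of the natural $L^2$ inner product on holomorphic differentials. This forces $\omega\equiv 0$, hence $P\equiv 0$ and $c=0$, contradicting $c\ne 0$. The main obstacle is the Riemann surface bookkeeping in Step 2: verifying that the would-be poles of $P/R$ on $X$ genuinely cancel so that $\omega$ is holomorphic, and recognizing that after accounting for the single relation $\sum_j\alpha_j=0$, the $g+1$ vanishing period conditions encode the $g$ independent $a$-periods required by the classical theorem.
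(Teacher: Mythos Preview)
Your argument is correct and rests on the same classical input as the paper's---periods of holomorphic differentials on the hyperelliptic curve $y^2=\prod_j(z-a_j)(z-b_j)$ together with a residue-at-infinity computation---but the organization is genuinely different. The paper works \emph{constructively}: it first isolates the upper-left $g\times g$ block $\bH_g$, identifies the cycles around the cuts as (combinations of) the classical $b$-cycles, and then uses that $\tilde\bA^{-1}\tilde\bB$ is a Riemann matrix with definite real part to conclude $\bH_g$ is invertible; the residue-at-infinity identity is then used as a column operation that reduces $\bH$ to a block lower-triangular matrix with invertible diagonal blocks. You instead argue by contradiction: the residue at infinity kills the top coefficient of the polynomial $P$, leaving a holomorphic differential whose periods over the loops around the cuts---which you take as $a$-cycles---all vanish, whence $\omega\equiv0$ by Riemann's bilinear relations. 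Your route is shorter and avoids the matrix bookkeeping; the paper's route yields a little more structure (an explicit block factorization of $\bH$). One small point worth tightening in your write-up: the assertion that any $g$ of the $\alpha_j$ extend to a canonical symplectic basis deserves a line of justification (they are pairwise disjoint, hence have zero intersection number, and the single relation $\sum_j\alpha_j=0$ in $H_1$ makes any $g$ of them independent, so they span a Lagrangian subspace and can be completed). Also, your contour deformation in Step~1 has a harmless sign slip---a counterclockwise dogbone around $[a_j,b_j]$ gives $-2\int_{a_j}^{b_j}P/R_+$---but this does not affect the conclusion $c_{g+1}=0$.
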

\begin{proof}
As noted before, $\frac{z^{k-1}}{R(z)}$ for $k=1,\ldots,g$ form a basis for holomorphic differentials on the hyperelliptic Riemann surface defined by $w^2=R(z)^2$. Furthermore, for a differential $\omega$, if we define the cycle $\tilde {\mathfrak b}_j$ by
\begin{align*}
    \oint_{\tilde {\mathfrak b}_j} \omega = 2 \int_{a_j}^{b_j} \omega,
\end{align*}
the classical $b$-periods, denoted by $\mathfrak b_j$, $j = 1,2,\ldots, g$, can be expressed via
\begin{align*}
    \tilde {\mathfrak b}_1 &= {\mathfrak b}_1,\\
    \tilde {\mathfrak b}_j &= {\mathfrak b}_j - {\mathfrak b}_{j-1} , \quad j > 1.
\end{align*}
Define the $a$- and $b$-period matrices $\tilde {\mathbf A}$ and $\tilde {\mathbf B}$, respectively, by
\begin{align*}
    \tilde {\mathbf B} = \begin{pmatrix}
\oint_{\mathfrak b_j} \frac{s^{k-1}}{R(s)}\df s
\end{pmatrix}_{1 \leq j,k \leq g}, \quad \tilde {\mathbf A} = \begin{pmatrix}
\oint_{\mathfrak a_j} \frac{s^{k-1}}{R(s)}\df s
\end{pmatrix}_{1 \leq j,k \leq g}, \quad  \oint_{\mathfrak a_j} \omega = -2 \int_{b_j}^{a_{j+1}} \omega.
\end{align*}
Standard theory implies that $\Tilde \bA$ is invertible.  This implies that if $\bH_g$ is the upper-left $g \times g$ subblock of $\bH$, then
\begin{align*}
    \tilde{\mathbf B} \mathbf L = 2 \bH_g, \quad \mathbf L = \mathbf I - \mathbf N^T,
\end{align*}
where $\mathbf N$ is the nilpotent matrix with ones on its superdiagonal and zeros everywhere else.  Then $4 \pi \im \tilde {\mathbf A}^{-1}  \bH_g \mathbf L^{-1}$ is a Riemann matrix (e.g., it has a negative definite real part).  Since this matrix is real, we conclude that $\bH_g$ is invertible.

Furthermore,
\[
\sum_{j=1}^{g+1}\bH_{kj}=\begin{cases}
    0 &k<g+1,
    \\
    \pi\im &k=g+1.
\end{cases}
\]
This follows from the fact that this summation can be written as 
\[
\frac 1 2 \oint_\gamma\frac{s^{k-1}}{R(s)}\df s,
\]
where $\gamma$ is a clockwise oriented curve encircling $[a_1,b_{g+1}]$. This integral can be evaluated by taking the residue at infinity using $R(z)\sim z^{g+1}$ as $z\to\infty$. Thus, we have that 
\[
\bH\begin{pmatrix}
\bI_g &\bone_g\\
\bzero_g^T &1
\end{pmatrix}=\begin{pmatrix}
\bH_g &\bzero_g\\
\bH_{g+1,1:g}&\pi\im
\end{pmatrix},
\]
where $\bzero_g$ is the zero vector of length $g$, $\bone_g$ is a vector of length $g$ of all ones, and $\bH_{g+1,1:g}$ is the first $g$ columns of the $(g+1)$th row of $\bH$. This new matrix has invertible diagonal blocks, so it must be invertible, and thus $\bH$ is also invertible. 
\end{proof}
By construction, we have that 
\begin{equation*}
	\begin{cases}
		\mathfrak{h}_n^+(z)+\mathfrak{h}_n^-(z)=A_j(n) & z\in(a_j,b_j),\\
		\mathfrak{h}_n^+(z)-\mathfrak{h}_n^-(z)=\log(\ex^{n\Delta_j}) & z\in(b_{j+1},a_j).
	\end{cases}
\end{equation*}
Now, consider the transformation
\begin{equation*}
	\Tilde{\bS}_n(z)=\bS_n(z)\ex^{\sigma_3\mathfrak{h}_n(z)}.
\end{equation*} 
Noting that
if $\bS_n^+(z)=\bS_n^-(z)\bK(z)$ on a contour, then 
\begin{equation*}
	\Tilde{\bS}^+_n(z)=\Tilde{\bS}^-_n(z)\ex^{-\sigma_3\mathfrak{h}_n^-(z)}\bK(z)\ex^{\sigma_3\mathfrak{h}_n^+(z)},
\end{equation*}
we find that for $z\in(a_{j+1},b_j)$, 
\begin{equation*}
	\Tilde{\bS}^+_n(z)=\Tilde{\bS}^-_n(z)\begin{pmatrix}
		\ex^{-\mathfrak h_n^-(z)} &0\\0 &\ex^{\mathfrak h_n^-(z)}
	\end{pmatrix}\begin{pmatrix}
		\ex^{-n\Delta_j} &0\\0 &\ex^{n\Delta_j}
	\end{pmatrix}\begin{pmatrix}
		\ex^{\mathfrak h_n^+(z)} &0\\0 &\ex^{-\mathfrak h_n^+(z)}
	\end{pmatrix}=\Tilde{\bS}^-_n(z),
\end{equation*}
meaning that the jump condition on the gaps has been eliminated. Applying our transformation to the remaining jump conditions, we use the fact that the asymptotic conditions at the endpoints are unaffected since $|\mathfrak h_n|$ is bounded by \cite[Lemma 7.2.2]{ablowitz_fokas_2003}. We arrive at the new Riemann--Hilbert problem
\begin{equation}\label{RHPfinal}
\begin{aligned}
	&\Tilde \bS_n~\text{is analytic in}~\compl\setminus\left(\bigcup_{j=1}^{g+1}[a_j,b_j]\cup\bigcup_{j=1}^{g+1}C_j\right),\\
	&\Tilde \bS_n^+(z)=\Tilde \bS_n^-(z)\begin{pmatrix}
		1 &0\\-\frac{1}{w_j(z)}\ex^{2\mathfrak h_n(z)-2n\mathfrak{g}(z)} &1
	\end{pmatrix},\quad z\in C_j\cap\compl^+,\\
	&\Tilde \bS_n^+(z)=\Tilde \bS_n^-(z)\begin{pmatrix}
		1 &0\\\frac{1}{w_j(z)}\ex^{2\mathfrak h_n(z)-2n\mathfrak{g}(z)} &1
	\end{pmatrix},\quad z\in C_j\cap\compl^-,\\
	&\Tilde \bS_n^+(z)=\Tilde \bS_n^-(z)\begin{pmatrix}
		0 &w_j(z)\ex^{-A_j(n)}\\-\frac{1}{w_j(z)}\ex^{A_j(n)} &0
	\end{pmatrix},\quad z\in(a_j,b_j),\\
	&\Tilde \bS_n(z)=\bI+\OO(z^{-1}),\quad z\to\infty,\\
&\Tilde\bS_n(z)=\OO\begin{pmatrix}
	1+|z-a_j|^{-\alpha_j/2} &1+|z-a_j|^{\alpha_j/2}\\
	1+|z-a_j|^{-\alpha_j/2} &1+|z-a_j|^{\alpha_j/2}
\end{pmatrix},\quad z\to a_j,\\
&\Tilde\bS_n(z)=\OO\begin{pmatrix}
	1+|z-b_j|^{-\beta_j/2} &1+|z-b_j|^{\beta_j/2}\\
	1+|z-b_j|^{-\beta_j/2} &1+|z-b_j|^{\beta_j/2}
\end{pmatrix},\quad z\to b_j,
\end{aligned}
\end{equation}
that we will solve numerically. As shorthand, we define
\begin{equation}\label{shorthand}
\begin{split}
&F^1_j(z)=\begin{pmatrix}
	1 &0\\-\frac{1}{w_j(z)}\ex^{2\mathfrak h_n(z)-2n\mathfrak{g}(z)} &1
\end{pmatrix},\quad F^2_j(z)=\begin{pmatrix}
0 &w_j(z)\ex^{-A_j(n)}\\-\frac{1}{w_j(z)}\ex^{A_j(n)} &0
\end{pmatrix}, \\ &F^3_j(z)=\begin{pmatrix}
1 &0\\\frac{1}{w_j(z)}\ex^{2\mathfrak h_n(z)-2n\mathfrak{g}(z)} &1
\end{pmatrix}.
\end{split}
\end{equation}
We include example plots of the curves and jump conditions in this problem in Figure \ref{jumpfig}.\\
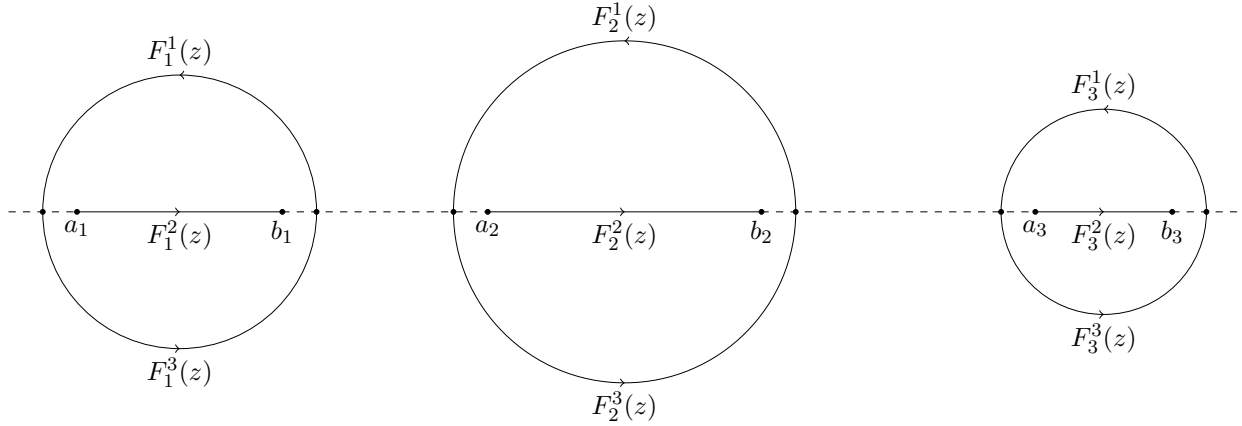
\begin{figure}
	\resizebox{\textwidth}{!}{%
		\begin{tikzpicture}
			\draw (-8,0) -- (-5,0);
			\draw (-2,0) -- (2,0);
			\draw (6,0) -- (8,0);
			\draw [dashed] (-9,0) -- (-8,0); 
			\draw [dashed] (-5,0) -- (-2,0);
			\draw [dashed] (2,0) -- (6,0);
			\draw [dashed] (8,0) -- (9,0);
			\filldraw[black] (-8,0) circle (1pt) node[anchor=north]{$a_1$};
			\filldraw[black] (-5,0) circle (1pt) node[anchor=north]{$b_1$};
			\filldraw[black] (-2,0) circle (1pt) node[anchor=north]{$a_2$};
			\filldraw[black] (2,0) circle (1pt) node[anchor=north]{$b_2$};
			\filldraw[black] (6,0) circle (1pt) node[anchor=north]{$a_3$};
			\filldraw[black] (8,0) circle (1pt) node[anchor=north]{$b_3$};
			\filldraw[black] (-8.5,0) circle (1pt);
			\filldraw[black] (-4.5,0) circle (1pt);
			\filldraw[black] (-2.5,0) circle (1pt);
			\filldraw[black] (2.5,0) circle (1pt);
			\filldraw[black] (5.5,0) circle (1pt);
			\filldraw[black] (8.5,0) circle (1pt);
			
			\draw (-6.5,0) circle (2);
			\path[tips,->] (-8,0) -- (-6.5,0);
			\node[anchor=north] at (-6.5,0) {$F_1^2(z)$};
			\path[tips,->] (-4.5,0) arc (0:90:2 );
			\node[anchor=south] at (-6.5,2) {$F_1^1(z)$};
			\path[tips,->] (-8.5,0) arc (180:270:2);
			\node[anchor=north] at (-6.5,-2) {$F_1^3(z)$};
			
			\draw (0,0) circle (2.5);
			\path[tips,->] (-2,0) -- (0,0);
			\node[anchor=north] at (0,0) {$F_2^2(z)$};
			\path[tips,->] (2.5,0) arc (0:90:2.5 );
			\node[anchor=south] at (0,2.5) {$F_2^1(z)$};
			\path[tips,->] (-2.5,0) arc (180:270:2.5);
			\node[anchor=north] at (0,-2.5) {$F_2^3(z)$};
			
			\draw (7,0) circle (1.5);
			\path[tips,->] (6,0) -- (7,0);
			\node[anchor=north] at (7,0) {$F_3^2(z)$};
			\path[tips,->] (8.5,0) arc (0:90:1.5);
			\node[anchor=south] at (7,1.5) {$F_3^1(z)$};
			\path[tips,->] (5.5,0) arc (180:270:1.5);
			\node[anchor=north] at (7,-1.5) {$F_3^3(z)$};
			
		\end{tikzpicture}
	}%
\caption{Jump conditions in a three interval case. $F^k_j(z)$ are as defined in \eqref{shorthand}.}
\label{jumpfig}
\end{figure}

\subsection{Recurrence coefficients}\label{sect:recurr}
Recurrence coefficients for the orthogonal polynomials corresponding to the weight function \eqref{weight} are directly related to the first-order asymptotic behavior (i.e., the residue at infinity) of the unique solution \eqref{exactsol} to the original problem \eqref{RHPOG}. If \begin{equation*}
	\bY_n(z)=\left(\bI+z^{-1}\bY_n^{(1)}+\OO(z^{-2})\right)\begin{pmatrix}
		z^n &0\\0&z^{-n}
	\end{pmatrix},
\end{equation*} 
as $z\to\infty$, then the recurrence coefficients \eqref{recurr} are given by
\begin{equation}\label{recurrinit}
	\begin{aligned}
		&a_n=\left(\bY_n^{(1)}\right)_{11}-\left(\bY_{n+1}^{(1)}\right)_{11},\\
		&b_n=\sqrt{\left(\bY_{n+1}^{(1)}\right)_{12}\left(\bY_{n+1}^{(1)}\right)_{21}},
	\end{aligned}
\end{equation}
for $n=0,1,2,\ldots$ \cite{deift_2000}. We now use this to derive equivalent formulae based on the first-order asymptotic behavior of \eqref{RHPfinal} by working backwards through the transformations. Define $\mathfrak h_n^{(1)}$ by
\begin{equation}\label{hasymp}
	\mathfrak h_n(z)=\frac{\mathfrak h^{(1)}_{n}}{z}+\OO(z^{-2}),\quad z\to\infty.
\end{equation}
Then, 
\begin{equation*}
	\ex^{\mathfrak{h}_n(z)}=1+\frac{\mathfrak h_n^{(1)}}{z}+\OO(z^{-2}),\quad z\to\infty.
\end{equation*}
From this, if 
\begin{equation*}
	\begin{aligned}
	&\bS_n(z)=\bI+z^{-1} \bS_{n}^{(1)}+\OO(z^{-2}),\\
	&\Tilde \bS_n(z)=\bI+z^{-1}\Tilde \bS_{n}^{(1)}+\OO(z^{-2}),
	\end{aligned}
\end{equation*}
as $z\to\infty$, then 
\begin{align*}
	\bS_n(z)&=\Tilde \bS_n(z)\ex^{-\mathfrak{h}_n(z)\sigma_3}=\left(\bI+z^{-1}\Tilde \bS_{n}^{(1)}+\OO(z^{-2})\right)\left(\bI-z^{-1}\mathfrak h_n^{(1)}\sigma_3+\OO(z^{-2})\right)\\&=
	\left(\bI+z^{-1}\left(\Tilde \bS_{n}^{(1)}-\mathfrak h_n^{(1)}\sigma_3\right)+\OO(z^{-2})\right),
\end{align*}
implying $\bS_{n}^{(1)}=\Tilde\bS_{n}^{(1)}-\mathfrak h_n^{(1)}\sigma_3$.

Now, we obtain the first-order asymptotics of $\bZ_n$ \eqref{RHPlensed} from $\bS_{n}$ \eqref{RHPgaps}. Recall that 
\begin{equation*}
	\bZ_n(z)=\mathfrak{c}^{-n\sigma_3}\bS_n(z)\ex^{n\mathfrak{g}(z)\sigma_3}.
\end{equation*}
Then, the asymptotic expansion of $\mathfrak g$ \eqref{gasymp} gives
\begin{equation*}
	 \bZ_n(z)=\mathfrak{c}^{-n\sigma_3}\left(\bI+\bS_n^{(1)}z^{-1}+\OO(z^{-2})\right)\mathfrak{c}^{n\sigma_3}z^n\sigma_3\ex^{n\sigma_3\left(\mathfrak g_1z^{-1}+\OO(z^{-2})\right)}.
\end{equation*}
Expanding
\begin{equation*}
	\ex^{n\sigma_3\left(\mathfrak g_1z^{-1}+\OO(z^{-2})\right)}=\bI+n\mathfrak{g}_1\sigma_3z^{-1}+\OO(z^{-2}),
\end{equation*}
gives
\begin{equation*}
	\bZ_n(z)=\left(\bI+\left(\mathfrak{c}^{-n\sigma_3}\bS_n^{(1)}\mathfrak{c}^{n\sigma_3}+n\mathfrak{g}_1\sigma_3\right)z^{-1}+\OO(z^{-2})\right)\begin{pmatrix}
		z^n&0\\0&z^{-n}
	\end{pmatrix},
\end{equation*}
and therefore
\begin{equation*}
	 \bZ_n^{(1)}=\mathfrak{c}^{-n\sigma_3}\bS_n^{(1)}\mathfrak{c}^{n\sigma_3}+n\mathfrak{g}_1\sigma_3=\begin{pmatrix}
		\left(\bS_n^{(1)}\right)_{11}+n\mathfrak{g}_1 &\mathfrak{c}^{2n}\left(\bS_n^{(1)}\right)_{12}\\
		\mathfrak{c}^{-2n}\left(\bS_n^{(1)}\right)_{21} &\left(\bS_n^{(1)}\right)_{22}-n\mathfrak{g}_1
	\end{pmatrix}.
\end{equation*}
Finally, the lensing of the problem in Step 1 does not affect the asymptotic behavior at infinity, so
\begin{equation*}
\bY_n^{(1)}=\begin{pmatrix}
	\left(\Tilde \bS_n^{(1)}\right)_{11}-\mathfrak h_n^{(1)}+n\mathfrak{g}_1 &\mathfrak{c}^{2n}\left(\Tilde \bS_n^{(1)}\right)_{12}\\
	\mathfrak{c}^{-2n}\left(\Tilde \bS_n^{(1)}\right)_{21} &\left(\Tilde \bS_n^{(1)}\right)_{22}+\mathfrak h_n^{(1)}-n\mathfrak{g}_1
\end{pmatrix},
\end{equation*}
yielding new recurrence formulae
\begin{equation}\label{realrecurr}
	\begin{aligned}
		a_n&=\left(\Tilde \bS_{n}^{(1)}\right)_{11}-\left(\Tilde \bS_{n+1}^{(1)}\right)_{11}-\mathfrak h_n^{(1)}+\mathfrak h_{n+1}^{(1)}-\mathfrak g_1,\\
		b_n&=\sqrt{\left(\Tilde \bS_{n+1}^{(1)}\right)_{12}\left(\Tilde \bS_{n+1}^{(1)}\right)_{21}},
	\end{aligned}
\end{equation}
that we will use to recover the recurrence coefficients when numerically solving the problem \eqref{RHPfinal}.

As $n\to\infty$, the jump conditions on $C_j$ tend exponentially to the identity matrix, so the FIK Riemann--Hilbert problem can be considered with just the jump conditions on the intervals, asymptotically. While formulae for the recurrence coefficients in this asymptotic case are known \cite[Appendix B]{Ding2022}, their evaluation is made difficult due to the presence of theta functions composed with an Abel map. Thus, a numerical method for solving this form of the FIK Riemann--Hilbert problem not only allows for efficient evaluation of these rather complicated formulae, but can potentially be utilized to compute theta functions. Related to this, one could choose the functions $h_j$ so that $w_j(z) = w_k(z)$ for all $j,k$.  In this case, individual circles around each interval $[a_j,b_j]$ could be replaced with a large circle around $[a_1,b_{g+1}]$.  We will explore this further in future work.

\section{Numerical methods}\label{numerics}
\subsection{Numerically computing Cauchy integrals}
We compute orthogonal polynomial coefficients via \eqref{realrecurr} by solving the Riemann--Hilbert problem \eqref{RHPfinal} numerically. Doing so will heavily rely on the computation of Cauchy integrals and, specifically, on computing Cauchy integrals of Chebyshev polynomials on an interval. Recalling that the weight function of Chebyshev-$\TT$ on $\mathbb{I}=[-1,1]$ is given by $w_\TT(x)=\frac{1}{\pi\sqrt{x+1}\sqrt{1-x}}$ and that $\TT_0=1$, 
\[
\mathcal{C}_{\mathbb{I}}[\TT_0w_\TT](z)=\frac{\im}{2\pi\sqrt{z-1}\sqrt{z+1}}.
\]
Similarly, 
\begin{align*}
	&\mathcal{C}_{\mathbb{I}}[U_0w_U](z)=\frac{\im}{\pi}\left(z-\sqrt{z-1}\sqrt{z+1}\right),\\ &\mathcal{C}_{\mathbb{I}}[V_0w_V](z)=\frac{\im}{2\pi}\left(-1+\frac{\sqrt{z+1}}{\sqrt{z-1}}\right),\quad
	\mathcal{C}_{\mathbb{I}}[W_0w_W](z)=\frac{\im}{2\pi}\left(1-\frac{\sqrt{z-1}}{\sqrt{z+1}}\right).
\end{align*}
\begin{theorem}\label{chebints}
	Let $C^\square_k(z)=\mathcal{C}_{\mathbb{I}}[\square_kw_\square](z)$ where $\square$ denotes the kind of Chebyshev polynomial $\TT,U,V,W$. Then, for $k\geq1$,
	\begin{align*}
		&C^\TT_k(z)=\sqrt{2}C_0^\TT(z)J_+^{-1}(z)^k,
		&C^U_k(z)=C_0^U(z) J_+^{-1}(z)^k,\\
		&C^V_k(z)=C_0^V(z) J_+^{-1}(z)^k,
		&C^W_k(z)=C_0^W(z) J_+^{-1}(z)^k,
	\end{align*}
where $J^{-1}_+$ is a right inverse of the Joukowsky map, mapping $\compl\setminus[-1,1]$ to the interior of the unit circle, given by
\[
J^{-1}_+(z)=z-\sqrt{z-1}\sqrt{z+1}.
\] 
\end{theorem}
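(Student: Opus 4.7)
The plan is to prove the formula by induction on $k$, using the fact (recorded in \eqref{intrecurr}) that Cauchy integrals of orthonormal polynomials satisfy the same three-term recurrence as the polynomials themselves, aside from an inhomogeneous constant at $k=0$. The key algebraic input is that $\phi(z) := J_+^{-1}(z)$ satisfies
\[
\phi(z)^2 - 2z\,\phi(z) + 1 = 0,
\]
which is immediate from the fact that the Joukowsky map $J(w)=(w+w^{-1})/2$ inverts to the chosen branch (the one that vanishes at infinity). In particular, $\phi^{k+1} = 2z\phi^k - \phi^{k-1}$ for every $k \geq 1$, so powers of $\phi$ satisfy the standard Chebyshev-like second-order difference relation.

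Reading off the Jacobi operators listed in Section \ref{OPCI}, each variant $\square\in\{\TT,U,V,W\}$ has $a_k = 0$ and $b_k = 1/2$ for all $k \geq 1$, and each weight $w_\square$ is normalized so that $\int_{\mathbb I} w_\square = 1$ (a one-line check in each case). Substituting into \eqref{intrecurr} produces the uniform recurrence $C_{k+1}^\square(z) = 2zC_k^\square(z) - C_{k-1}^\square(z)$, valid for all $k \geq 2$ in every case and already for $k=1$ in the $U$, $V$, $W$ cases. Only Chebyshev-$\TT$ has a modified $k=1$ equation, $C_2^\TT = 2zC_1^\TT - \sqrt 2\, C_0^\TT$, because $b_0 = 1/\sqrt 2 \neq b_1 = 1/2$. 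The base case is then handled by solving the $k=0$ equation of \eqref{intrecurr} for $C_1^\square$ using the explicit formulae for $C_0^\square$ recalled just before the theorem; each of the four cases is a short algebraic simplification (using the observation that $C_0^\square(z)\sqrt{z-1}\sqrt{z+1}$ reduces to a constant for $\TT,U$ and to a simple linear expression for $V,W$), and in each case one confirms $C_1^\square = c_\square C_0^\square\, \phi$ with $c_\TT = \sqrt 2$ and $c_U = c_V = c_W = 1$.

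With the base case secured, the induction is essentially automatic: assuming $C_{k-1}^\square = c_\square C_0^\square \phi^{k-1}$ and $C_k^\square = c_\square C_0^\square \phi^k$, the recurrence combined with $\phi^{k+1} = 2z\phi^k - \phi^{k-1}$ yields $C_{k+1}^\square = c_\square C_0^\square \phi^{k+1}$. The one mildly subtle point — and essentially the only obstacle in the argument — is the Chebyshev-$\TT$ transition at $k+1 = 2$, where the recurrence carries an extra factor of $\sqrt 2$. This works out cleanly because
\[
\sqrt 2\, C_0^\TT \phi^2 = \sqrt 2\, C_0^\TT (2z\phi - 1) = 2z\bigl(\sqrt 2\, C_0^\TT \phi\bigr) - \sqrt 2\, C_0^\TT = 2zC_1^\TT - \sqrt 2\, C_0^\TT,
\]
so the formula propagates past $k=1$ into the uniform-recurrence regime and the induction continues without further modification.
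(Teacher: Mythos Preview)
Your proof is correct and follows essentially the same route as the paper: induction on $k$ using the three-term recurrence \eqref{intrecurr}, the quadratic identity $\phi^2-2z\phi+1=0$ for $\phi=J_+^{-1}$, a direct $k=1$ base case from the explicit $C_0^\square$, and a separate verification of the $\TT$ case at $k=2$ to handle the anomalous $b_0=1/\sqrt 2$. The only difference is presentational---you package the four variants with a constant $c_\square$ where the paper writes out each computation---but the argument is the same.
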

\begin{proof}
	We proceed by induction by first considering the base case $k=1$ separately for each kind of Chebyshev polynomial. Using the recurrence relation \eqref{intrecurr} that $C^\square_k$ must satisfy, we can directly compute the following.
	\begin{align*}
		C^\TT_1(z)&=\sqrt{2}\left(zC^\TT_0(z)+\frac{1}{2\pi \im}\right)=
		\sqrt{2}\left(z\frac{\im}{2\pi\sqrt{z-1}\sqrt{z+1}}+\frac{1}{2\pi \im}\right)\\&=
		\sqrt{2}\frac{\im}{2\pi}\frac{1}{\sqrt{z-1}\sqrt{z+1}}\left(z-\sqrt{z-1}\sqrt{z+1}\right)=\sqrt{2}C_0^\TT(z)J_+^{-1}(z).
	\end{align*}
	\begin{align*}
		C^U_1(z)&=2\left(zC^U_0(z)+\frac{1}{2\pi \im}\right)=
		\frac{\im}{\pi}\left(z^2-2z\sqrt{z-1}\sqrt{z+1}+(z^2-1)\right)\\&=\frac{\im}{\pi}J^{-1}_+(z)^2=\frac{\im}{\pi}\left(z-\sqrt{z-1}\sqrt{z+1}\right)J^{-1}_+(z)=C_0^U(z)J_+^{-1}(z).
	\end{align*}
	\begin{align*}
		C^V_1(z)&=(2z-1)C^V_0(z)+2\frac{1}{2\pi i}=
		\frac{\im}{2\pi}\left(-(z+1)+z\frac{\sqrt{z+1}}{\sqrt{z-1}}+\sqrt{z+1}\sqrt{z-1}-z\right)\\&=
		\frac{\im}{2\pi}\left(-1+\frac{\sqrt{z+1}}{\sqrt{z-1}}\right)\left(z-\sqrt{z-1}\sqrt{z+1}\right)=C_0^V(z)J_+^{-1}(z).
	\end{align*}
	\begin{align*}
		C^W_1(z)&=(2z+1)C^W_0(z)+2\frac{1}{2\pi i}=
		\frac{\im}{2\pi}\left((z-1)-z\frac{\sqrt{z-1}}{\sqrt{z+1}}-\sqrt{z+1}\sqrt{z-1}+z\right)\\&=
		\frac{\im}{2\pi}\left(1-\frac{\sqrt{z-1}}{\sqrt{z+1}}\right)\left(z-\sqrt{z-1}\sqrt{z+1}\right)=C_0^W(z)J_+^{-1}(z).
	\end{align*}
	Now,
	\begin{align*}
		2zJ^{-1}_+(z)-1&=2z\left(z-\sqrt{z-1}\sqrt{z+1}\right)-1=
		z^2-2z\sqrt{z-1}\sqrt{z+1}+(z^2-1)=J^{-1}_+(z)^2.
	\end{align*}
	Using this for the second base case $k=2$, 
	\begin{align*}
		C^\TT_2(z)&=2zC^\TT_1(z)-\sqrt{2}C_0^\TT(z)=2\sqrt{2}zC^\TT_0(z)J^{-1}_+(z)-\sqrt{2}C_0^\TT(z)\\&=
		\sqrt{2}C_0^\TT(z)\left(2zJ^{-1}_+(z)-1\right)=\sqrt{2}C_0^\TT(z)J^{-1}_+(z)^2.
	\end{align*}
	As an inductive step, we let $k\geq3$ and find that our recurrence gives
	\begin{align*}
		C^\TT_k(z)&=2zC^\TT_{k-1}(z)-C^\TT_{k-2}(z)=2z\sqrt{2}C_0^\TT(z)J^{-1}_+(z)^{k-1}+\sqrt{2}C_0^\TT(z)J^{-1}_+(z)^{k-2}\\&=
		\sqrt{2}C_0^\TT(z)J^{-1}_+(z)^{k-2}\left(2zJ^{-1}_+(z)-1\right)=\sqrt{2}C_0^\TT(z)J^{-1}_+(z)^{k},
	\end{align*}
	completing the proof for Chebyshev-$\TT$. For $U,V$, and $W$, see that the theorem is trivially true for $k=0$, so we can use $k=0,1$ as our base cases. Letting $\square$ denote $U,V$, or $W$, we let $k\geq2$ and find from the recurrence that
	\begin{align*}
		C^\square_k(z)&=2zC^\square_{k-1}(z)-C^\square_{k-2}(z)=2zC^\square_0(z)J^{-1}_+(z)^{k-1}+C^\square_0(z)J^{-1}_+(z)^{k-2}\\&=
		C^\square_0(z)J^{-1}_+(z)^{k-2}\left(2zJ^{-1}_+(z)-1\right)=C^\square_0(z)J^{-1}_+(z)^{k},
	\end{align*}
	completing the proof for all cases.
\end{proof}
These integrals can be scaled and shifted appropriately to compute $\mathcal{C}_{[a,b]}[fw]$ for an arbitrary interval $[a,b]$. If we let $\Tilde{\TT}_k(x;[a,b])=\TT_k(M^{-1}(x;[a,b]))$ with weight function $\Tilde{w}_\TT(x;[a,b])=\frac{1}{\pi\sqrt{x-a}\sqrt{b-x}}$, $\Tilde{U}_k(x;[a,b])=U_k(M^{-1}(x;[a,b]))$ with weight function $\Tilde{w}_U(x;[a,b])=\frac{2}{\pi}\left(\frac{2}{b-a}\right)^2\sqrt{x-a}\sqrt{b-x}$, $\Tilde{V}_k(x;[a,b])=V_k((M^{-1}(x;[a,b])))$ with weight function $\Tilde{w}_V(x;[a,b])=\frac{2}{\pi(b-a)}\frac{\sqrt{x-a}}{\sqrt{b-x}}$, and $\Tilde{W}_k(x;[a,b])=W_k(M^{-1}(x;[a,b]))$ with weight function $\Tilde{w}_W(x;[a,b])=\frac{2}{\pi(b-a)}\frac{\sqrt{b-x}}{\sqrt{x-a}}$ where
\begin{equation*}
	M^{-1}(x;[a,b])=\frac{2}{b-a}\left(x-\frac{b+a}{2}\right),
\end{equation*} then
\begin{align*}
	\mathcal{C}_{[a,b]}[\Tilde \TT_0\Tilde w_T](z)&=\frac{\im}{2\pi\sqrt{z-a}\sqrt{z-b}},\\
	\mathcal{C}_{[a,b]}[\Tilde \TT_k\Tilde w_T](z)&=\frac{\im}{\sqrt{2}\pi\sqrt{z-a}\sqrt{z-b}}J^{-1}_+\left(M^{-1}(z;[a,b])\right)^k,&k\geq1,\\
	\mathcal{C}_{[a,b]}[\Tilde U_k\Tilde w_U](z)&=\frac{\im}{\pi}\frac{2}{b-a}{J^{-1}_+\left(M^{-1}(z;[a,b])\right)}^{k+1},&k\geq0,\\
	\mathcal{C}_{[a,b]}[\Tilde V_k\Tilde w_V](z)&=\frac{\im}{2\pi}\frac{2}{b-a}\left(-1+\frac{\sqrt{z-a}}{\sqrt{z-b}}\right)J^{-1}_+\left(M^{-1}(z;[a,b])\right)^k,&k\geq0,\\
	\mathcal{C}_{[a,b]}[\Tilde W_k\Tilde w_W](z)&=\frac{\im}{2\pi}\frac{2}{b-a}\left(1-\frac{\sqrt{z-b}}{\sqrt{z-a}}\right)J^{-1}_+\left(M^{-1}(z;[a,b])\right)^k,&k\geq0.
\end{align*}
It is important to determine the leading-order asymptotic behavior of these integrals for our later calculations. We have
\begin{equation}\label{chebasymp}
\begin{aligned}
	&\mathcal{C}_{[a,b]}[\Tilde \square_0\Tilde w_\square](z)=\frac{\im}{2\pi}z^{-1}+\OO(z^{-2}),\\
	&\mathcal{C}_{[a,b]}[\Tilde \square_j\Tilde w_\square](z)=\OO(z^{-j-1}),\quad j>0,
\end{aligned}
\end{equation}
where $\square$ denotes any of the four kinds of Chebyshev polynomial as $z\to\infty$. This follows from the above normalizations.

\subsection{Numerically computing $\mathfrak{g}$}
To determine $\mathfrak{g}$, we first obtain the coefficients of $Q_g(z)$ via solving the linear system \eqref{gsys} by numerically computing the relevant integrals via a Chebyshev-$\TT$-based quadrature rule\footnote{We can easily do this by getting the first Chebyshev coefficient of $z^j\sqrt{z-a_k}\sqrt{b_k-x}/R(z)$ in a mapped Chebyshev-$\TT$ series via the discrete cosine transform.}. 
\begin{lemma}[Plemelj Lemma, \cite{Muskhelishvili1977}, Section 17]\label{Plemelj} Let $\Gamma\subset\compl$ and $f$ be H\"older continuous on $\Gamma$, then for $z\in\Gamma$ that is not an endpoint of $\Gamma$,
\[
\mathcal{C}_\Gamma^+f(z)-\mathcal{C}_\Gamma^-f(z)=f(z).
\]
\end{lemma}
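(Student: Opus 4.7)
The plan is to decompose $f(s) = f(z) + (f(s) - f(z))$ inside the Cauchy integrand and treat the two pieces separately: for $z' \in \compl \setminus \Gamma$ near $z$,
\[
\mathcal{C}_\Gamma f(z') = \frac{f(z)}{2\pi \im}\int_\Gamma \frac{\df s}{s - z'} + \frac{1}{2\pi \im}\int_\Gamma \frac{f(s) - f(z)}{s - z'}\,\df s.
\]
I will show the first term contributes exactly $f(z)$ to the jump $\mathcal{C}_\Gamma^+ f(z) - \mathcal{C}_\Gamma^- f(z)$, while the second term extends continuously across $\Gamma$ at $z$ and therefore contributes nothing.

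For the first term, I would isolate a small sub-arc $\Gamma_\epsilon$ of $\Gamma$ containing $z$ (possible since $z$ is not an endpoint) and split the integral accordingly. On $\Gamma \setminus \Gamma_\epsilon$ the integrand is bounded and analytic in $z'$ in a neighborhood of $z$, so this piece contributes no jump. On $\Gamma_\epsilon$, after reparametrizing so that the contour is locally a straight segment through $z = 0$ oriented along the positive real axis, the jump reduces to the standard Sokhotski identity $\lim_{\eta \to 0^+} \int_{-\epsilon}^{\epsilon} \df t/(t \mp \im \eta) = \pm \im \pi + \OO(\epsilon)$. Subtracting the $+$ and $-$ limits and multiplying by $f(z)/(2\pi \im)$ produces a jump of $f(z)$ up to an error that vanishes with $\epsilon$, hence exactly $f(z)$.

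For the second term, the H\"older condition $|f(s) - f(z)| \leq C|s - z|^\alpha$ with some $\alpha \in (0,1]$, combined with the non-tangential approach estimate $|s - z'| \geq c(|s - z| + |z - z'|)$, gives the uniform pointwise bound $|(f(s) - f(z))/(s - z')| \leq C' |s - z|^{\alpha - 1}$, which is integrable along $\Gamma$ near $z$. Dominated convergence then implies this integral extends continuously to $z' = z$ from either side with a common limit, so its contribution to the jump vanishes. The main obstacle is the local bookkeeping for the first term: the straightening of $\Gamma$ must preserve orientation and the $\pm$ labels of the boundary values, so that $\mathcal{C}_\Gamma^+ - \mathcal{C}_\Gamma^-$ recovers $+f(z)$ rather than $-f(z)$. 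Once this sign is correctly tracked, the remainder is routine estimation.
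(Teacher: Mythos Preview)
Your argument is the standard proof of the Plemelj jump relation and is correct: the decomposition $f(s)=f(z)+(f(s)-f(z))$, the local Sokhotski computation for the constant piece, and the H\"older/dominated-convergence argument for the remainder are exactly the classical steps. Note that the paper does not supply its own proof here; it simply quotes the result from Muskhelishvili, whose Section~17 carries out essentially the same decomposition you describe, so there is nothing further to compare.
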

Motivated by \cite[Section 5.5]{trogsworldllc}, we use Lemma \ref{Plemelj} to rewrite $\mathfrak{g}'$ as a Cauchy integral. We have that $R^+(z)=-R^-(z)$ for $z\in(a_j,b_j)$, and $Q_g$ is of lower degree than $R$, so $\mathfrak g'$ solves the scalar Riemann--Hilbert problem
\begin{equation*}
\begin{aligned}
&(\mathfrak{g}')^+(z)=(\mathfrak{g}')^-(z)+2\frac{Q_g(z)}{R^+(z)},\quad z\in\bigcup_{j=1}^{g+1}(a_j,b_j),\\
&\mathfrak{g}'(z)=0,\quad z\to\infty.
\end{aligned}
\end{equation*}
Thus, by Lemma \ref{Plemelj},
\begin{equation*}
\mathfrak{g}'(z)=\sum_{j=1}^{g+1}2\mathcal{C}_{[a_j,b_j]}\left[\frac{Q_g}{R^+}\right](z).
\end{equation*}
Of course, this implies that 
\begin{equation*}
	\mathfrak{g}(z)=\int_{a_1}^z\mathfrak{g}'(z')\df z'=\sum_{j=1}^{g+1}2\int_{a_1}^z\mathcal{C}_{[a_j,b_j]}\left[\frac{Q_g}{R^+}\right](z')\df z',
\end{equation*}
where the integration path is taken to be a straight line between $a_1$ and $z$. We approximate $\frac{Q_g(z)}{R^+(z)}\sqrt{z-a_j}\sqrt{z-b_j}$ to high-precision as a Chebyshev-$\TT$ series on $[a_j,b_j]$ using the discrete cosine transform. Letting $w_{\TT,j}(x)=\Tilde w_{\TT}(x;[a_j,b_j])$, $\TT_{j,k}(x)=\Tilde\TT_k(x;[a_j,b_j])$ and dropping the lower bound of integration that will be accounted for later, it thus suffices to consider computing the anti-derivatives
\begin{equation*}
	p_{j,k}(z)=-2\pi \im\alpha_{j,k}\int^z\frac{1}{2\pi \im}\int_{a_j}^{b_j}\frac{\TT_{j,k}(s)w_{T,j}(s)}{s-z'}\df s\df z',
\end{equation*}
where $\alpha_{j,k}$ is the $k$th Chebyshev-$\Tilde\TT$ coefficient of $\frac{Q_g(z)}{R^+(z)}\sqrt{z-a_j}\sqrt{z-b_j}$ on $[a_j,b_j]$. For $k>0$, we swap the order of integration
\begin{equation*}
	p_{j,k}(z)=\alpha_{j,k}\int_{a_j}^{b_j}\TT_{j,k}(s)w_{\TT,j}(s)\log(s-z)\df s.
\end{equation*} 
Let $f(s)=\TT_{j,k}(s)w_{\TT,j}(s)$. Using integration by parts, 
\begin{align*}
	\frac{1}{2\pi \im}\int_{a_j}^{b_j}f(s)\log(s-z)\df s=
	\frac{\log(b_j-z)}{2\pi i}\int_{a_j}^{b_j}f(s)\df s-\mathcal{C}_{[a_j,b_j]}\left[F\right](z),
\end{align*}
where 
\begin{equation*}
	F(s)=\int_{a_j}^{s}f(s')\df s'.
\end{equation*} 
By orthogonality,
\begin{equation*}
	\int_{a_j}^{b_j}f(s)\df s=\int_{a_j}^{b_j}\TT_{j,k}(s)w_{\TT,j}(s)\df s=0,
\end{equation*}
for $k>0$, and\footnote{We use $\diamond$ notation to specify a function without naming it. For example, we use $1/\diamond$ to denote the function $f(x)=1/x.$}
\begin{equation*}
	p_{j,k}(z)=-2\pi \im\alpha_{j,k}\mathcal{C}_{[a_j,b_j]}\left[\int_{a_j}^{\diamond}\TT_{j,k}(s)w_{\TT,j}(s)\df s\right](z).
\end{equation*}
Now, \cite[Eq. 18.9.22]{NIST:DLMF} gives that 
\begin{equation*}
	\frac{\df}{\df x}\left(\sqrt{1-x}\sqrt{x+1}U_{n}(x)\right)=-(n+1)\frac{1}{\sqrt{1-x}\sqrt{x+1}} T_{n+1}(x),
\end{equation*} 
where  $\TT_j(x)=\sqrt{2}T_{j}(x)$ in our convention. Substituting this in and integrating both sides gives
\begin{equation*}
	\int_{-1}^{x}\frac{\TT_k(s)}{\sqrt{1-s}\sqrt{s+1}}\df s=-\frac{\sqrt{2}}{k}U_{k-1}(x)\sqrt{1-x}\sqrt{x+1},
\end{equation*}
and
\begin{equation*}
	\int_{-1}^{x}\TT_k(s)w_T(s)\df s=-\frac{1}{\sqrt{2}k}U_{k-1}(x)w_U(x).
\end{equation*}
Using the change of variables $t=M_j^{-1}(s)$ where $M_j^{-1}(s)=M^{-1}(s;[a_j,b_j])$, 
\begin{align*}
	\int_{a_j}^{x}\TT_{j,k}(s)w_{\TT,j}(s)\df s=\int_{-1}^{M_j^{-1}(x)}\TT_{j,k}(M_j(t))w_{\TT,j}(M_j(t))\frac{b_j-a_j}{2}\df t.
\end{align*}
Recall that by definition 
\begin{equation*}
	\TT_{j,k}(M_j(t))=\TT_{k}(t),
\end{equation*}
and
\begin{align*}
	w_{\TT,j}(M_j(t))=\frac{2}{b_j-a_j}w_\TT(t),
\end{align*}
so that
\begin{equation*}
	\int_{a_j}^{x}\TT_{j,k}(s)w_{\TT,j}(s)\df s=\int_{-1}^{M_j^{-1}(x)} \TT_k(t)w_\TT(t)\df t=-\frac{1}{\sqrt{2}k}\frac{b_j-a_j}{2}{U}_{j,k-1}(x)w_{U,j}(x),
\end{equation*}
where we use the same notation $w_{U,j}(x)=w_{U}(x;[a_j,b_j])$, $U_{j,k}(x)=\Tilde U_k(x;[a_j,b_j])$.
Thus,
\begin{equation*}
	p_{j,k}(z)=\alpha_{j,k}\frac{\pi i}{k}\frac{b_j-a_j}{\sqrt{2}}\mathcal{C}_{[a_j,b_j]}\left[U_{j,k-1}w_{U,j}\right](z),
\end{equation*} 
for $k>0$. For $k=0$,
\begin{align*}
	p_{j,0}(z)&=-2\im\alpha_{j,0}\int^z\frac{1}{2\pi \im}\int_{a_j}^{b_j}\frac{w_{\TT,j}(s)}{s-z'}\df s\df z'=-2i\alpha_{j,0}\int^z\mathcal{C}_{[a_j,b_j]}\left[ \TT_{j,0}w_{\TT,j}\right]\df z'\\&=
	\alpha_{j,0}\int^z\frac{1}{\sqrt{z'-a}\sqrt{z'-b}}\df z',
\end{align*}
which we have computed using the exact formulae for the Cauchy integrals of Chebyshev-$\TT$ polynomials with their respective weight functions. With a change of variables $t=M(z')$ as before, we find \cite{olver_slevinsky_townsend_2020}
\begin{equation}\label{0order}
	p_{j,0}(z)=-\alpha_{j,0}\log J^{-1}_+\left(M_j^{-1}(z)\right).
\end{equation}
Set
\begin{equation}\label{expansion}
	\Phi(z)=\sum_{j=1}^{g+1}\sum_{k=0}^{m_j}p_{j,k}(z),
\end{equation}
where $m_j$ is the number of terms we use in each Chebyshev series. Then, $\mathfrak{g}$ is approximated by
\begin{equation*}
	\mathfrak{g}(z)\approx\Phi(z)-\Phi(a_1).
\end{equation*}
A numerical instability can arise when computing $\Phi(a_1)$ with this approach due to the non-Lipschitz nature of $J^{-1}_+(z)$ at $z=-1$. This can be circumvented by replacing $J^{-1}_+(-1)=-1$ at each instance in the evaluation of $\Phi(a_1)$ that it occurs.

\subsubsection{Asymptotic behavior}
 We compute the logarithmic\footnote{This does not appear in our recurrence formulae but is necessary in \eqref{cauchyints} below. Additionally, $|\mathfrak c|$ is commonly referred to as the capacity of $\cup_j[a_j,b_j]$ \cite{PEHERSTORFER2011814}.} and first order terms $\mathfrak c$ and $\mathfrak{g}_1$ in the asymptotic expansion of $\mathfrak{g}$ \eqref{gasymp}. Recall that $\mathfrak g$ can be numerically computed as the Cauchy integrals of Chebyshev-$U$ polynomials and the logarithm of the inverse Joukowsky map, so it suffices to compute the asymptotics of these functions.
 
The only term in our expansion \eqref{expansion} that contains a logarithmic term is \eqref{0order}.
We have
\begin{equation*}
	\log J^{-1}_+\left(M_j^{-1}(z)\right)=\log\left(\frac{b_j-a_j}{4z}\right)+\frac{a_j+b_j}{2z}+\OO(z^{-2}),
\end{equation*}
as $z\to\infty$, so that
\begin{equation*}
	p_{j,0}(z)=\alpha_{j,0}\log\left(\frac{4}{b_j-a_j}\right)+\alpha_{j,0}\log z-\alpha_{j,0}\frac{a_j+b_j}{2z}+\OO(z^{-2}).
\end{equation*}
The only other contribution to the $\OO(z^{-1})$ term comes from 
\begin{align*}
	p_{j,1}(z)&=\alpha_{j,1}\pi i\frac{b_j-a_j}{\sqrt{2}}\mathcal{C}_{[a_j,b_j]}\left[U_{j,0}w_{U,j}\right](z)\\&=
	\alpha_{j,1}\pi i\frac{b_j-a_j}{\sqrt{2}}\frac{\im}{2\pi}z^{-1}+\OO(z^{-2})=-\alpha_{j,1}\frac{b_j-a_j}{2\sqrt{2}}z^{-1}+\OO(z^{-2}),
\end{align*}
by \eqref{chebasymp}. This gives 
\begin{equation*}
	\mathfrak c\approx\exp\left(\sum_{j=1}^{g+1}\alpha_{j,0}\log\left(\frac{4}{b_j-a_j}\right)-\Phi(a_1)\right).
\end{equation*}
The first order term can be approximated as 
\begin{equation*}
	\mathfrak{g}_1\approx-\sum_{j=1}^{g+1}\left(\alpha_{j,0}\frac{a_j+b_j}{2}+\alpha_{j,1}\frac{b_j-a_j}{2\sqrt{2}}\right).
\end{equation*}

\subsection{Numerically computing $\mathfrak{h}_n$}
To numerically compute $\mathfrak{h}_n$ we first compute $\Delta_1,\ldots,\Delta_n$ according to \eqref{delta} by numerically evaluating $\mathfrak{g}^+(z)+\mathfrak{g}^-(z)$ on the gaps $(b_j,a_{j+1})$. We then obtain $A_1(n),\ldots,A_{g+1}(n)$ in the same manner as we obtained the coefficients for $\mathfrak g$. We do this by numerically computing the integrals via a Chebyshev-$\TT$-based quadrature rule and solving the system \eqref{hsys}.

Once the coefficients are known, we can evaluate $\mathfrak{h}_n$ in the complex plane in accordance with \eqref{hdef} by developing a method to compute the relevant Cauchy integrals. We do so by utilizing the formulae of Theorem \ref{chebints}. Since we know $R(z)$ and our coefficients, we need only numerically evaluate 
\begin{equation*}
	\mathcal{C}_{[a_j,b_j]}\left[\frac{1}{R_+}\right](z),\quad \mathcal{C}_{[b_\ell,a_{\ell+1}]}\left[\frac{1}{R}\right](z),
\end{equation*}
for $j=1,\ldots,g+1$, $\ell=1,\ldots,g$. To do this, let $a = b_j$, $b = a_{j+1}$ or $a = a_j$, $b = b_j$ for some $j$. If we consider a truncated Chebyshev series
\begin{equation*}
	\frac{\sqrt{s-a}\sqrt{b-s}}{R_+(s)}\approx \sum_{j=0}^{p}\beta_j\Tilde \TT_j(s;[a,b]).
\end{equation*} 
Then,
\begin{equation*}
	\mathcal{C}_{[a,b]}\left[\frac{1}{R_+}\right]\approx\mathcal{C}_{[a,b]}\left[\sum_{j=0}^{p}\beta_j(-\pi i)\Tilde \TT_j(\diamond;[a,b]) \Tilde w_\TT(\diamond;[a,b])\right]=-\pi i\sum_{j=0}^{p}\beta_j\mathcal{C}_{[a,b]}\left[\Tilde \TT_j(\diamond;[a,b]) \Tilde w_\TT(\diamond;[a,b])\right].
\end{equation*}
Of course, these Cauchy integrals can be computed via our exact formulae in Theorem \ref{chebints}, so we can approximate $\mathfrak h_n$ by computing a truncated series and taking linear combinations of Cauchy integrals as appropriate.

\subsubsection{First-order asymptotic behavior}
Now, we compute the first order term $\mathfrak h_n^{(1)}$ in \eqref{hasymp}. This is the $\OO(z^{-g-2})$ term of the Cauchy integrals in \eqref{hdef} since $R(z)\sim z^{g+1}$ as $z\to\infty$. To compute this, we  use the asymptotics of Cauchy integrals to find
\begin{equation*}
	\mathfrak h^{(1)}_n=-\sum_{j=1}^{g+1}
	A_j(n)\int_{a_j}^{b_j}\frac{s^{g+1}}{R_+(s)}\df s-\sum_{\ell=1}^{g}\log(\ex^{n\Delta_\ell})\int_{b_\ell}^{a_{\ell+1}}\frac{s^{g+1}}{R(s)}\df s.
\end{equation*}
To compute this term numerically, we  use a Chebyshev-$\TT$-based quadrature rule on each interval to evaluate the integrals.

\subsection{Numerically solving the Riemann--Hilbert problem and recovering coefficients}
Now that we can compute $\mathfrak{g}$ and $\mathfrak{h}_n$ reliably in the complex plane, we develop a method to solve the Riemann--Hilbert problem \eqref{RHPfinal} numerically. We do so by developing methods to solve smaller and simpler problems that we then combine into a block linear system.

\subsubsection{Numerically solving scalar Riemann--Hilbert problems on a circle}
As a first step, we numerically solve a Riemann--Hilbert problem with a jump condition occurring only on the unit circle. Letting $\mathbb{U}$ denote the unit circle in the complex plane oriented counterclockwise, consider the problem of finding a function $\Phi:\compl\setminus\mathbb{U}\to\compl$ that satisfies
\begin{equation}\label{jumpcirc}
\begin{aligned}
	\Phi~\text{is analytic in}~\compl\setminus\mathbb{U},\\
	\Phi^+(z)=\Phi^-(z)f(z),\quad z\in\mathbb{U},\\
	\lim_{z\to\infty}\Phi(z)=1.
\end{aligned} 
\end{equation}
To find a numerical solution, we make the ansatz 
\[
\Phi(z)=1+\mathcal{C}_\mathbb{U}u(z),
\]
for some unknown function $u$.  This ansatz can be rigorously justified, see \cite[Section 2.5]{trogsworldllc}. Plugging this into the jump condition in \eqref{jumpcirc},
\begin{equation}\label{newjumpcirc}
	\mathcal{C}^+_\mathbb{U}u(z)-f(z)\mathcal{C}^-_\mathbb{U}u(z)=f(z)-1.
\end{equation}
Now, we let $u$ be given by a Laurent series
\begin{equation*}
	u(z)=\sum_{j=-\infty}^{\infty}c_jz^j.
\end{equation*}
A simple residue calculation yields that 
\begin{equation}\label{residue}
	\mathcal{C}_\mathbb{U}u(z)=\begin{cases}
		\sum_{j=0}^{\infty}c_jz^j& |z|<1,\\
		-\sum_{j=-\infty}^{-1}c_jz^j& |z|>1.
	\end{cases}
\end{equation}
To form a numerical solution, we truncate our Laurent series so that its terms range from $-N$ to $N$ and consider $2N+1$ collocation points $z_0,\ldots,z_{2N}$ evenly distributed on $\mathbb{U}$ \footnote{For instance, given by $z_k=\ex^{2\pi ik/(2N+1)}$.}. We then determine coefficients $c_{-N},\ldots,c_N$ such that \eqref{newjumpcirc} is satisfied at all collocation points:
\begin{equation}\label{linearsys}
	\sum_{j=0}^{N}c_jz_k^j+f(z_k)\sum_{j=-N}^{-1}c_jz_k^j=f(z_k)-1,
\end{equation}
for $k=0,\ldots,2N$. To solve for the coefficients, we write \eqref{linearsys} as a linear system and solve for $c_{-N},\ldots,c_N$. We then use \eqref{residue} to evaluate our numerical solution $\Phi(z)$ at any $z\in\compl\setminus\mathbb{U}$. One can of course generalize this to numerically solve a scalar Riemann--Hilbert problem on any circle by scaling and shifting appropriately \cite{Olver2012}. 

\subsubsection{Numerically solving scalar Riemann--Hilbert problems on an interval}
Now, we numerically solve the Riemann--Hilbert problem of finding a function $\Phi:\compl\setminus\mathbb{I}\to\compl$ that satisfies
\begin{equation*}
\begin{aligned}
	&\Phi~\text{is analytic in}~\compl\setminus\mathbb{I},\\
	&\Phi^+(z)=\Phi^-(z)f(z),\quad z\in\mathbb{I},\\
	&\lim_{z\to\infty}\Phi(z)=1, 
\end{aligned} 
\end{equation*}
where $\mathbb{I}=[-1,1]$. We again make the ansatz 
\[
\Phi(z)=1+\mathcal{C}_\mathbb{I}u(z),
\]
but need a different approach to determine $u$. As with most numerical analysis problems on the unit interval, we cannot safely use a monomial basis and equally-spaced points as in the problem on the unit circle. Instead, we need to use an orthogonal polynomial basis, meaning that we will need to use the Cauchy integrals of orthogonal polynomials
when enforcing our jump condition of the form \eqref{newjumpcirc} but on $\mathbb{I}$. We choose $2N+1$ collocation points to be more concentrated near the endpoints as the roots of Chebyshev-$\TT$, i.e., we set
\[
z_k=\cos\left(\frac{(2k+1)\pi}{2(2N+1)}\right),\quad k=0,\ldots,2N.
\]
If we use an orthogonal polynomial basis $p_0,\ldots,p_{2N}$ with associated weight function $w$, then we find coefficients $d_0,\ldots,d_{2N}$ such that the jump condition holds at all collocation points:
\begin{equation}\label{linearsysint}
	\sum_{j=0}^{2N}d_j\mathcal{C}_\mathbb{I}^+[p_jw](z_k)-f(z_k)\sum_{j=0}^{2N}d_j\mathcal{C}_\mathbb{I}^-[p_jw](z_k)=f(z_k)-1,
\end{equation}
for $k=0,\ldots,2N$. This is a linear system and can (potentially) be solved for $d_0,\ldots,d_{2N}$ by computing $\mathcal{C}_\mathbb{I}^+[p_jw](z_k),\mathcal{C}_\mathbb{I}^-[p_jw](z_k)$ as described in accordance with Theorem \ref{chebints} if we choose to use a Chebyshev basis for $p_j$, $j=0,\ldots,2N$. Of course, this problem can be shifted and scaled to an arbitrary interval $[a,b]$ \cite{Olver2012}.

While solving this linear system appears straightforward, a potential difficulty arises in choosing the orthogonal polynomial basis used to build the numerical solution. If the correct basis is not chosen, the method will not converge. The Cauchy integrals of different orthogonal polynomial bases have different asymptotic behavior near the endpoints $z\to\pm1$, and therefore different choices of bases will produce functions that behave very differently. If one wishes to pose a Riemann--Hilbert problem with a unique solution, additional asymptotic conditions must be prescribed at the endpoints. Then, the correct orthogonal polynomial basis associated with that asymptotic condition must be selected.

\subsubsection{Combining methods and solving the orthogonal polynomial problem}
Consider solving a Riemann--Hilbert problem on the contour $\Gamma=\Gamma_1\cup\Gamma_2$ where $\Gamma_1,\Gamma_2$ are disjoint contours: Find a function $\Phi:\compl\setminus\Gamma\to\compl$ that satisfies
\begin{equation*}
\begin{aligned}
	&\Phi~\text{is analytic in}~\compl\setminus\Gamma,\\
	&\Phi^+(z)=\Phi^-(z)f_1(z),\quad z\in\Gamma_1,\\
	&\Phi^+(z)=\Phi^-(z)f_2(z),\quad z\in\Gamma_2,\\
	&\lim_{z\to\infty}\Phi(z)=1.
\end{aligned} 
\end{equation*}
Define the projection operator $P_j$ by
\begin{equation*}
P_ju(z)=\begin{cases}
	u(z) & z\in\Gamma_j,\\
	0 & \text{otherwise}.
\end{cases}
\end{equation*}
The ansatz becomes
\[
\Phi(z)=1+\mathcal{C}_\Gamma u_1(z)+\mathcal{C}_\Gamma u_2(z),
\]
where $u_j=P_ju$ for some unknown $u$. Noting that $\mathcal{C}_{\Gamma}=\mathcal{C}_{\Gamma_1}+\mathcal{C}_{\Gamma_2}$, the jump condition becomes
\begin{equation*}
P_jC_{\Gamma_j}^+u_j+P_jC_{\Gamma_k}^+u_k-f_j(P_jC_{\Gamma_j}^-u_j+P_jC_{\Gamma_k}^-u_k)=f_j-1,
\end{equation*}
for $j,k\in\{1,2\}$, $j\neq k$. This gives the block system
\begin{equation}\label{blocksys}
	\begin{pmatrix}P_1C_{\Gamma_1}^+-f_1P_1C_{\Gamma_1}^- &P_1C_{\Gamma_2}-f_1P_1C_{\Gamma_2}\\
		P_2C_{\Gamma_1}-f_2P_2C_{\Gamma_1} &P_2C_{\Gamma_2}^+-f_2P_2C_{\Gamma_2}^-
	\end{pmatrix}\begin{pmatrix}
	u_1\\u_2
\end{pmatrix}=\begin{pmatrix}
f_1-1\\f_2-1
\end{pmatrix},
\end{equation}
where $P_jC_{\Gamma_k}^+=P_jC_{\Gamma_k}^-=P_jC_{\Gamma_k}$ when $j\neq k$. We expand $u_1,u_2$ in the usual way\footnote{It is expanded using either a Laurent series or an appropriate orthogonal polynomial basis depending on if $\Gamma_1,\Gamma_2$ are intervals or circles.} and choose collocation points for each on $\Gamma_1$ and $\Gamma_2$, respectively. Then, we enforce the jump conditions at the collocation points as a block linear system via the matrix structure \eqref{blocksys} and solve for the basis coefficients for both $u_1$ and $u_2$. A block system can be formulated analogously when $\Gamma$ is composed of an arbitrary number of disjoint contours. In this case, the $(j,k)$th entry in the block matrix is given by $P_jC_{\Gamma_k}^+-f_jP_jC_{\Gamma_k}^-$.

We can extend these methods to a matrix Riemann--Hilbert problem for $\mathbf{\Phi}:\compl\setminus\Gamma\to\compl^{2\times2}$
\begin{align*}
	&\mathbf{\Phi}~\text{is analytic in}~\compl\setminus\Gamma,\\
	&\mathbf{\Phi}^+(z)=\mathbf{\Phi}^-(z)\bF(z),\quad z\in\Gamma,\\
	&\lim_{z\to\infty}\mathbf{\Phi}(z)=\bI.
\end{align*} 
If one considers the ansatz 
\begin{equation}\label{matrixansatz}
\mathbf{\Phi}(z)=\bI+\mathcal{C}_\Gamma \bU(z),
\end{equation}
where $\bU$ is now matrix-valued and the Cauchy operator is taken element-wise. We find a system of the form
\begin{equation*}
	\begin{pmatrix}\mathcal C_\Gamma^+-\bF_{11}\mathcal C_\Gamma^- &-\bF_{21}\mathcal C_\Gamma^-\\-\bF_{12}\mathcal C_\Gamma^- &\mathcal C_\Gamma^+-\bF_{22}\mathcal C_\Gamma^-\end{pmatrix}\begin{pmatrix}\bU_{11} &\bU_{21}\\\bU_{12} &\bU_{22}\end{pmatrix}=\bF^T-\bI.
\end{equation*}
We solve this using our methods for scalar problems by forming a block system. Solving a matrix problem on multiple domains will require forming a block system of block systems \cite{Olver2012}. The equations for $\bU_{11},\bU_{12}$ and $\bU_{21},\bU_{22}$ decouple, so $\bU$ can be solved for row-wise.

The pieces are now in place to numerically solve the Riemann--Hilbert problem \eqref{RHPfinal}.
As a simplifying assumption, we take each $C_j$ in Figure \ref{regions} to be a circle centered at $(a_j+b_j)/2$ with a diameter greater than $b_j-a_j$\footnote{Heuristically, we find that the diameter should be at least $5(b_j-a_j)/4$ to avoid stability issues.} but small enough so that $C_j$ and $C_k$ do not intersect when $j\neq k$\footnote{To be fully robust, one should take $C_j$ to be a Bernstein ellipse to which $1/h_j(z)$ is analytically continuable, but this requires a method to numerically solve Riemann--Hilbert problems on such an ellipse.}. Then, \eqref{RHPfinal} can be solved via a block linear system according to our methods for circles, intervals, multiple domains, and matrix problems by accounting for shifting and scaling things to the unit interval and circle as appropriate. The only remaining difficulty lies in choosing orthogonal polynomial bases so that the asymptotic conditions at the endpoints are satisfied.

On each interval $[a_j,b_j]$, we choose orthogonal polynomial bases based on the endpoint asymptotic conditions from \eqref{RHPfinal}:
\begin{equation*}
\begin{aligned}
	&\Tilde\bS_n(z)=\OO\begin{pmatrix}
		1+|z-a_j|^{-\alpha_j/2} &1+|z-a_j|^{\alpha_j/2}\\
		1+|z-a_j|^{-\alpha_j/2} &1+|z-a_j|^{\alpha_j/2}
	\end{pmatrix},\quad z\to a_j,\\
	&\Tilde\bS_n(z)=\OO\begin{pmatrix}
		1+|z-b_j|^{-\beta_j/2} &1+|z-b_j|^{\beta_j/2}\\
		1+|z-b_j|^{-\beta_j/2} &1+|z-b_j|^{\beta_j/2}
	\end{pmatrix},\quad z\to b_j.
\end{aligned}
\end{equation*}
Because the weight functions have asymptotic behavior
\begin{align*}
w_{\TT,j}(z)&=\OO(|z-a_j|^{-1/2}),\quad &z\to a_j,\quad&w_{\TT,j}(z)=\OO(|z-b_j|^{-1/2}),&\quad &z\to b_j,\\
w_{U,j}(z)&=\OO(|z-a_j|^{1/2}),\quad &z\to a_j,\quad&w_{U,j}(z)=\OO(|z-b_j|^{1/2}),&\quad &z\to b_j,\\
w_{V,j}(z)&=\OO(|z-a_j|^{1/2}),\quad &z\to a_j,\quad&w_{V,j}(z)=\OO(|z-b_j|^{-1/2}),&\quad &z\to b_j,\\
w_{W,j}(z)&=\OO(|z-a_j|^{-1/2}),\quad &z\to a_j,\quad&w_{W,j}(z)=\OO(|z-b_j|^{1/2}),&\quad &z\to b_j,
\end{align*}
we choose bases as follows:
\begin{itemize}
\item{If $\alpha_j,\beta_j=-1$, then we use $\mathcal{C}_{[a_j,b_j]}[U_{j,k}w_{U,j}]$ in the first column of the matrix ansatz $\bU$ \eqref{matrixansatz} and $\mathcal{C}_{[a_j,b_j]}[\TT_{j,k}w_{\TT,j}]$ in the second column of the ansatz.}
\item{If $\alpha_j,\beta_j=1$, then we use $\mathcal{C}_{[a_j,b_j]}[\TT_{j,k}w_{\TT,j}]$ in the first column and $\mathcal{C}_{[a_j,b_j]}[U_{j,k}w_{U,j}]$ in the second column.}
\item{If $\alpha_j=1,\beta_j=-1$, then we use $\mathcal{C}_{[a_j,b_j]}[W_{j,k}w_{W,j}]$ in the first column and $\mathcal{C}_{[a_j,b_j]}[V_{j,k}w_{V,j}]$ in the second column.}
\item{If $\alpha_j=-1,\beta_j=1$, then we use $\mathcal{C}_{[a_j,b_j]}[V_{j,k}w_{V,j}]$ in the first column and $\mathcal{C}_{[a_j,b_j]}[W_{j,k}w_{W,j}]$ in the second column.}
\end{itemize}

\subsubsection{Recovering coefficients}
Once a numerical solution to \eqref{RHPfinal} is obtained, we compute $\Tilde \bS_n^{(1)}$ by summing the first order terms of each approximate function in the block system. 
More explicitly, if the $(\ell,m)$ entry of $\Tilde\bS_n$ has a Laurent series representation
\begin{equation*}
\sum_{k=-N_j}^{N_j}c_{j,k}\left(\frac{z-q_j}{r_j}\right)^k,
\end{equation*}
on each $C_j$ where $q_j=(a_j+b_j)/2$ and $r_j$ denotes the radius of $C_j$ and an orthogonal polynomial series representation 
\begin{equation*}
\sum_{k=0}^{2L_j}d_{j,k}\mathcal{C}_{[a_j,b_j]}\left[p_kw\right],
\end{equation*}
on each $[a_j,b_j]$, then
\begin{equation*}
\left(\Tilde\bS_{n}^{(1)}\right)_{\ell m}=\sum_{j=1}^{g+1}\left(r_jc_{j,-1}+\frac{\im}{2\pi}d_{j,1}\right),
\end{equation*}
where the scaling follows from \eqref{chebasymp} if $p_k$ and $w$ are a shifted and scaled Chebyshev polynomial and its weight, respectively. This combined with our computed first order contributions of $\mathfrak{g}$ and $\mathfrak{h}_n$ from earlier allows us to recover our recurrence coefficients from \eqref{realrecurr} as
\begin{equation*}
	\begin{aligned}
		a_n&=\left(\Tilde \bS_{n}^{(1)}\right)_{11}-\left(\Tilde \bS_{n+1}^{(1)}\right)_{11}-\mathfrak h_n^{(1)}+\mathfrak h_{n+1}^{(1)}-\mathfrak g_1,\\
		b_n&=\sqrt{\left(\Tilde \bS_{n+1}^{(1)}\right)_{12}\left(\Tilde \bS_{n+1}^{(1)}\right)_{21}}.
	\end{aligned}
\end{equation*}

\section{Examples and applications}\label{examp}
\subsection{The single interval case}\label{singint}
As a first example, we consider our methods applied to a single interval for which recurrence coefficients are known asymptotically. For simplicity, we consider this interval to be $[-1,1]$. It turns out that the functions $\mathfrak{g}$ and $\mathfrak{h}_n$ can be computed exactly in this case, so solving the Riemann--Hilbert problem \eqref{RHPfinal} numerically becomes simpler. First, we observe that there are no gaps between intervals on which jump conditions can be induced, so the function $\mathfrak h_n$ is identically zero for all $n$. Furthermore, determining $\mathfrak g'$ becomes trivial, and the exponential of $\mathfrak g$ can be computed explicitly as \cite{Kuijlaars2003}
\begin{equation*}
\ex^{\mathfrak g(z)}=\varphi(z)=z+\sqrt{z+1}\sqrt{z-1},
\end{equation*}
and the weight function is given by 
\[
w(x)=h_1(x)\left(\sqrt{x+1}\right)^{\alpha_1}\left(\sqrt{1-x}\right)^{\beta_1},
\]
and $h_1$ satisfies the usual assumptions of being positive on $[-1,1]$ and analytic in a neighborhood of it \cite{KUIJLAARS2004337}.
We additionally find that the first-order correction $\mathfrak g_1=0$ in this case, meaning that all correction terms in the recurrence coefficient formulae \eqref{realrecurr} drop out.

As with the multiple interval problem, we choose $C_1=\{z:|z|=5/4\}$, restricting our choice of $h$ to functions that are positive on $[-1,1]$ and whose reciprocals are analytic in an open set containing $\bar D_1$. For our example, we choose
\begin{equation}\label{hexample}
h_1(x)=\frac{\ex^x+1}{4+x^2},
\end{equation}
and $\alpha_1=\beta_1=1$ to obtain a Chebyshev-$U$-like weight. We first compute the first 51 coefficients of these orthogonal polynomials using 160 collocation points on $C_1$ and 16 collocation points on $[-1,1]$. We should expect these coefficients $a_j,b_j$ to tend to the Chebyshev-$U$ coefficients ($a_j=0,b_j=1/2$) exponentially as $j\to\infty$ \cite{geronimo}. In Figure \ref{expconv}, we plot these differences $|a_j|$ and $|b_j-1/2|$ for the first 51 computed coefficients that exhibit the expected behavior.\\
\begin{figure}
	\centering
	\begin{subfigure}{0.495\linewidth}
		\centering
		\includegraphics[width=\linewidth]{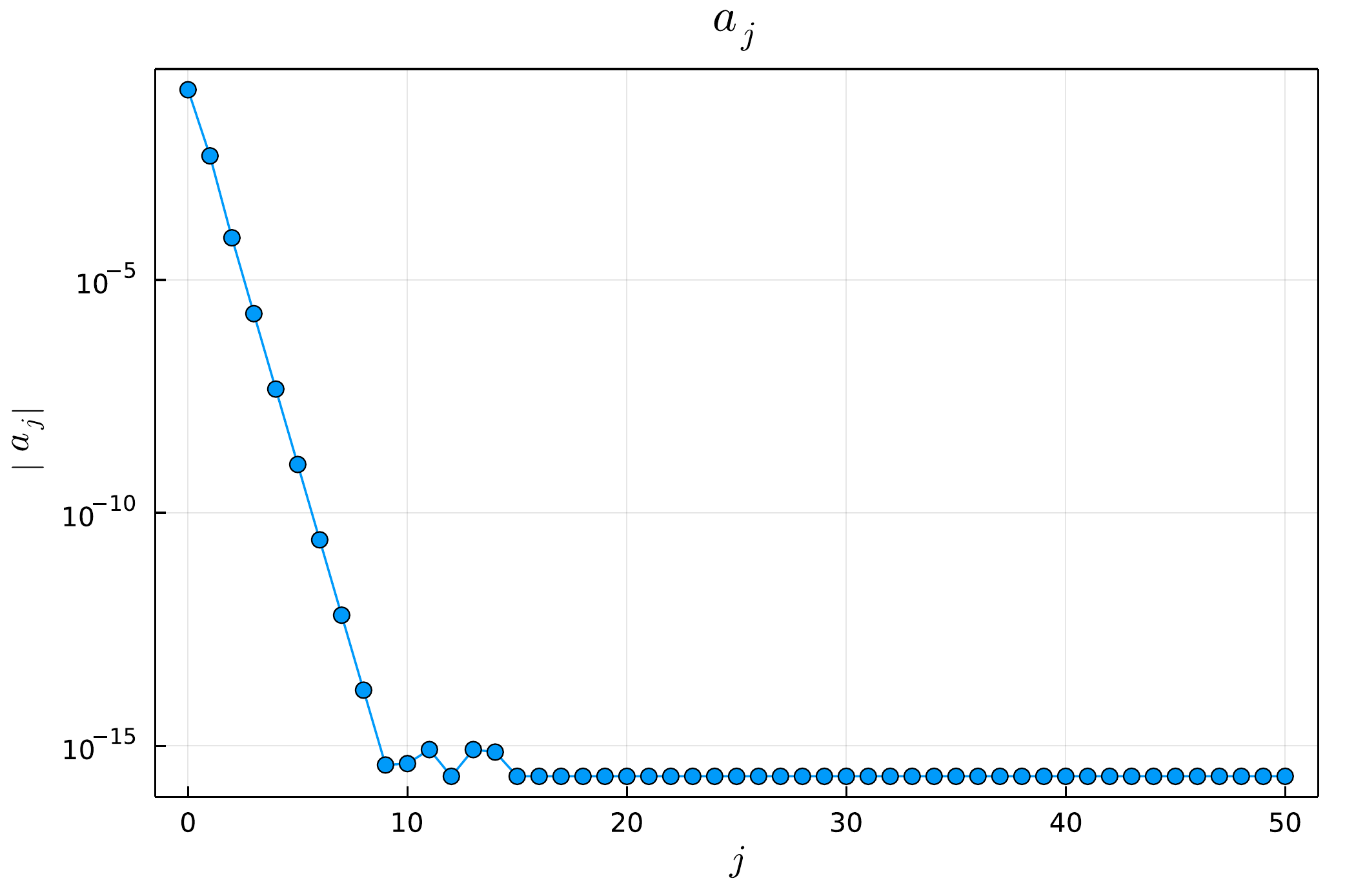}
	\end{subfigure}
	\begin{subfigure}{0.495\linewidth}
		\centering
		\includegraphics[width=\linewidth]{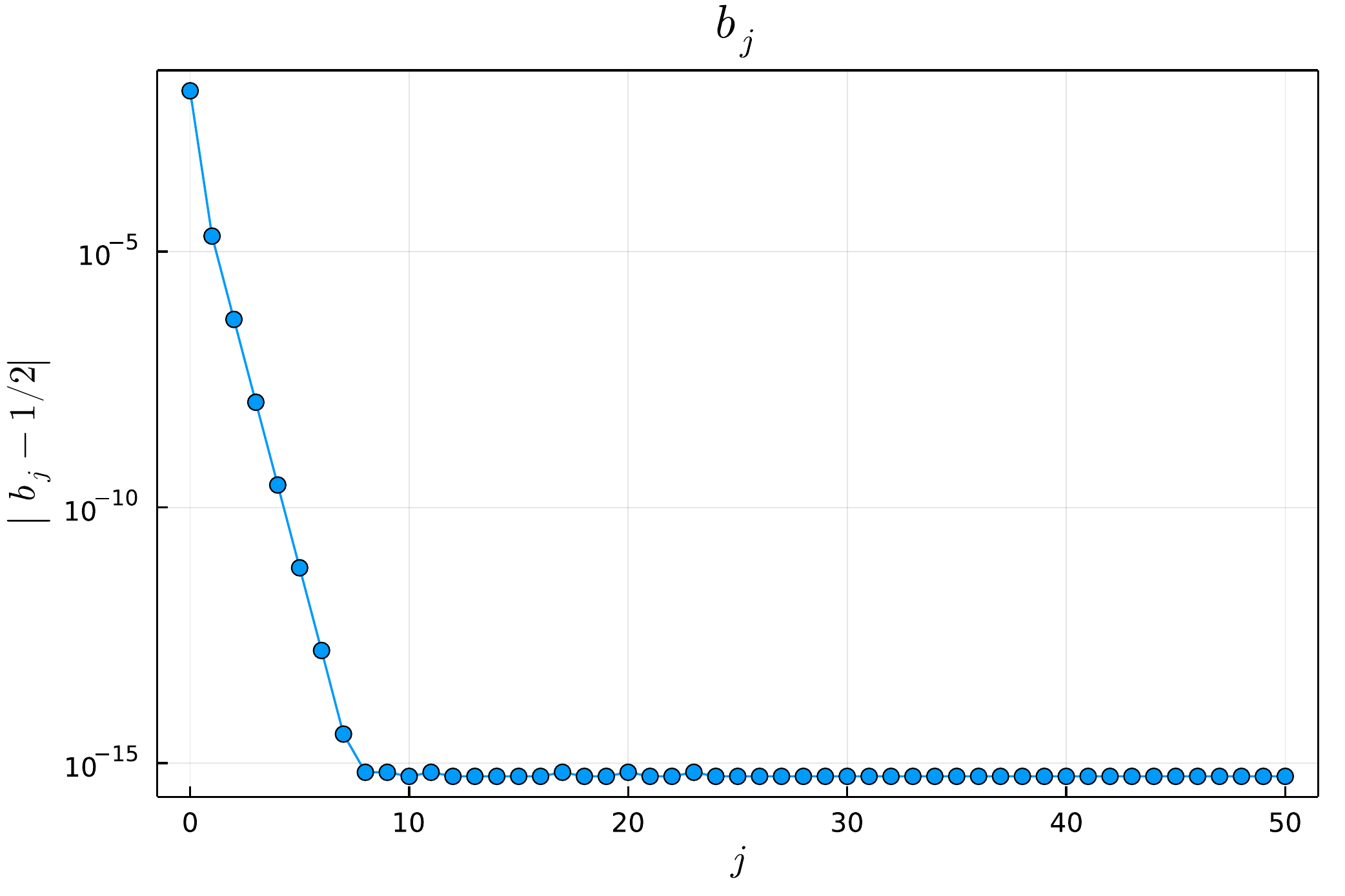}
	\end{subfigure}
\caption{Plots of the difference between the first 51 orthogonal polynomial coefficients of a Chebyshev-$U$ weight modified with \eqref{hexample} and an unmodified Chebyshev-$U$ weight.}
\label{expconv}
\end{figure}

Now, we present an alternative Householder method for computing recurrence coefficients, to which the accuracy of our method is compared. Consider a discrete $N$-point approximation to the weight $w$ with nodes $x_1,\ldots,x_N$ and weights $w_1,\ldots,w_N$, i.e.,
\begin{equation*}
\int_\Sigma f(x)w(x)\approx\sum_{j=1}^Nf(x_j)w_j.
\end{equation*} 
Define 
\begin{equation*}
\bd=\begin{pmatrix}
	\sqrt{\omega_1}\\
	\vdots\\
	\sqrt{\omega_N}
\end{pmatrix},\quad
\bLambda=\begin{pmatrix}
	x_1&0&\cdots&0\\
	0&x_2&\cdots&\vdots\\
	\vdots&\vdots&\ddots&\vdots\\
	0&0&\cdots&x_n
\end{pmatrix}.
\end{equation*} Then, from \cite[Eq. 2.2.24]{gautschi}, if the nodes and weights are selected according to the Gauss quadrature rule for $w$, we have a factorization of the form
\begin{equation*}
\begin{pmatrix}
1 &\bzero^T\\
\bzero &\bQ
\end{pmatrix}\begin{pmatrix}
1 &\bd^T\\
\bd^T &\bLambda
\end{pmatrix}\begin{pmatrix}
1 &\bzero^T\\
\bzero &\bQ^T
\end{pmatrix}=\begin{pmatrix}
1&b_{-1}\be_1^T\\
b_{-1}\be_1 &\bJ_N(w)
\end{pmatrix},
\end{equation*}
where $\be_1$ is the first unit vector and $\bJ_N(w)$ is the $N\times N$ principal subblock of $\bJ(w)$. If one can compute $\bQ$, this allows for the recovery of $\bJ_N(w)$; the Householder algorithm \cite{householder} is a stable way to do this. Due to the complexity of the Householder algorithm, this approach requires $\OO(N^3)$ arithmetic operations to compute the first $N$ recurrence coefficients. An algorithm based on Givens rotations with $\OO(N^2)$ arithmetic operations is presented in \cite{Gragg1984}.  An important distinction between these two methods and the Stieltjes procedure is that these two methods require a priori knowledge of the desired recurrence length $N$, and altering this a posteriori requires restarting the entire algorithm. 

In Figure \ref{1interr}, we plot the error at each of the first 51 coefficients for various numbers of collocation points on the $[-1,1]$, always choosing the number of collocation points on $C_1$ to be 10 times the number on $[-1,1]$.
\begin{figure}
	\centering
	\begin{subfigure}{0.95\linewidth}
		\centering
		\includegraphics[width=\linewidth]{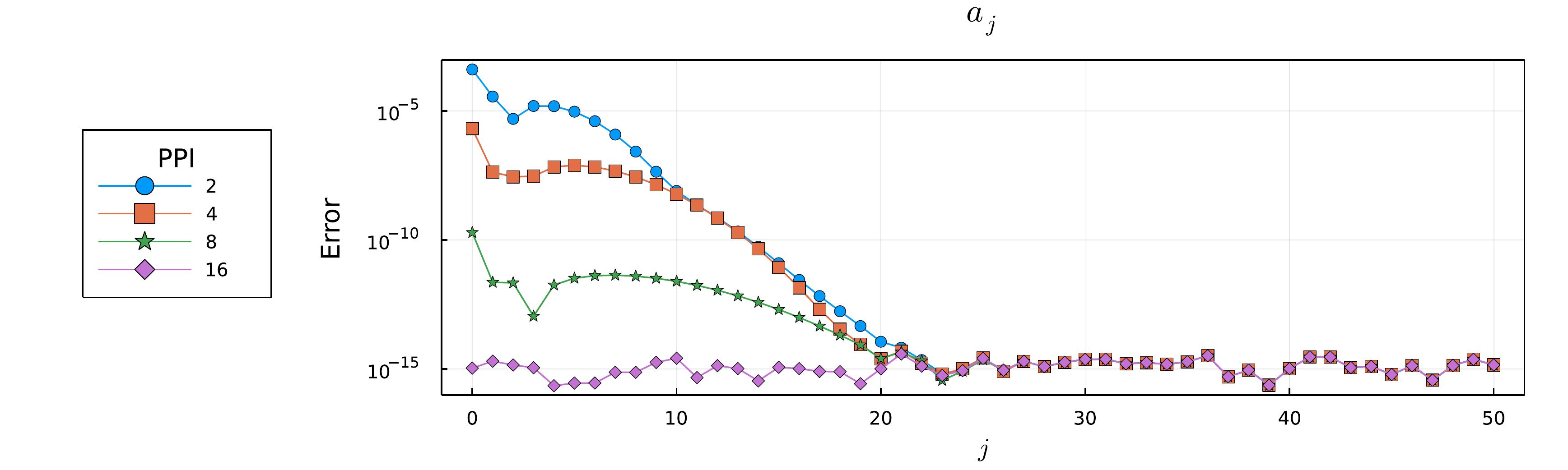}
	\end{subfigure}
	\begin{subfigure}{0.95\linewidth}
		\centering
		\includegraphics[width=\linewidth]{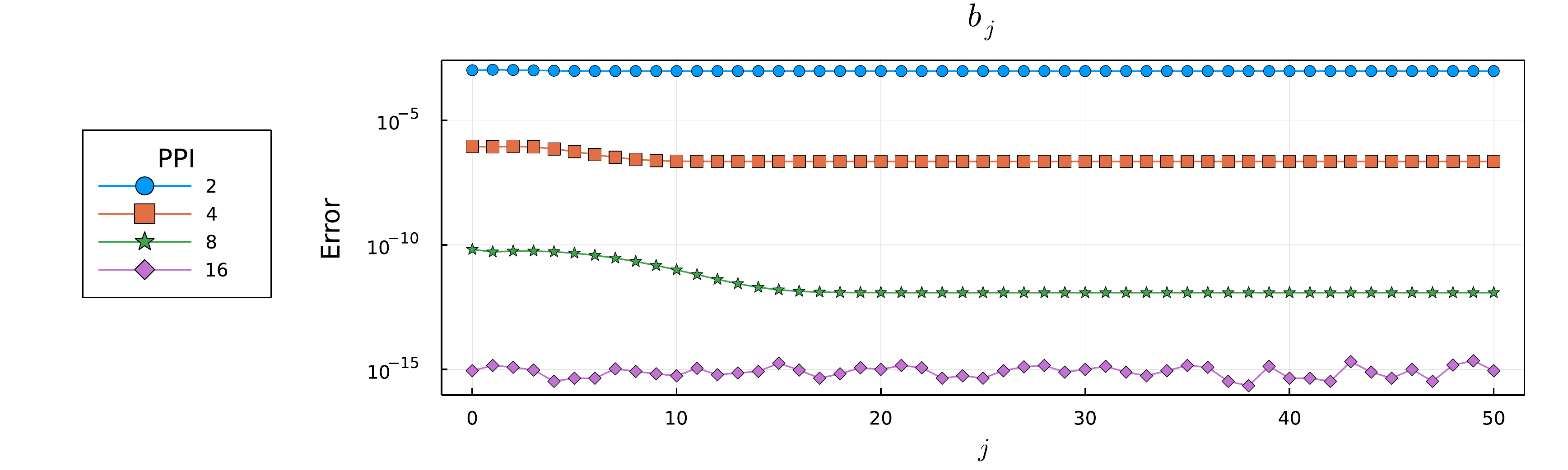}
	\end{subfigure}
	\caption{Absolute error in the computed first 51 orthogonal polynomial coefficients for a Chebyshev-$U$ weight modified with \eqref{hexample} for 2, 4, 8, and 16 collocation points per interval (PPI) on $[-1,1]$.}
	\label{1interr}
\end{figure}
The method is able to compute the recurrence coefficients $a_j,b_j$ starting at any choice of $j$ without first computing $a_k,b_k$ for $k < j$. To demonstrate this, we include a similar plot showing the errors in computing $a_j$ and $b_j$ for $j=1000,\ldots,1020$ for various numbers of collocation points in Figure \ref{1interrasymp}.\\
\begin{figure}
	\centering
	\begin{subfigure}{0.95\linewidth}
		\centering
		\includegraphics[width=\linewidth]{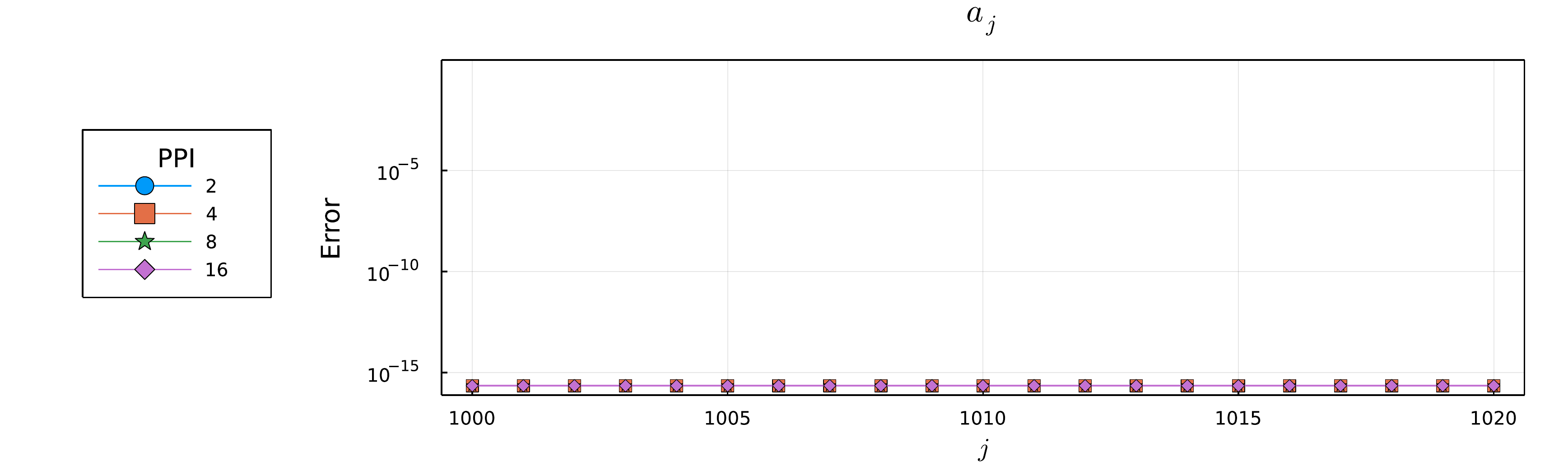}
	\end{subfigure}
	\begin{subfigure}{0.95\linewidth}
		\centering
		\includegraphics[width=\linewidth]{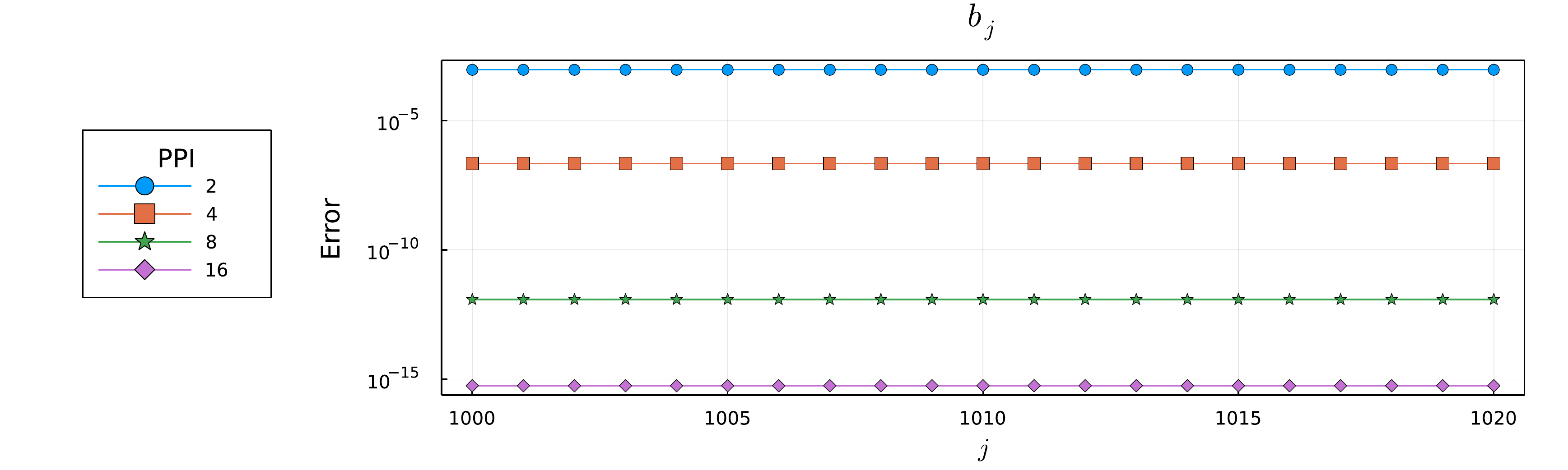}
	\end{subfigure}
	\caption{Error in the computed $a_j,b_j$ for $j=1000,\ldots,1020$ for a Chebyshev-$U$ weight modified with \eqref{hexample} for 2, 4, 8, and 16 collocation points per interval (PPI) on $[-1,1]$ . In this case, $a_j=0$, and the method approximates it well.}
	\label{1interrasymp}
\end{figure}

\subsection{Multiple intervals}\label{sect:multi}
Now, the accuracy of our method is demonstrated in computing with orthogonal polynomials on multiple intervals. In all examples, we will take $h_j(x)=1$ for all $j$, for simplicity, and compare our computed coefficients against those computed by the Householder method described in Section \ref{singint}. 

In Figure \ref{2interr}, we consider orthogonal polynomials defined on $[-1.8, -1]\cup[2, 3]$ with a Chebyshev-$\TT$-like weight on each ($\alpha_j=\beta_j=-1$ for $j=1,2$). 
\begin{figure}
	\centering
	\begin{subfigure}{0.95\linewidth}
		\centering
		\includegraphics[width=\linewidth]{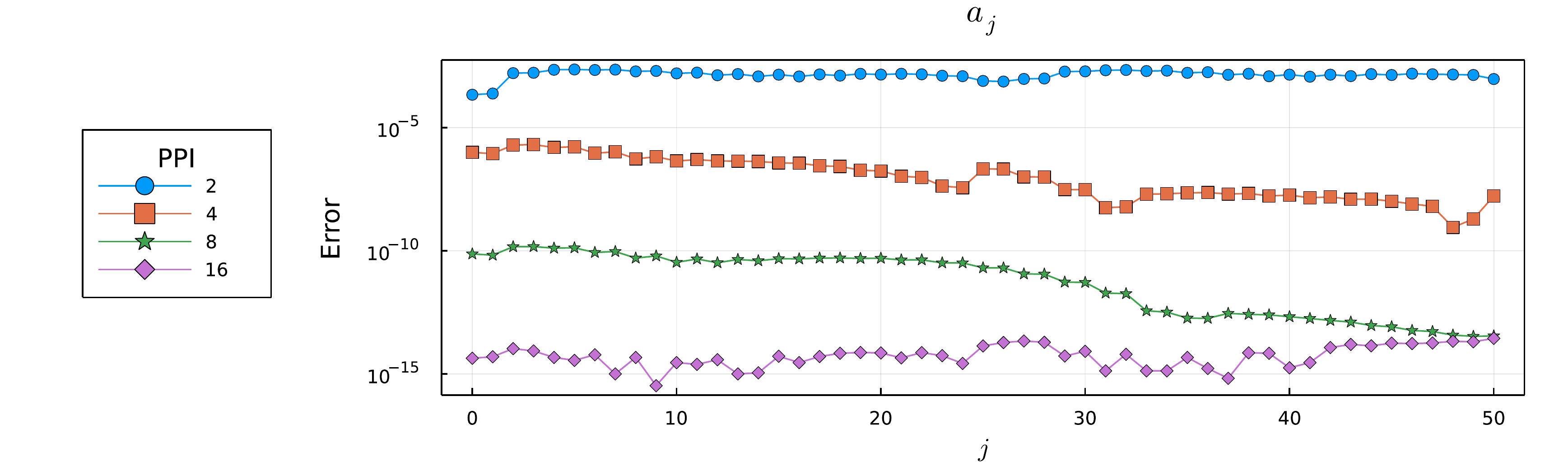}
	\end{subfigure}
	\begin{subfigure}{0.95\linewidth}
		\centering
		\includegraphics[width=\linewidth]{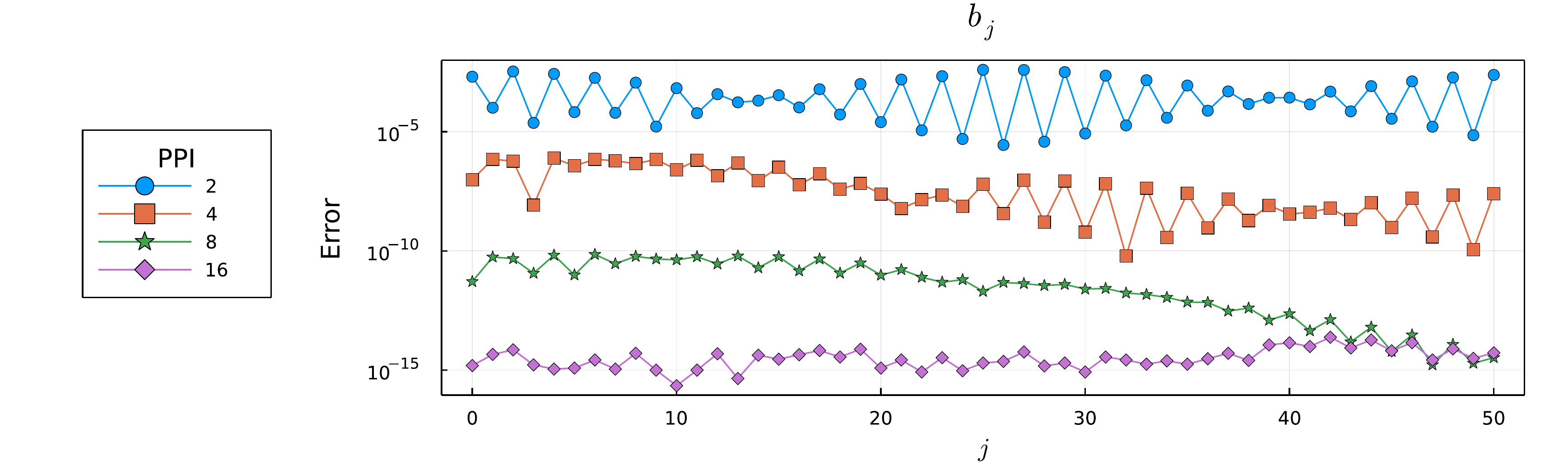}
	\end{subfigure}
	\caption{Error in the computed first 51 orthogonal polynomial coefficients for a Chebyshev-$\TT$-like weight on $[-1.8, -1]\cup[2, 3]$ for 2, 4, 8, and 16 collocation points per interval (PPI). The convergence for large $j$ in the 8 collocation point case suggests that ratio of collocation points on circles to points on intervals is not optimized. Computing these coefficients requires 31.44 seconds via the Householder method described in Section \ref{singint}, and 0.13, 0.12, 0.30, and 0.97 seconds, respectively, via our method.}
	\label{2interr}
\end{figure}
Furthermore, in Figure \ref{timings}, we include a timing comparison of our method to an optimized $\OO(N^2)$ algorithm given as RKPW in \cite{Gragg1984} and \texttt{lanczos.m} in \cite{gautschi}.
\begin{figure}
	\centering
        \includegraphics[scale=0.5]{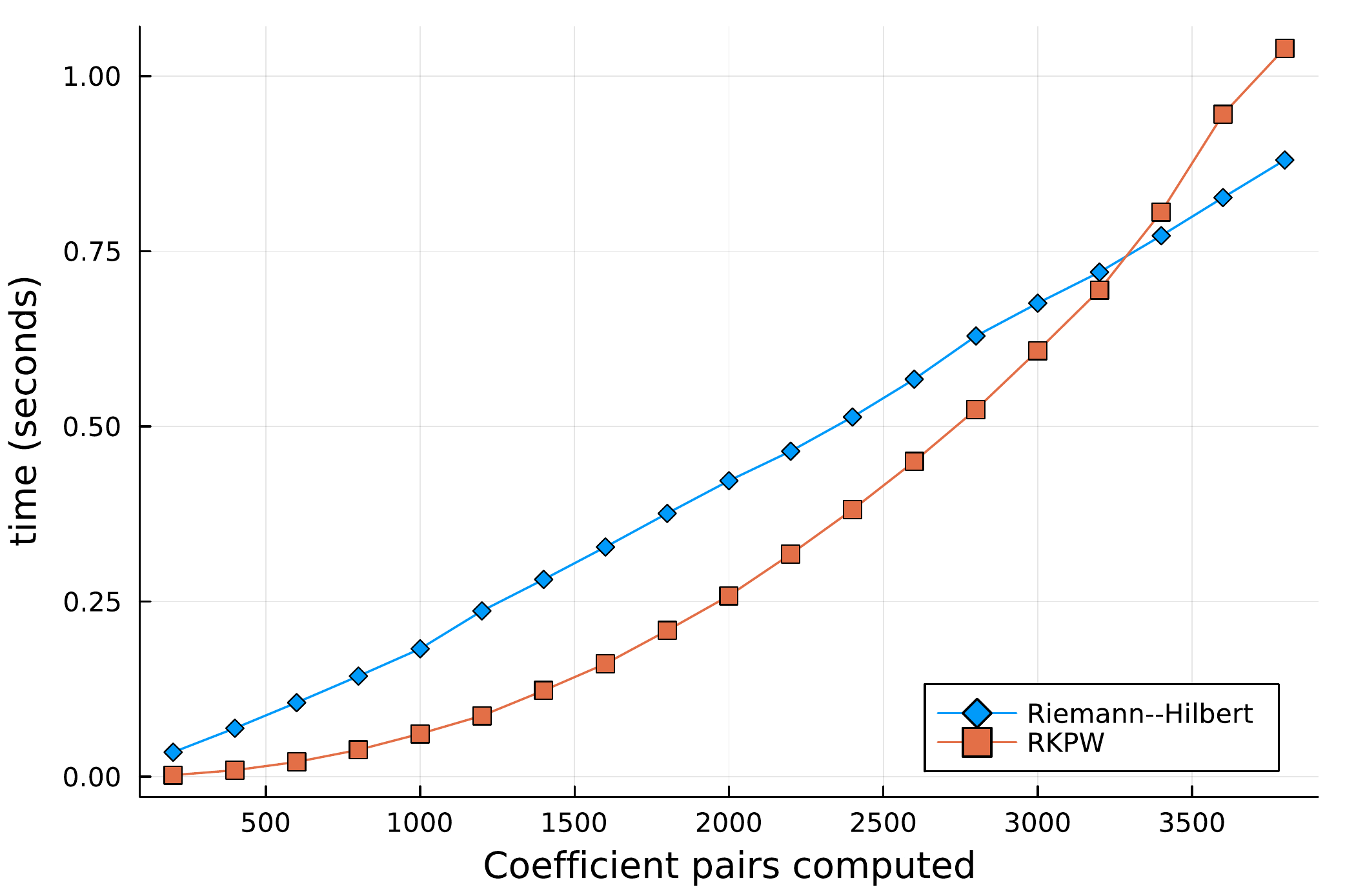}
	\caption{Timing comparison of the RKPW method \cite{Gragg1984} and our approach with only the jump conditions on the intervals considered. We observe that our method begins to outperform the RKPW method once around 3200 pairs of recurrence coefficients are computed.}
	\label{timings}
\end{figure}
Since we are concerned with asymptotic timings for this comparison, our method is applied with only the jump conditions on the intervals as mentioned in Section \ref{sect:recurr}. As is also mentioned in this section, there exist classes of weight functions for which the circles $C_j$ are not needed for any polynomial degree, and we will explore this further in future work. We observe that our method begins to outperform the RKPW method once around 3200 pairs of recurrence coefficients need to be computed; however, since the RKPW method is heavily optimized while our method is coded suboptimally, we expect the threshold to be much lower with a better implementation. The RKPW method also has the aforementioned weakness in that it requires a priori knowledge of the desired recurrence length $N$, and altering this a posteriori requires potentially restarting the entire algorithm. Thus, in practice, one should consider using the RKPW method to compute the first few coefficients and our method asymptotically.

In Figure \ref{3interr}, we consider orthogonal polynomials defined on $[0.1, 1.1]\cup[2, 3]\cup[3.5, 4]$ with a Chebyshev-$V$-like weight on each ($\alpha_j=1,\beta_j=-1$ for $j=1,2,3$). Finally, in Figure \ref{4interr}, we consider orthogonal polynomials defined on $[-3.2, -2.2]\cup[0.1, 1.1]\cup[2, 3]\cup[3.5, 4]$ with a Chebyshev-$\TT$-like weight on $[-3.2, -2.2]$, a Chebyshev-$U$-like weight on $[0.1, 1.1]$, a Chebyshev-$V$-like weight on $[2,3]$, and a Chebyshev-$W$-like weight on $[3.5,4]$ ($\alpha_1=\beta_1=-1$, $\alpha_2=\beta_2=1$, $\alpha_3=1,\beta_3=-1$, $\alpha_4=-1,\beta_4=1$). In each example, we use the same number (2, 4, 8, and 16) of collocation points on each interval and 10 times that number on each circle $C_j$ and compute the first 51 recurrence coefficients. In practice, one should consider using a different number of collocation points on each interval and varying the ratio of collocation points on circles to points on intervals to optimize runtime.

Heuristically, our method as described encounters stability issues once $g$ reaches 4 or 5; however, 6 digits of accuracy is maintained until about $g=11$. This is because the linear systems \eqref{gsys} and \eqref{hsys} become ill-conditioned as $g$ increases. If one wishes to consider larger $g$, these instabilities can be circumvented by instead building linear systems with the basis in \cite[Eq. 28]{kdvgenus}.

In all figures, we observe rapid convergence to the ``ground truth'' coefficients $a_j,b_j$ for any value of $j$. Errors on the order of machine precision are always achieved when we use 16 collocation points on each interval and 160 points on each circle. Oftentimes, less collocation points are needed to achieve this accuracy. 
\begin{figure}
	\centering
	\begin{subfigure}{0.95\linewidth}
		\centering
		\includegraphics[width=\linewidth]{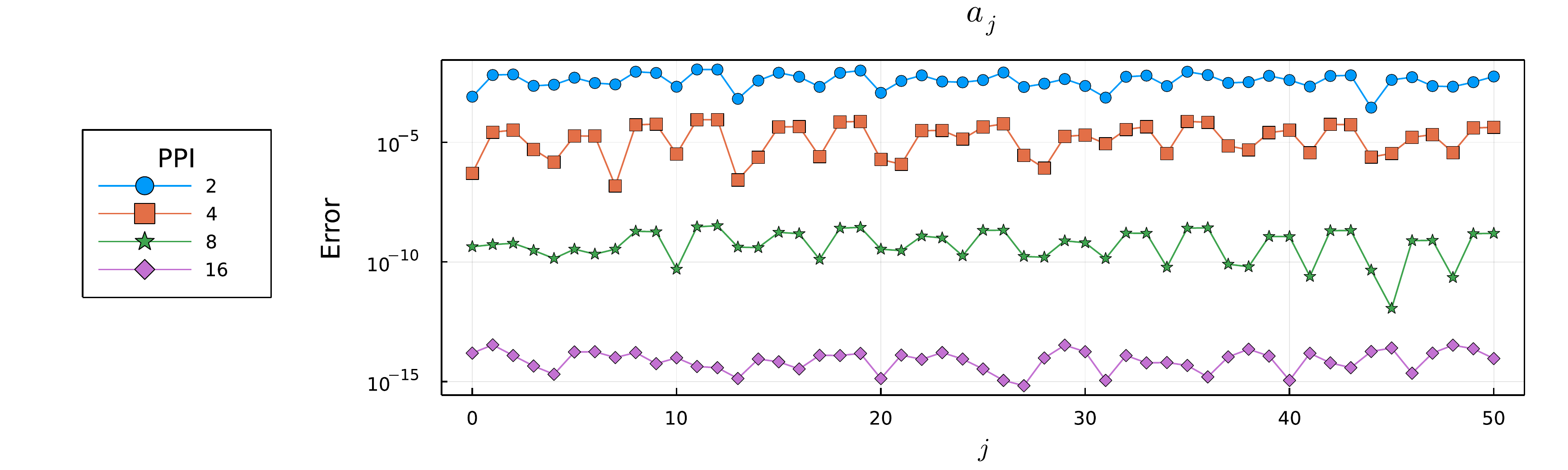}
	\end{subfigure}
	\begin{subfigure}{0.95\linewidth}
		\centering
		\includegraphics[width=\linewidth]{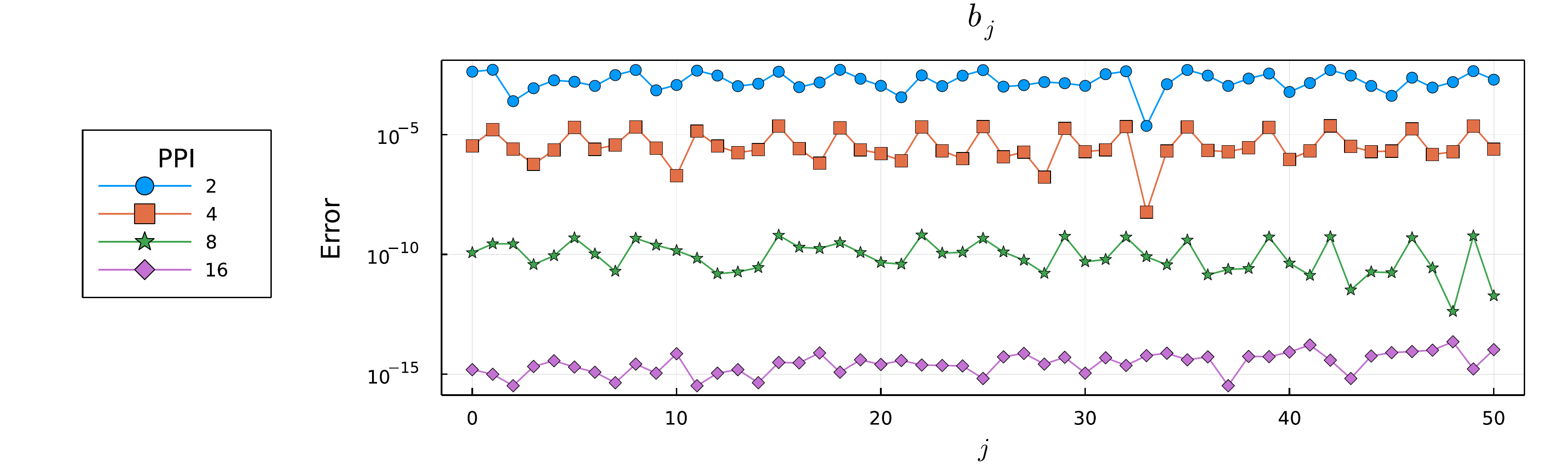}
	\end{subfigure}
	\caption{Error in the computed first 51 orthogonal polynomial coefficients for a Chebyshev-$V$-like weight on $[0.1, 1.1]\cup[2, 3]\cup[3.5, 4]$ for 2, 4, 8, and 16 collocation points per interval (PPI). Computing these coefficients requires 108.53 seconds via the Householder method described in Section \ref{singint}, and 0.13, 0.25, 0.57, and 2.78 seconds, respectively, via our method.}
	\label{3interr}
\end{figure}
\begin{figure}
	\centering
	\begin{subfigure}{0.95\linewidth}
		\centering
		\includegraphics[width=\linewidth]{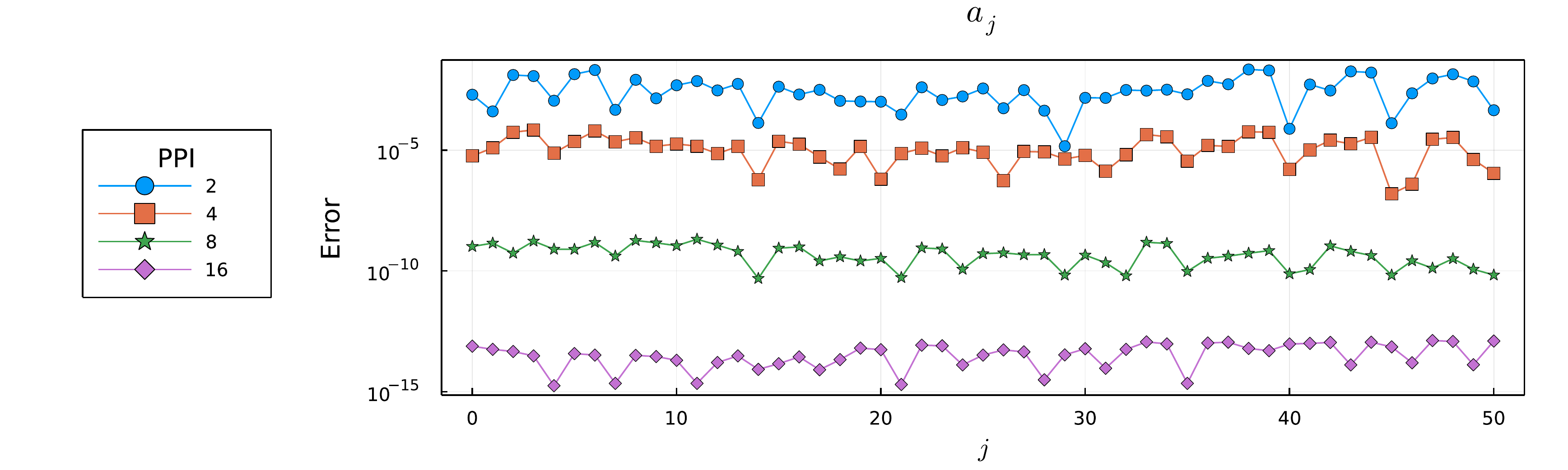}
	\end{subfigure}
	\begin{subfigure}{0.95\linewidth}
		\centering
		\includegraphics[width=\linewidth]{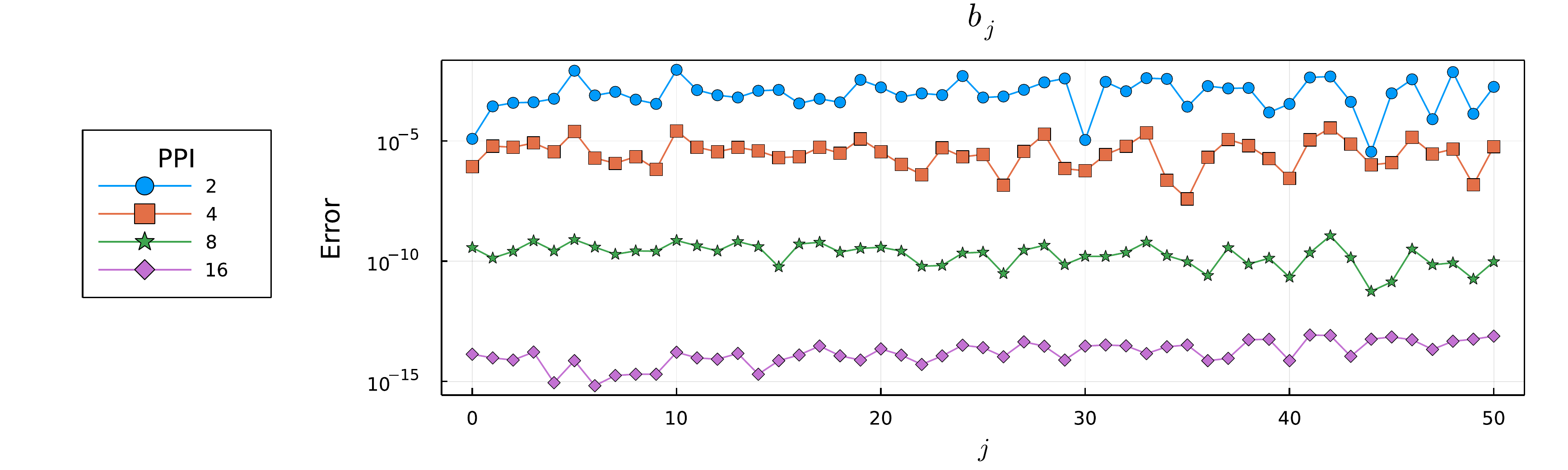}
	\end{subfigure}
	\caption{Error in the computed first 51 orthogonal polynomial coefficients for a Chebyshev-$\TT$-like weight on $[-3.2, -2.2]$, a Chebyshev-$U$-like weight on $[0.1, 1.1]$, a Chebyshev-$V$-like weight on $[2,3]$, and a Chebyshev-$W$-like weight on $[3.5,4]$ for 2, 4, 8, and 16 collocation points per interval (PPI). This weight function is plotted in Figure \ref{weightfunc}. Computing these coefficients requires 224.85 seconds the Householder method described in Section \ref{singint}, and 0.15, 0.19, 0.99, and 5.96 seconds, respectively, via our method.}
	\label{4interr}
\end{figure}

\subsection{The Toda lattice}
The semi-infinite Toda lattice as considered by Deift, Li, and Tomei \cite{DEIFT1985358} (see also \cite{MOSER1975197}) with boundary condition $b_{-1}(t)=0$ is defined by semi-infinite operators 
\begin{equation*}
\bX(t)=\begin{pmatrix}
	a_0(t) &b_0(t)\\
	b_0(t) &a_1(t) &b_1(t)\\
	&b_1(t) &a_2(t) &b_2(t)\\
	&&\ddots &\ddots &\ddots
\end{pmatrix},\quad \bB(t)=\begin{pmatrix}
0 &b_0(t)\\
-b_0(t) &0 &b_1(t)\\
&-b_1(t) &0 &b_2(t)\\
&&\ddots &\ddots &\ddots
\end{pmatrix},
\end{equation*}
using the relation 
\begin{equation*}
\dot\bX(t)=[\bB(t),\bX(t)],
\end{equation*}
where $[\cdot,\cdot]$ denotes the standard matrix commutator. Given an initial condition $\bX(0)=\bJ(w)$
corresponding to the Jacobi operator to some orthogonal polynomial weight $w$, the time-dependent $a_j(t),b_j(t)$ are given by the respective recurrence coefficients $a_j,b_j$ of the orthogonal polynomials with weight $\ex^{tx}w(x)$ \cite{DEIFT1985358} (see also \cite{Sogo1993}), i.e., if $\bX(0)=\bJ(w)$, then $\bX(t)=\bJ(w\ex^{t\diamond})$.

Since our method allows for weights to be scaled by positive analytic functions, we are able to numerically evolve the Toda lattice for initial conditions corresponding to the various orthogonal polynomial weights in our arsenal; however, the scaling of $\ex^{tx}$ grows exponentially in the complex plane, so we are only able to evolve the Toda lattice to a finite time horizon before encountering precision issues.

As a first example, we consider an initial condition of 
\begin{equation*}
\bX(0)=\begin{pmatrix}
	0 &1/2\\
	1/2 &0 &1/2\\
	&1/2 &0 &1/2\\
	&&\ddots &\ddots &\ddots
\end{pmatrix},
\end{equation*}
corresponding to a Chebyshev-$U$ weight on $[-1,1]$. Choosing to use 120 collocation points on $C_1$ and 20 on $[-1,1]$, we compute the first 11 recurrence coefficients with $h_1(x)=\ex^{tx}$ for times ranging from $t=0$ to $t=11.5$ and plot the results in Figure \ref{toda1int}. Numerical blowup is observed shortly after time $t=11.5$.
\begin{figure}
	\centering
	\begin{subfigure}{0.95\linewidth}
		\centering
		\includegraphics[width=\linewidth]{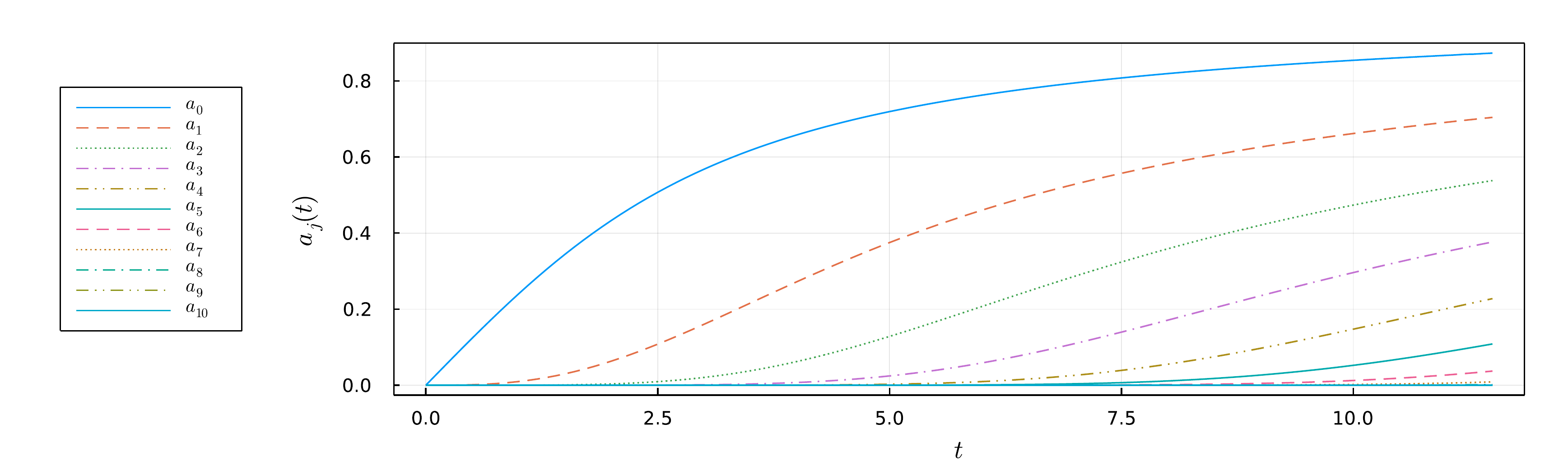}
	\end{subfigure}
	\begin{subfigure}{0.95\linewidth}
		\centering
		\includegraphics[width=\linewidth]{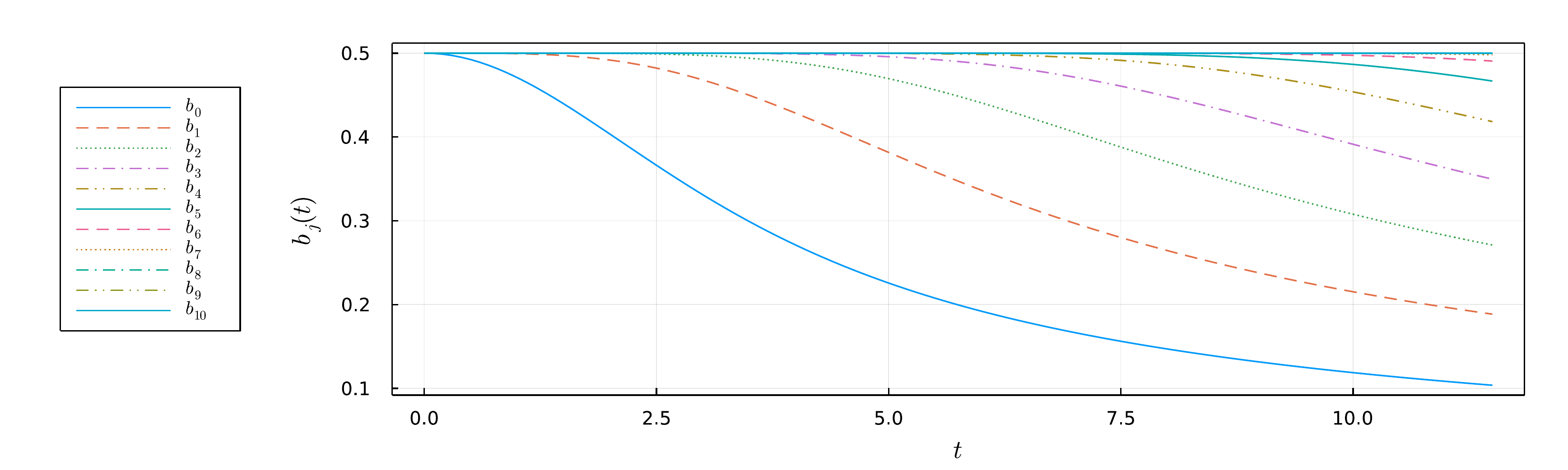}
	\end{subfigure}
	\caption{The evolution of the first 11 tridiagonal elements of the solution of the Toda lattice $\bX(t)$ with initial condition corresponding to a Chebyshev-$U$ weight on $[-1,1]$ from time $t=0$ to time $t=11.5$.}
	\label{toda1int}
\end{figure}

Now, we compute the time-evolution of the Toda lattice with initial condition $\bX(0)$ corresponding to the recurrence coefficients of orthogonal polynomials with a Chebyshev-$\TT$-like weight on $[-3,-2]\cup[2,3]$. Numerically, we observe that $a_j(0)=0$ for all $j$ and as $j\to\infty$, $b_j(0)$ alternates between $0.5$ and $2.5$ as displayed in Figure \ref{2int0}.
\begin{figure}
	\centering
	\begin{subfigure}{0.495\linewidth}
		\centering
		\includegraphics[width=\linewidth]{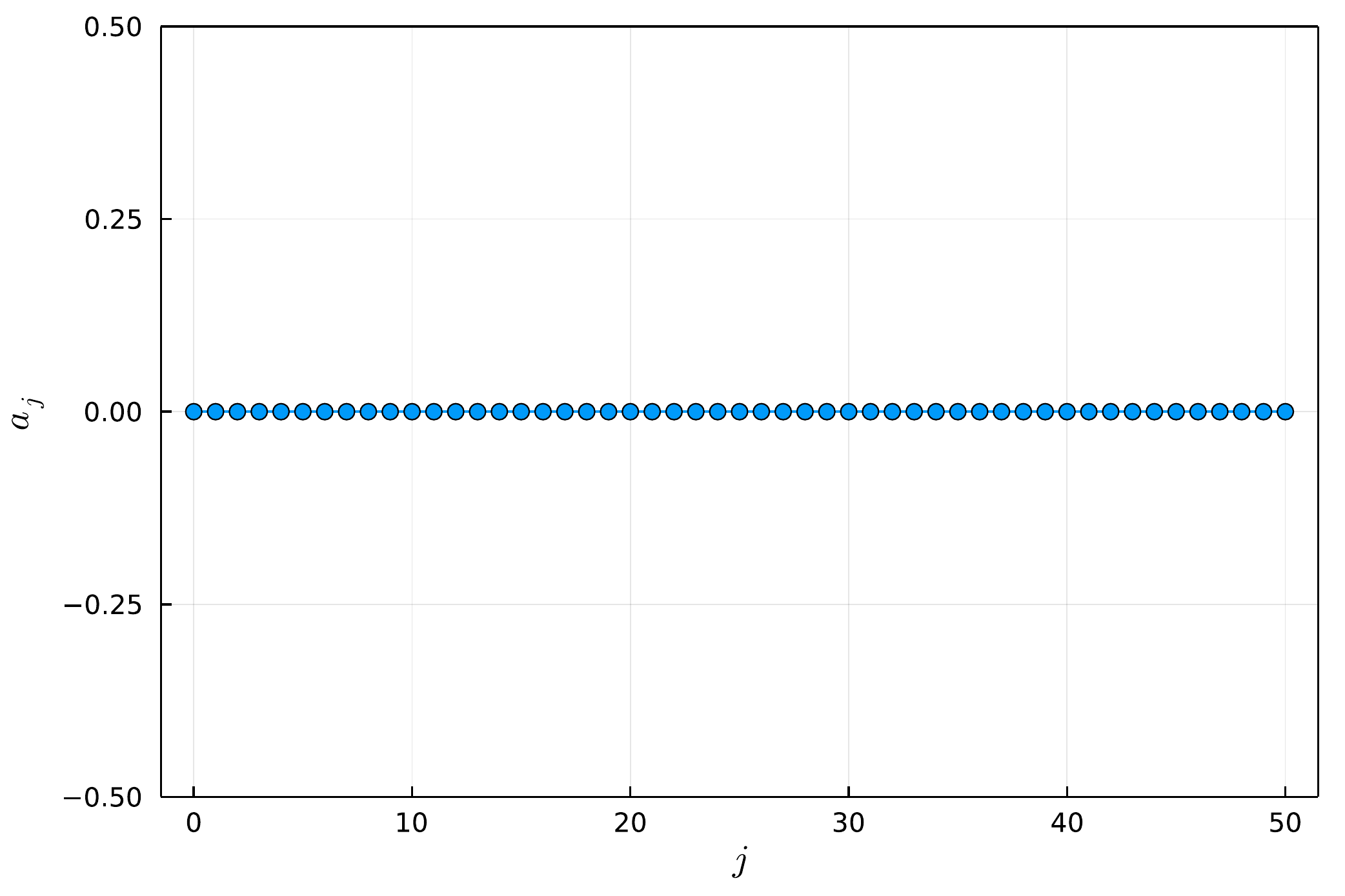}
	\end{subfigure}
	\begin{subfigure}{0.495\linewidth}
		\centering
		\includegraphics[width=\linewidth]{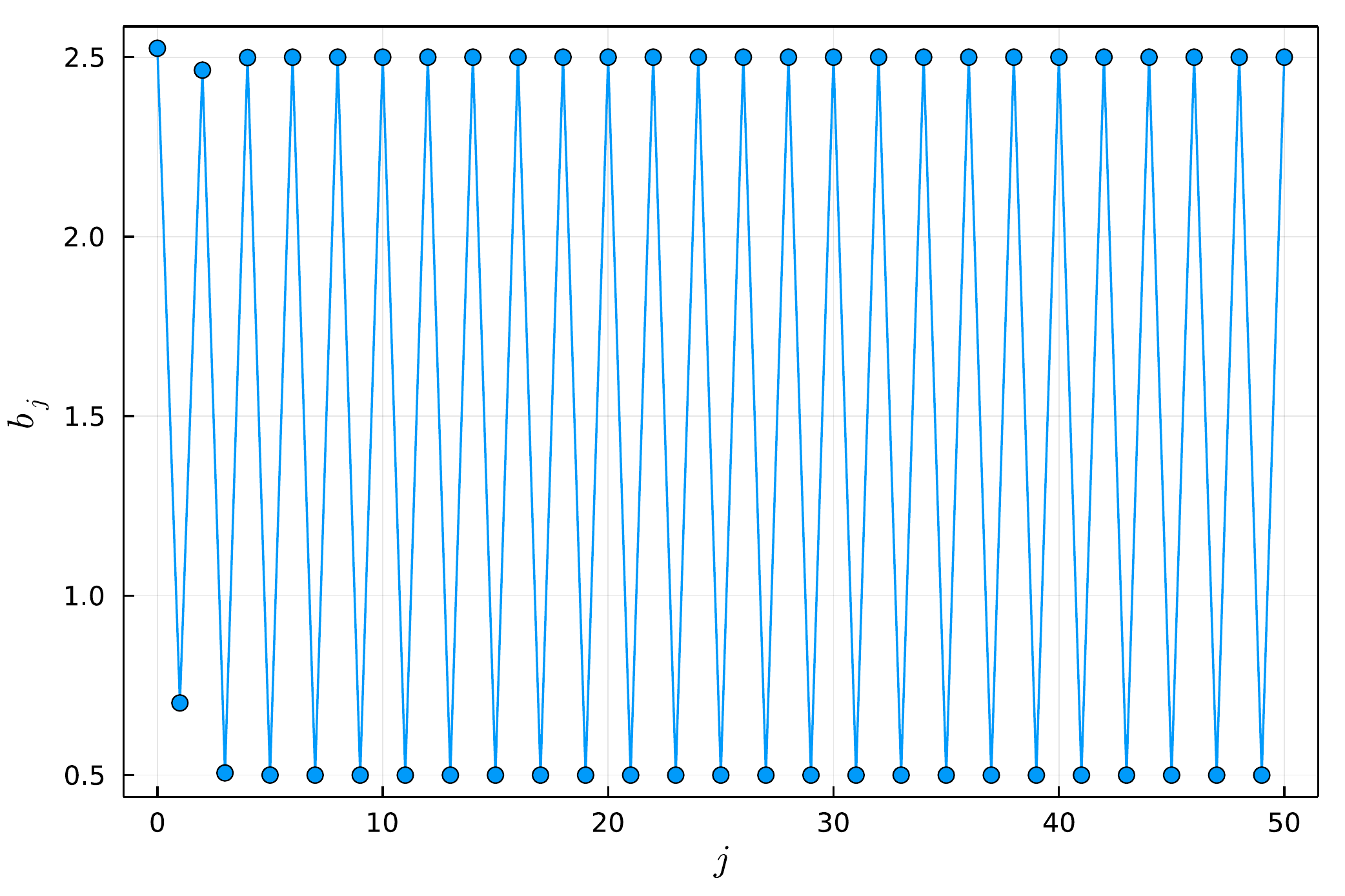}
	\end{subfigure}
	\caption{The first 51 recurrence coefficients for a Chebyshev-$\TT$-like weight on $[-3,-2]\cup[2,3]$ computed using 120 collocation points on each $C_j$ and 20 on each interval.}
	\label{2int0}
\end{figure}

We evolve the Toda lattice numerically for times ranging from $t=0$ to $t=4.5$ in the same manner as before\footnote{By taking $h_j(x)=\ex^{tx}$ for each $j$ and computing the first 11 recurrence coefficients at each time with 120 collocation points on each $C_j$ and 20 on each interval, we approximate the evolution of the Toda lattice.} and plot the results in Figure \ref{toda2int}.
\begin{figure}
	\centering
	\begin{subfigure}{0.95\linewidth}
		\centering
		\includegraphics[width=\linewidth]{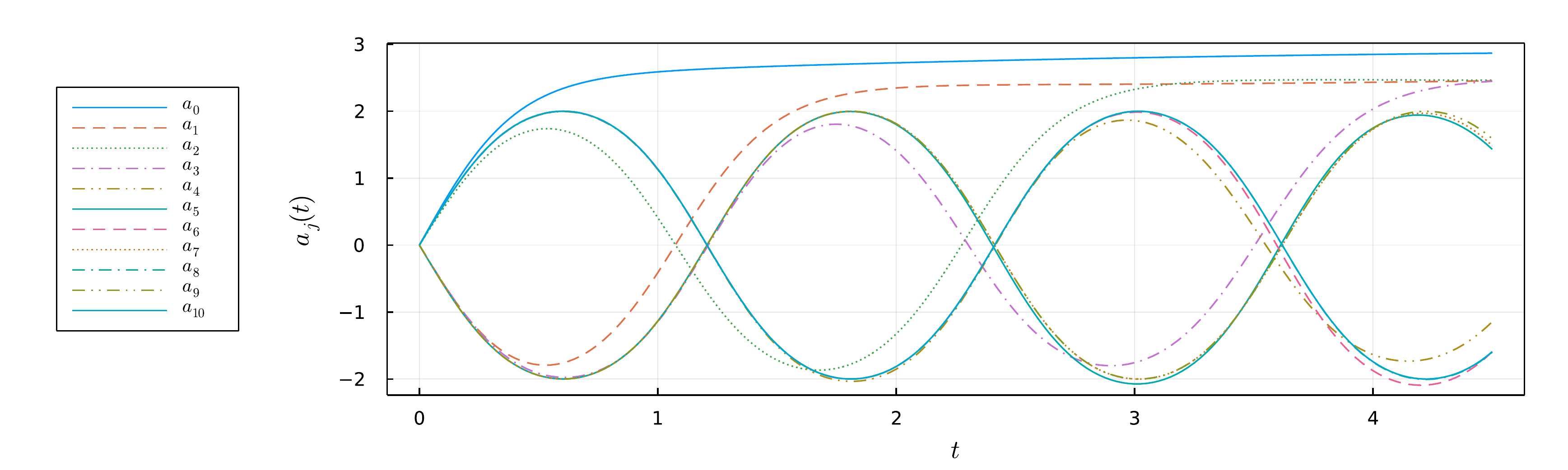}
	\end{subfigure}
	\begin{subfigure}{0.95\linewidth}
		\centering
		\includegraphics[width=\linewidth]{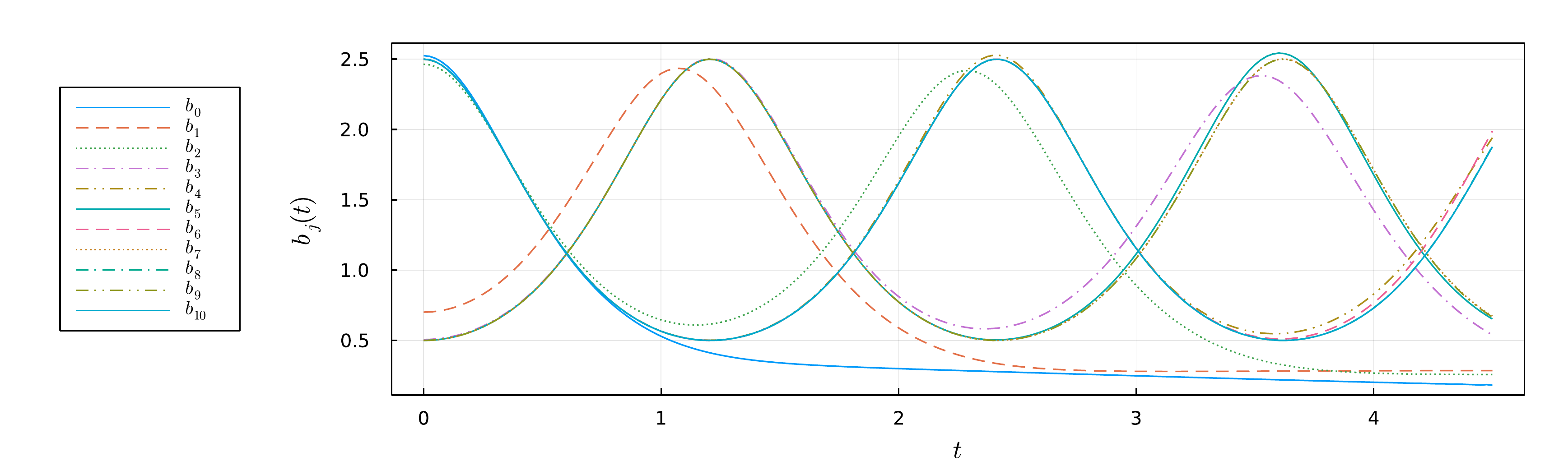}
	\end{subfigure}
	\caption{The evolution of the first 11 tridiagonal elements of the solution of the Toda lattice $\bX(t)$ with initial condition corresponding to a Chebyshev-$\TT$-like weight on $[-3,-2]\cup[2,3]$ from time $t=0$ to time $t=4.5$.}
	\label{toda2int}
\end{figure}

Additionally, we emphasize that our method allows us to compute any recurrence coefficient without first computing the others, meaning that the evolution of $a_j(t),b_j(t)$ can be computed for any $j$. We demonstrate this by plotting $a_{116}(t),b_{116}(t)$ in Figure \ref{toda116}.
\begin{figure}
	\centering
	\begin{subfigure}{0.495\linewidth}
		\centering
		\includegraphics[width=\linewidth]{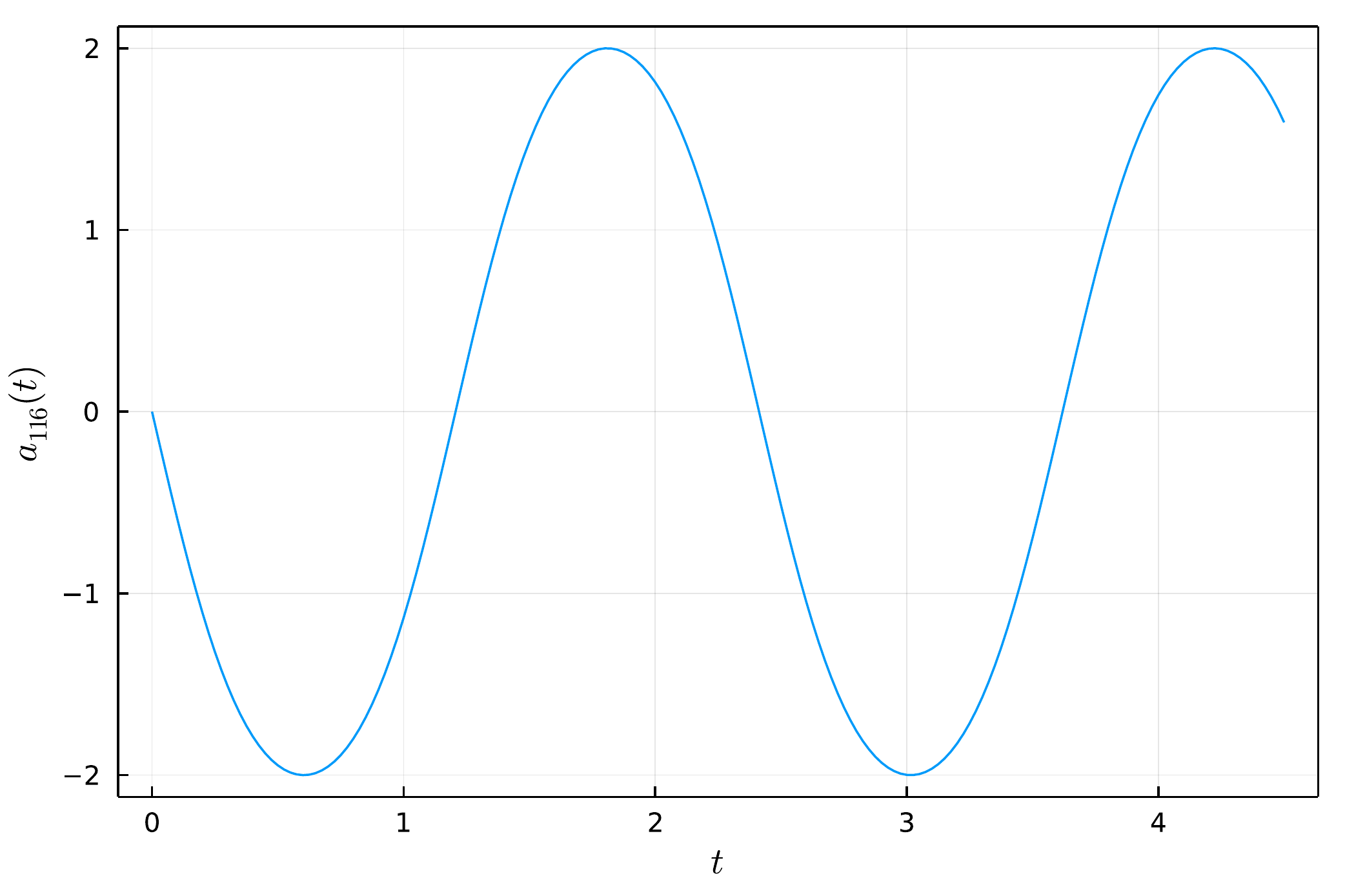}
	\end{subfigure}
	\begin{subfigure}{0.495\linewidth}
		\centering
		\includegraphics[width=\linewidth]{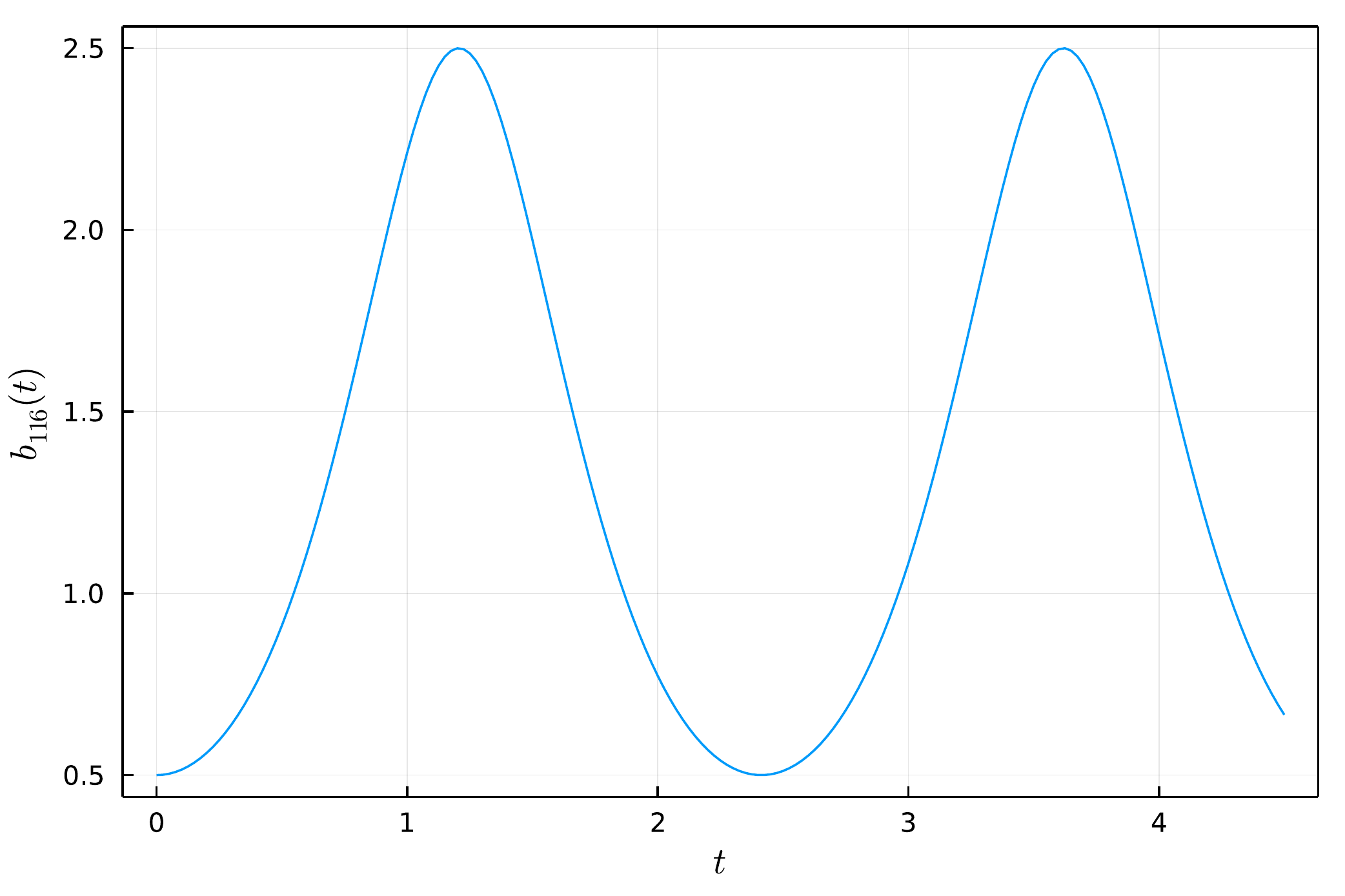}
	\end{subfigure}
	\caption{The evolution of $a_{116}(t),b_{116}(t)$ in the solution of the Toda lattice $\bX(t)$ with initial condition corresponding to a Chebyshev-$\TT$-like weight on $[-3,-2]\cup[2,3]$ from time $t=0$ to time $t=4.5$.}
	\label{toda116}
\end{figure}

Finally, we consider the time-evolution of the Toda lattice with initial condition $\bX(0)$ corresponding to the recurrence coefficients of orthogonal polynomials with a Chebyshev-$W$-like weight on $[0.1, 1.1]\cup[2, 3]\cup[3.5, 4]$. Now, the initial condition as displayed as in Figure \ref{3int0} appears almost noisy. 
\begin{figure}
	\centering
	\begin{subfigure}{0.495\linewidth}
		\centering
		\includegraphics[width=\linewidth]{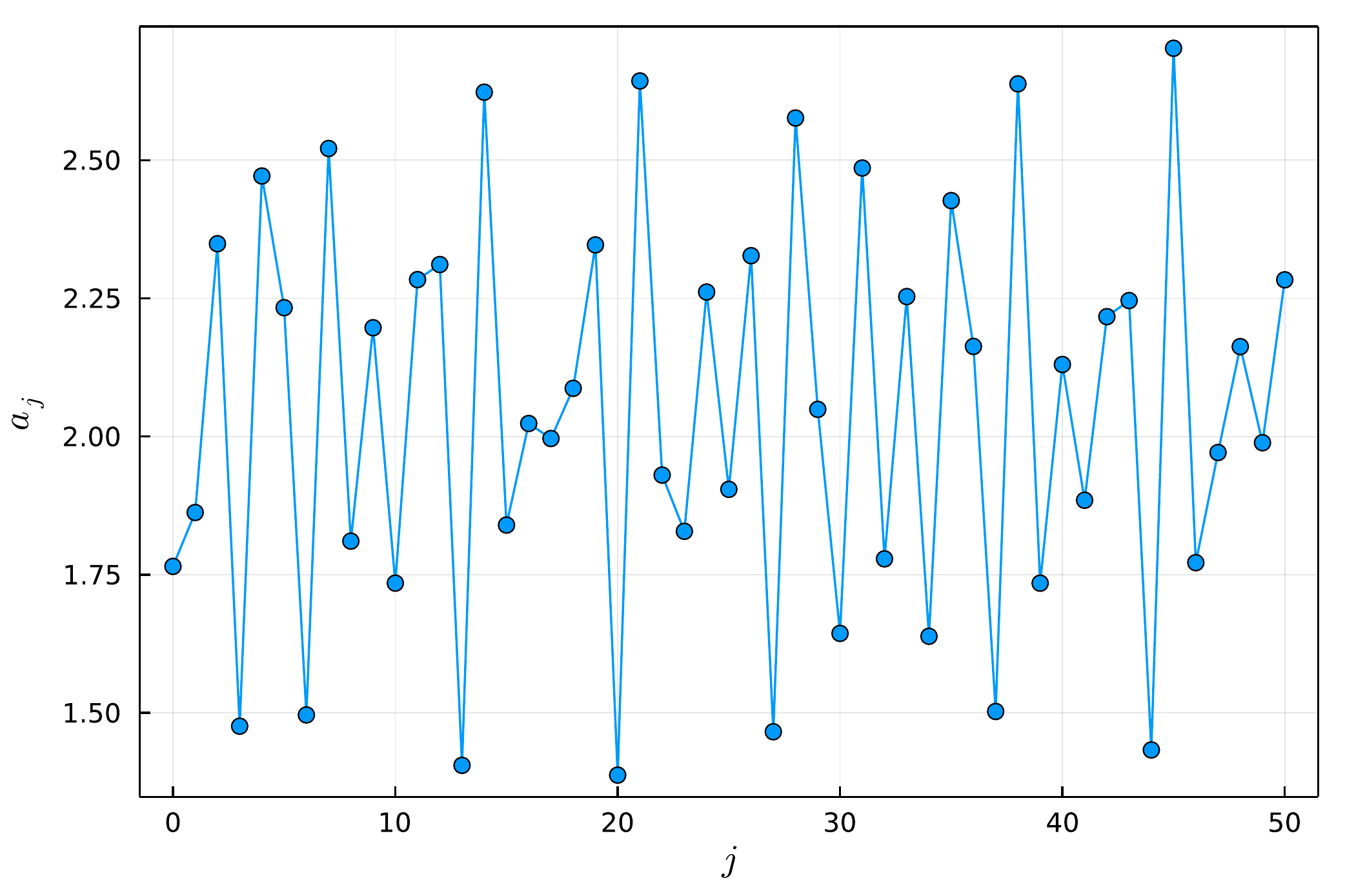}
	\end{subfigure}
	\begin{subfigure}{0.495\linewidth}
		\centering
		\includegraphics[width=\linewidth]{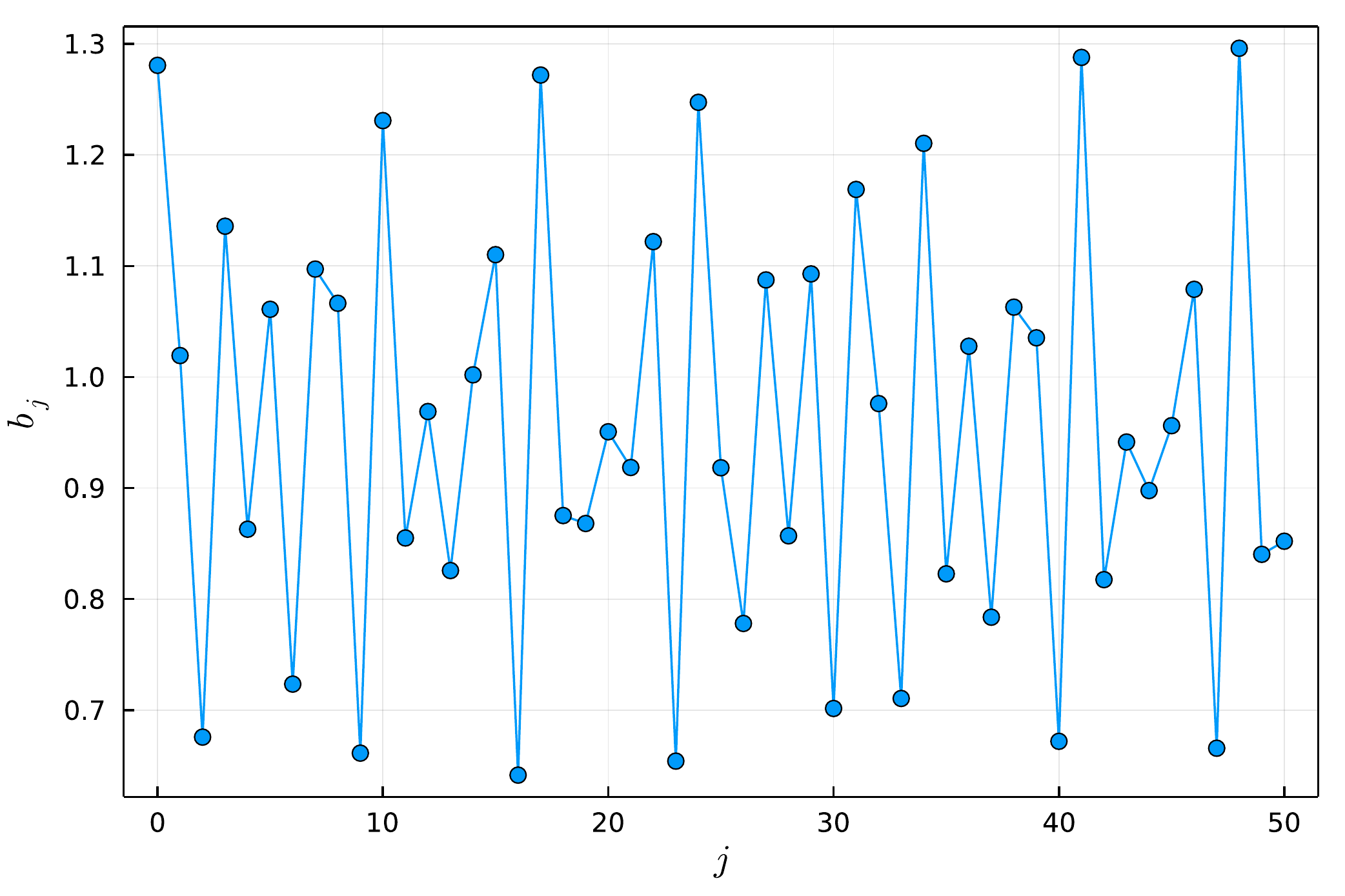}
	\end{subfigure}
	\caption{The first 51 recurrence coefficients for a Chebyshev-$W$-like weight on $[0.1, 1.1]\cup[2, 3]\cup[3.5, 4]$ computed using 120 collocation points on each $C_j$ and 20 on each interval.}
	\label{3int0}
\end{figure}
We evolve the Toda lattice numerically for times ranging from $t=0$ to $t=5.7$ in the same manner as before and plot the results in Figure \ref{toda3int}. 
\begin{figure}
	\centering
	\begin{subfigure}{0.95\linewidth}
		\centering
		\includegraphics[width=\linewidth]{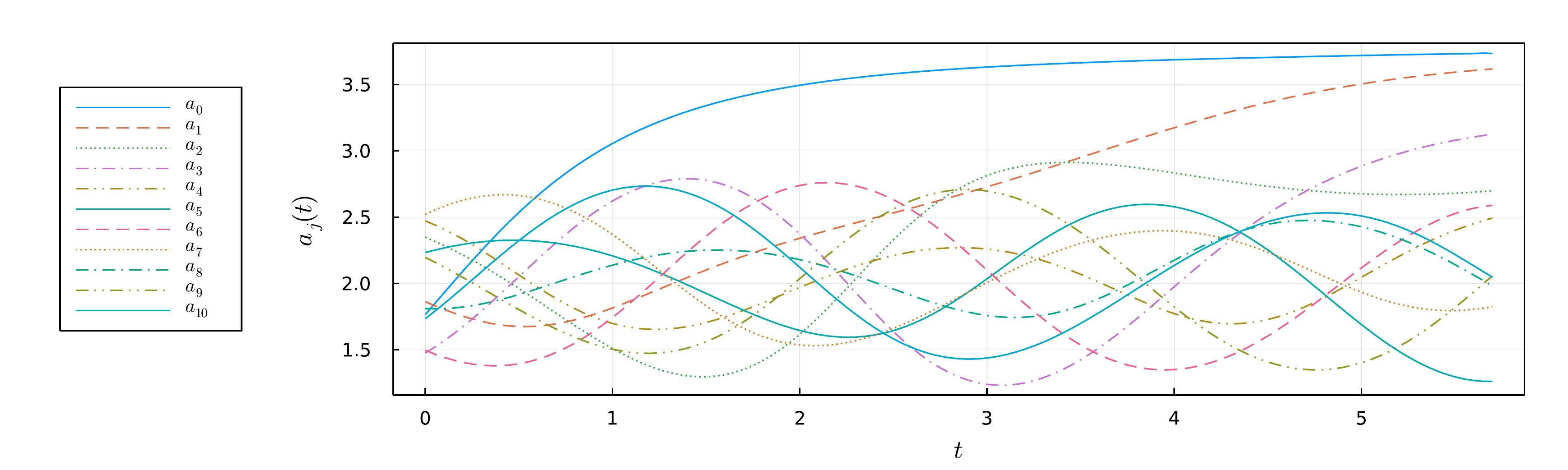}
	\end{subfigure}
	\begin{subfigure}{0.95\linewidth}
		\centering
		\includegraphics[width=\linewidth]{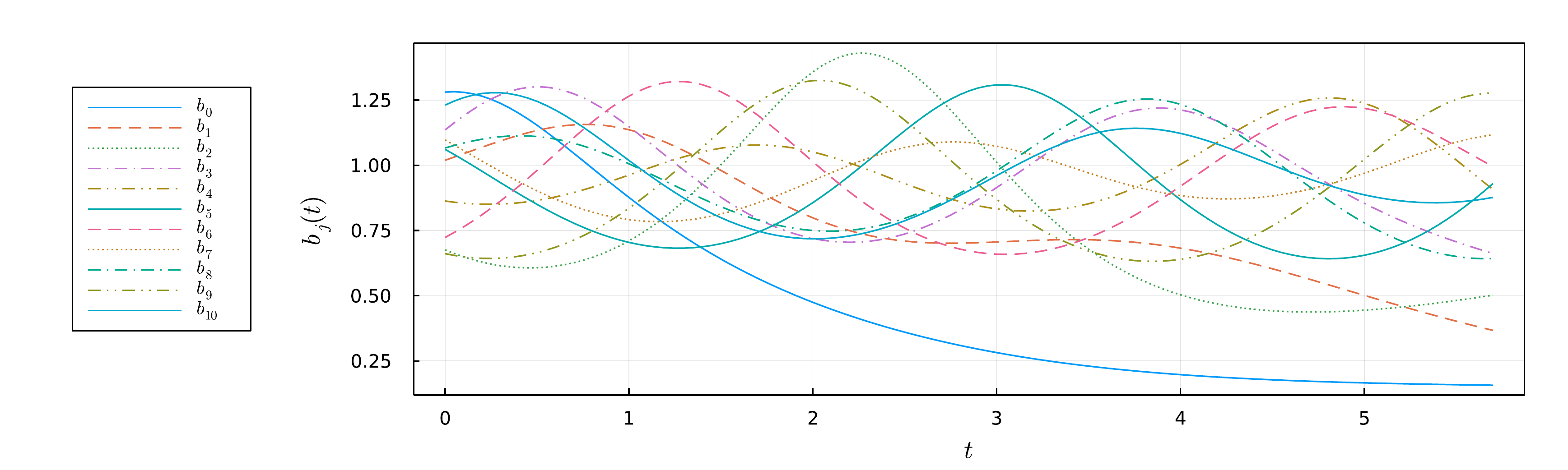}
	\end{subfigure}
	\caption{The evolution of the first 11 tridiagonal elements of the solution of the Toda lattice $\bX(t)$ with initial condition corresponding to a Chebyshev-$W$-like weight on $[0.1, 1.1]\cup[2, 3]\cup[3.5, 4]$ from time $t=0$ to time $t=5.7$.}
	\label{toda3int}
\end{figure}
\subsection{Building an approximation to $1/x$}
As a final example, we demonstrate that our procedure can be used to approximate functions on disconnected domains. Specifically, we consider an approximation to the function $1/x$. If the orthonormal polynomials corresponding to the normalization of the weight \eqref{weight} are given by $p_0,p_1,\ldots$, then a $p_j$-series expansion for $1/x$, for $x \in \Sigma$, is given by
\begin{equation}\label{1/xform}
\begin{aligned}
\frac{1}{x}&=\sum_{j=0}^{\infty}\frac{1}{\eta}\left\langle p_j,\frac{1}{\diamond}\right\rangle p_j(x)=\sum_{j=0}^{\infty}\frac{1}{\eta}\left(\int_\Sigma\frac{p_j(z)}{z}w(z)\df z \right)p_j(x)=
\sum_{j=0}^{\infty}\frac{2\pi\im}{\eta}\mathcal{C}_\Sigma\left[p_jw\right](0)p_j(x),
\end{aligned}
\end{equation}
where $\Sigma=\bigcup_{j=1}^{g+1}[a_j,b_j]$,  and $\eta=\int_\Sigma w(x)\df x$. Allowing for unnormalized weight functions mandates the inclusion of the $\frac{1}{\eta}$ constant that would not be present if $w$ was a probability measure or if $p_0,p_1,\ldots$ corresponded to the unnormalized weight. Our definition of $p_0,p_1,\ldots$ allows for the Cauchy integrals of these polynomials to be computed without also computing $\eta$. Thus, we need only to evaluate the Cauchy integral of our polynomials at zero to build an approximation. To do this, we recall that the (1,2) entry of the true solution \eqref{exactsol} to the original Riemann--Hilbert problem \eqref{RHPOG} is the Cauchy integral of the monic orthogonal polynomial, meaning that 
\begin{equation*}
\mathcal{C}_\Sigma\left[p_nw\right](z)=\gamma_n\left(\bY_n(z)\right)_{12},
\end{equation*}
where $\gamma_n$ is the coefficient on the $x^n$ term in $p_n(x)$. \cite{deift_2000} gives that
\begin{equation*}
	\gamma_n=\prod_{j=0}^{n-1}b_j^{-1},
\end{equation*}
with $\gamma_0=1$, so this term can be easily computed if we are already computing the recurrence coefficients. To obtain this from a computed solution $\Tilde \bS_n(z)$, we undo our series of transformations. If we assume that $z\notin\Sigma$ and furthermore that $z\notin \bar D_j$ where $D_j$ is the interior of $C_j$ for any $j$, then we need not worry about undoing the lensing step. In fact, the second column of $\bY_n$ is unaffected by the lensing step, so this need not be accounted for when computing $\mathcal{C}_\Sigma\left[p_nw\right](z)$, regardless of whether $z\in \bar D_j$. Since the remaining steps are diagonal, we can compute
\begin{equation}\label{cauchyints}
\mathcal{C}_\Sigma\left[p_nw\right](z)=\left(\prod_{j=0}^{n-1}\frac{1}{b_j\mathfrak{c}}\right)\left(\Tilde \bS_n(z)\right)_{12}\ex^{\mathfrak{h}_n(z)-n\mathfrak{g}(z)}.
\end{equation}
For our purposes, we only need to evaluate this at $z=0$, so this can easily be done in parallel with computing the recurrence coefficients by computing $\mathfrak{g}(0)$ once at the beginning and computing $\mathfrak{h}_n(0)$ each time we solve the Riemann--Hilbert problem \eqref{RHPfinal}.

Using \eqref{1/xform} and \eqref{cauchyints}, we build an approximation to $1/x$ on $\Sigma=[-4, -3]\cup[-2, -1]\cup[2, 3]$ as well as $\Sigma=[1, 2.3]\cup[3, 4]\cup[4.5, 6.1]$ where each Riemann--Hilbert problem is numerically solved with 120 collocation points on each $C_j$ and 20 points on each interval. To check accuracy, we consider an evenly spaced grid by 0.01 $x_1,x_2,\ldots$ on each domain $\Sigma$ and consider the max-norm error 
\begin{equation*}
\max_k\left|
\sum_{j=0}^{N-1}\frac{2\pi\im}{\eta}\mathcal{C}_\Sigma\left[p_jw\right](0)p_j(x_k)-\frac{1}{x_k}\right|,
\end{equation*}
where $N$ is the number of polynomials that we use. In Figure \ref{1/xapprox}, we plot this computed error for various numbers of terms in the expansion \eqref{1/xform}.
\begin{figure}
	\centering
	\begin{subfigure}{0.495\linewidth}
		\centering
		\includegraphics[width=\linewidth]{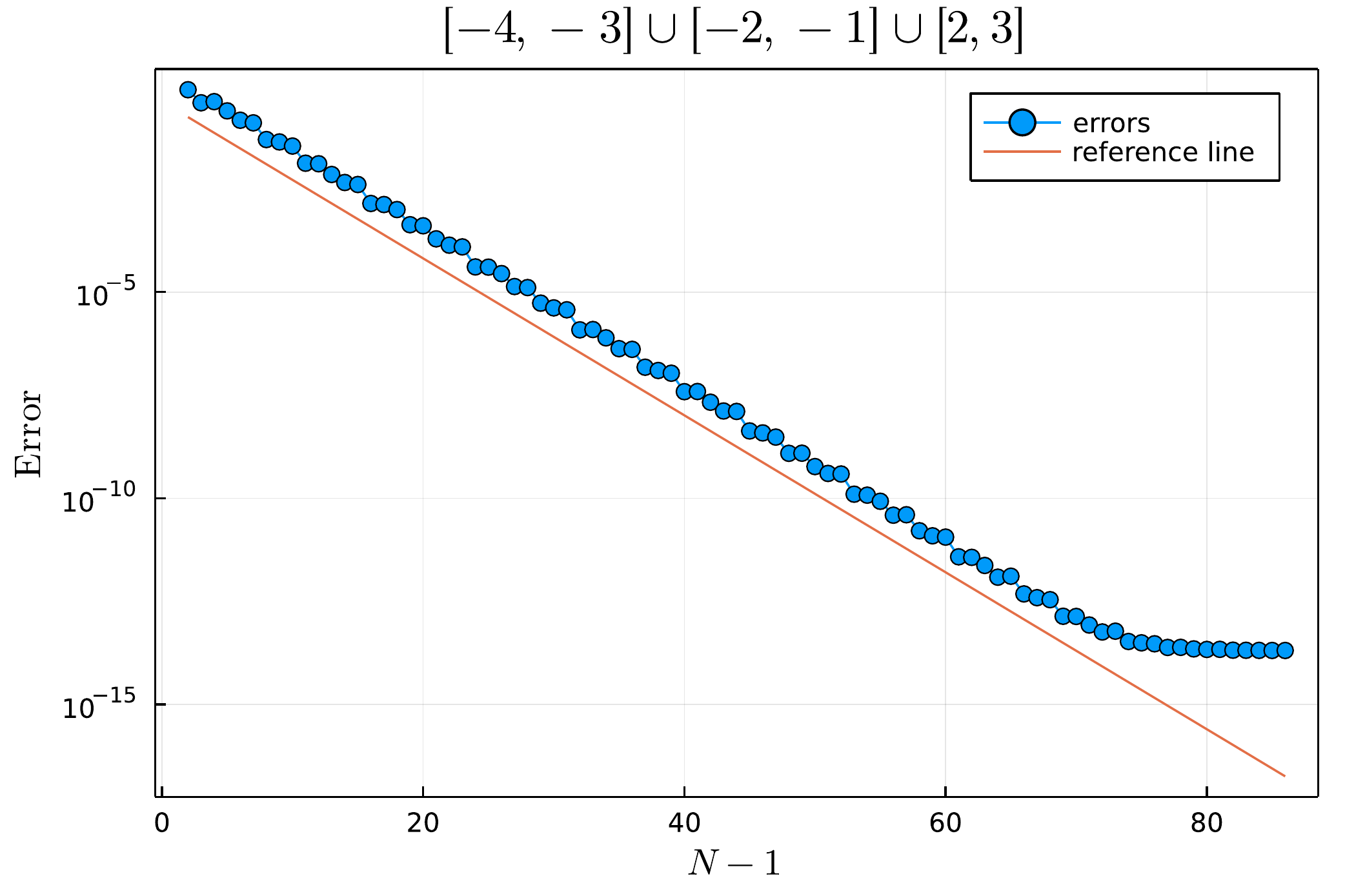}
	\end{subfigure}
	\begin{subfigure}{0.495\linewidth}
		\centering
		\includegraphics[width=\linewidth]{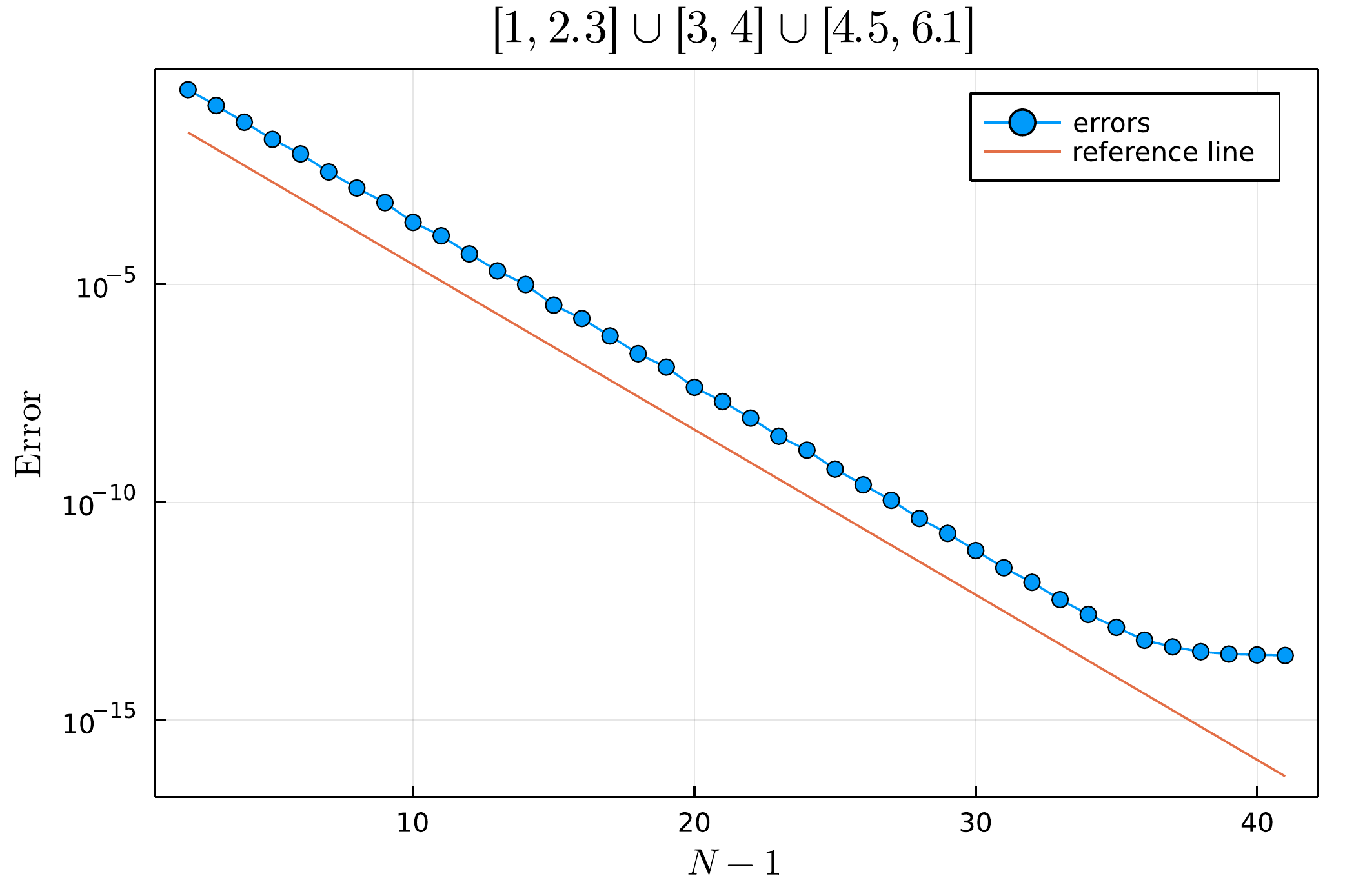}
	\end{subfigure}
	\caption{The max norm error in a Chebyshev-$\TT$-like expansion to approximate $1/x$ on $\Sigma=[-4, -3]\cup[-2, -1]\cup[2, 3]$ (left) and $\Sigma=[1, 2.3]\cup[3, 4]\cup[4.5, 6.1]$ (right) where $N$ denotes the number of polynomials used. Much faster convergence is observed when $0$ is not contained in the gaps between intervals. The rate of convergence is well-approximated by the reference line $\left|e^{-\mathfrak{g}(0)N}\right|$.}
	\label{1/xapprox}
\end{figure}

It is also possible for us to determine the rate of convergence for our orthogonal polynomial approximation. From \cite{Ding2022}, we have that if $0$ is outside the deformation region of our Riemann--Hilbert problem,
\begin{equation*}
2\pi\im\mathcal{C}_\Sigma\left[p_nw\right](0)=\delta_n\ex^{-n\mathfrak{g}(0)},
\end{equation*}
where $\delta_n$ depends on $\prod_{j=0}^{n-1}\frac{1}{b_j\mathfrak{c}}$,and other constants determined by the asymptotic behavior of $\bY_n$ at infinity \cite{Ding2022}. By \cite[Equations B.3, B.12]{Ding2022}, 
\begin{equation*}
\mathfrak{c}^{n}\prod_{j=0}^{n-1}b_j=\OO(1),
\end{equation*}
as $n\to\infty$. This implies that $\delta_n$ is bounded from above, so we have that
\begin{equation*}
\left\|
\sum_{j=0}^{N-1}\frac{2\pi\im}{\eta}\mathcal{C}_\Sigma\left[p_jw\right](0)p_j-\frac{1}{\diamond}\right\|^2_{L_w^2(\Sigma)}=c_1\sum_{j=N}^\infty\delta_j^2\ex^{-2j\re \mathfrak g(0)}\leq c\frac{\ex^{-2N\re \mathfrak{g}(0)}}{1-\ex^{-2\re \mathfrak{g}(0)}},
\end{equation*}
where $c$ and $c_1$ are constants. This implies that our approximation converges uniformly, at worst, at a geometric rate given by $
|\ex^{-\mathfrak{g}(0)}|$. We believe $\delta_n$ to also be bounded below which would imply that convergence is truly given by this rate. This means that in practice, one can compute $\mathfrak{g}(0)$ and the error at an initial approximation that will then allow them to estimate the number of terms necessary to reach a desired accuracy.

\section{Conclusion and future work}
This work presents a novel numerical method for computing orthogonal polynomials that are orthogonal on multiple, disjoint intervals, where analytical formulae are currently unavailable. By leveraging the Fokas--Its–-Kitaev Riemann–-Hilbert representation of these polynomials, our approach achieves $\OO(N)$ complexity for computing the first $N$ recurrence coefficients. Moreover, our method enables efficient pointwise evaluation of the polynomials and their Cauchy transforms across the entire complex plane.

The efficiency of our method is currently limited for low degree polynomials by the jump conditions on the circles $C_j$ around each interval; however, there exist functions $h_j$ such that the analytic continuations of $w_j, w_k$ satisfy $w_j(z) = w_k(z)$ for all $j,k$ and individual circles around each interval $[a_j,b_j]$ could be replaced with a large circle around $[a_1,b_{g+1}]$, and then removed entirely. In such a regime, our method is competitive (and better asymptotically) when compared with the $\OO(N^2)$ method presented in \cite{Gragg1984, gautschi}. In future work, we will explore these classes of weight functions and apply their respective orthogonal polynomials to indefinite linear systems and matrix function approximation in the vein of \cite{Saad}.

Finally, our approach potentially gives rise to new techniques for computing Riemann theta functions. The known asymptotic formulae for the recurrence coefficients of the orthogonal polynomials that we consider consist of the ratio of theta functions composed with the Abel map \cite{Ding2022, YATTSELEV201573}. In our method, the asymptotics of the recurrence coefficients are essentially governed by the auxiliary function $\mathfrak h_n$. Since we can efficiently evaluate $\mathfrak h_n$ in the complex plane, this could potentially enable the efficient evaluation of theta functions. We will also explore this further in future work.


\bibliographystyle{plain}
\bibliography{refs}
\end{document}